\newcounter{tempcolnum}
\newcommand{\multicolinterrupt}[1]{
\setcounter{tempcolnum}{\col@number}
\end{multicols}
#1%
\begin{multicols}{\value{tempcolnum}}
}
\theoremstyle{definition}
\newtheorem{definition}{Definition}[section]
\theoremstyle{theorem}
\newtheorem{theorem}{Theorem}[section]
\theoremstyle{proposition}
\newtheorem{proposition}{Proposition}[section]
\theoremstyle{lemma}
\newtheorem{lemma}{Lemma}[section]
\theoremstyle{corollary}
\newtheorem{corollary}{Corollary}[section]
\theoremstyle{remark}
\newtheorem{remark}{Remark}[section]
\newcommand{\HorRule}{\color{DarkGoldenrod} \rule{\linewidth}{1pt}} 
\title{\LARGE Knot polynomials of open and closed curves}
\author{Eleni Panagiotou$^{\#,*}$ and Louis H. Kauffman$^{\S}$ \\} 
\definecolor{issuePJA_color}{rgb}{1.0,0.0,0.0}
\definecolor{commentPJA_color}{rgb}{1.0,0.0,0.8}
\definecolor{commentEP_color}{rgb}{1.0,0.0,0.8}
\begin{document}

\maketitle 

\thispagestyle{fancy} 


\textbf{In this manuscript we introduce a method to measure entanglement of curves in 3-space that extends the notion of knot and link polynomials to open curves. We define the bracket polynomial of curves in 3-space and show that it has real coefficients and is a continuous function of the chain coordinates.  This is used to define the Jones polynomial in a way that it is applicable to both open and closed curves in 3-space. For open curves, it has real coefficients and it is a continuous function of the chain coordinates and as the endpoints of the curve tend to coincide, the Jones polynomial of the open curve tends to that of the resulting knot. For closed curves, it is a topological invariant, as the classical Jones polynomial. We show how these measures attain a simpler expression for polygonal chains and provide a finite form for their computation in the case of chains of 3 and 4 edges.}


\section{Introduction}


Open curves in space can entangle and even tie knots, a situation that arises in many physical systems of filaments, such as polymers, textiles, chemical compounds \cite{Arsuaga2005,Virnau2006,Taylor1974,Sumners1990,Sulkowska2012,Qin2011,Kroeger2005,Tzoumanekas2006,Everaers2004,Delgado2017,Liu2018}. In different contexts, entanglement of filaments affects material properties, function or other aspects related to fluid mechanics, biology, chemistry or engineering \cite{Edwards1967,deGennes1979,Foster2019,Sulkowska2012,Liu2018}. To measure entanglement of open curves it is natural to look for measures of complexity in the study of knots and links \cite{Kauffman2001}. Even though many strong and refined measures of topological complexity for knots and links have been created in the last century, such as knot and link polynomials \cite{Jones1985,Jones1987,Kauffman1987,Kauffman1990,Przytycki1987,Freyd1985}, the only one that is sensitive on the configurations of open curves is the Gauss linking integral (introduced in 1877) \cite{Gauss1877}. In this work we define knot and link polynomials of \textit{open chains in 3-space}. To do this, we combine ideas of the Gauss linking integral and the notion of knotoids (open chain \textit{diagrams} \cite{Turaev2012,Gugumcu2019,Gugumcu2017,Gugumcu2017b}).

 A knot is a simple closed curve in space. Similarly, a link is formed by many simple closed curves in space that do not intersect each other.  Two knots or links are equivalent if one can be continuously deformed to the other without allowing cutting and pasting. A topological invariant is a function over the space of knots or links that is invariant under such deformations \cite{Freyd1985,Przytycki1987,Kauffman1990}. When dealing with open chains, the above notion of topological equivalence is not useful, since any mathematical open curve can be deformed to another without cutting and pasting. In fact, one does not need a measure of complexity of open chains that is invariant under deformations, but rather a measure that varies continuously in the space of configurations. Such a measure is the Gauss linking integral. For two closed chains, the Gauss linking integral is an integer topological invariant that measures the algebraic number of times one chain turns around the other. For two open chains, it is a real number that is a continuous function of the chain coordinates. The Gauss linking integral has been very useful in measuring entanglement in physical systems of open or closed filaments \cite{Baiesi2017,Rogen2003,Panagiotou2019,Panagiotou2013b,Panagiotou2014,Panagiotou2015}. However, more refined measures of entanglement of one, two or more components, are needed. In this direction several approximation efforts have appeared, aiming at mapping an open chain to a knot type, or a knotoid type \cite{Sulkowska2012,Goundaroulis2017,Goundaroulis2017b}.

In this manuscript we introduce a new measure of entanglement of open chains in 3-space that is a well-defined function of the chain coordinates in 3-space that does not approximate an open chain by any particular closed chain or any particular projection of the open chain. Namely, we define the \textit{bracket polynomial of open curves in 3-space}, a polynomial with real coefficients which is a continuous function of the chain coordinates. This is used to define the \textit{Jones polynomial of open chains in 3-space}. The Jones polynomial of open 3-dimensional chains is a continuous function of the chains coordinates and, as the endpoints of the chains tend to coincide, it tends to the Jones polynomial of the resulting knot, a topological invariant of the knot. We stress that this is the first well defined new measure of entanglement of open chains that is a continuous measure of complexity of open curves since the Gauss linking integral and it is stronger than the Gauss linking integral. 

An important reason why the Gauss linking integral has been very useful in applications is that a finite form for its computation exists that avoids numerical integration \cite{Banchoff1976}. To this direction, in this manuscript we also provide a finite form for the  computation of the bracket and Jones polynomials in the case of a polygonal chain of 3 and 4 edges (open or closed). This is the base case upon which the general case of more edges will be studied in a sequel to this paper.

The manuscript is organized as follows: Section \ref{measures} discusses background information on measures of entanglement, Section \ref{bracketopen} gives the definition and properties of the bracket polynomial of open chains in 3-space and uses the bracket polynomial of open chains to define the Jones polynomial of open chains. We stress that, even though in this manuscript we focus on single open chains, all the definitions and properties of those described in Section \ref{bracketopen}  apply to a collection of open chains. Sections \ref{finite} and \ref{Jonesfinite} provide a finite formula for the computation of the bracket and  Jones  polynomials of polygonal chains of 3 and 4 edges.

\section{Measures of complexity of open chains and their projections}\label{measures}

In this Section, we provide background information that is necessary for the rest of the manuscript. More precisely, we discuss the Gauss linking integral, a measure of entanglement of both open and closed \textit{3-dimensional curves} and the bracket and Jones polynomial of knotoids, a measure of complexity of open knot \textit{diagrams} (projections of open 3-dimensional chains).

\subsection{The Gauss linking integral}

A measure of the degree to which polymer chains interwind and attain complex configurations is the Gauss linking integral:

\begin{definition}\label{lk} (Gauss Linking Number). The Gauss \textit{Linking Number} of two disjoint (closed or open) oriented curves  $l_1$ and $l_2$, whose arc-length parametrizations are $\gamma_1(t),\gamma_2(s)$ respectively, is defined as the following double integral over $l_1$ and $l_2$ \cite{Gauss1877}:

\begin{equation}\label{Gausslk}
L(l_1,l_2)=\frac{1}{4\pi}\int_{[0,1]}\int_{[0,1]}\frac{(\dot\gamma_1(t),\dot\gamma_2(s),\gamma_1(t)-\gamma_2(s))}{||\gamma_1(t)-\gamma_2(s)||^3}dt ds,
\end{equation}

\noindent where $(\dot\gamma_1(t),\dot\gamma_2(s),\gamma_1(t)-\gamma_2(s))$
is the \textit{scalar triple product} of $\dot\gamma_1(t),\dot\gamma_2(s)$ and
$\gamma_1(t)-\gamma_2(s)$.
\end{definition}

For closed chains, the Gauss linking integral is equal to the half algebraic sum of crossings of the two chains in any projection direction, it is an integer and a topological invariant of the link. 

For open chains, the Gauss linking integral is equal to the average of half the algebraic sum of crossings between the projections of the two chains over all possible projection directions. It is a real number and a continuous function of the chain coordinates. 

The Gauss linking integral can be applied over one curve, to measure its self-entanglement, called writhe, we denote $Wr$. By taking the absolute value of the integrand the writhe becomes the average crossing number, we denote $ACN$.

\subsubsection{Finite form of the Gauss linking integral}\label{Gaussfinite}

In \cite{Banchoff1976}, a finite form for the Gauss linking integral of two edges was introduced, which gives a finite form for the Gauss linking integral over one or two polygonal chains.

Let $E_n,R_m$ denote two polygonal chains of edges $e_i,i=1,\dotsc,n, r_j,j=1,\dotsc,m$, then 

\begin{equation}\label{Gausslk}
L(E_n,R_m)=\sum_{i=1}^n\sum_{j=1}^mL(e_i,r_j)
\end{equation}

\noindent where $L(e_i,r_j)$ is the Gauss linking integral of two edges. Let $e_i$ be the edge that connects the vertices $\vec{p}_i,\vec{p}_{i+1}$ and $r_j$ be the edge that connects the vertices $\vec{p}_j,\vec{p}_{j+1}$ (see Figure \ref{fig:banchoff} for an illustrative example). In \cite{Banchoff1976} it was shown that $L(e_i,r_j)=\frac{1}{4\pi}Area(Q_{i,j})$, where $Q_{ij}$ for $i<j$ denotes the quadrangle defined by the faces of the quadrilateral formed by the vertices $\vec{p}_i,\vec{p}_{i+1},\vec{p}_{j},\vec{p}_{j+1}$. This area can be computed by adding the dihedral angles of this quadrilateral. The faces of this quadrangle have normal vectors $\vec{n}_i,i=1,\dotsc 4$, defined as follows \cite{Klenin2000}:

\begin{figure}[h]
   \begin{center}
     \includegraphics[width=0.75\textwidth]{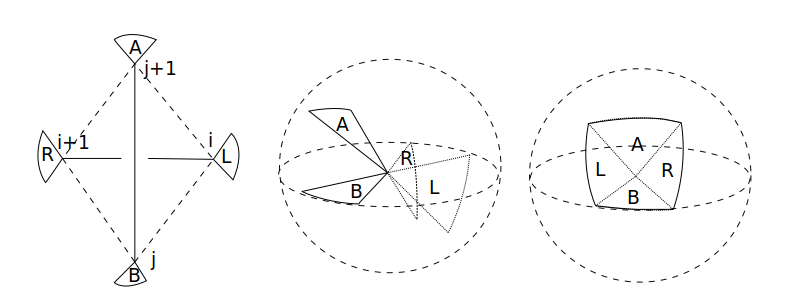}
      \end{center}
     \caption{The area of the quadrangle is bounded by the great circles with normal vectors $\vec{n}_1,\vec{n}_2,\vec{n}_3,\vec{n}_4$, determined by the faces of the quadrilateral. In fact, the quadrangle is formed by gluing together, with correct orientation the tiles $A,B,R,L$. The vectors vectors $\vec{n}_1,\vec{n}_2,\vec{n}_3,\vec{n}_4$ are perpendicular to the tiles $L,A,R$ and $B$ respectively, pointing outwards of the tetrahedron for $A,B$ and inwards for $L,R$. These tiles define a quadrangle with faces $A,L,B,R$ in the counterclockwise orientation, with all the normal vectors pointing outside the quadrangle. }
     \label{fig:banchoff}
\end{figure}

\begin{align}
&    \vec{n}_1=\frac{\vec{r}_{i,j}\times \vec{r}_{i,j+1}}{||\vec{r}_{ij}\times \vec{r}_{i,j+1}||}\nonumber,    \vec{n}_2=\frac{\vec{r}_{i,j+1}\times \vec{r}_{i+1,j+1}}{||\vec{r}_{i,j+1}\times \vec{r}_{i+1,j+1} ||}\nonumber   \vec{n}_3=\frac{\vec{r}_{i+1,j+1}\times \vec{r}_{i+1,j}}{|| \vec{r}_{i+1,j+1}\times \vec{r}_{i+1,j} ||}\nonumber,    \vec{n}_4=\frac{\vec{r}_{i+1,j}\times \vec{r}_{i,j}}{|| \vec{r}_{i+1,j}\times \vec{r}_{i,j}||}\nonumber
\end{align}

\noindent where $\vec{r}_{ij}=\vec{p}_i-\vec{p}_j$, $\vec{r}_{i,j+1}=\vec{p}_i-\vec{p}_{j+1}$, $\vec{r}_{i+1,j}=\vec{p}_{i+1}-\vec{p}_j$, $\vec{r}_{i+1,j+1}=\vec{p}_{i+1}-\vec{p}_{j+1}$.

The area of the quadrangle $Q_{ij}$ is: $Area(Q_{ij})=\arcsin(\vec{n}_1\cdot \vec{n}_2)+\arcsin(\vec{n}_2\cdot \vec{n}_3)+\arcsin(\vec{n}_3\cdot \vec{n}_4)+\arcsin(\vec{n}_4\cdot \vec{n}_1)$.

\subsection{The bracket polynomial of knotoids}\label{bracket}

The theory of knotoids was introduced by V.Turaev \cite{Turaev2012} in 2012 (see also \cite{Gugumcu2017}).  Knotoids are  open  ended  knot  diagrams (see Figure \ref{fig:knotoids}). Three  \textit{Reidemeister  moves} (see Figure \ref{fig:omega}),  are  defined  on knotoid  diagrams by  modifying  the  diagram within small surrounding disks that do  not  utilize  the  endpoints (forbidden moves shown in Figure 4).  
Two knotoid diagrams are said to be equivalent if they are related to each other by a finite sequence of such moves (and isotopy of $S^2$, $R^2$ for knotoid diagrams in $S^2$, $R^2$, respectively). 

\begin{figure}[h]
   \begin{center}
     \includegraphics[width=0.45\textwidth]{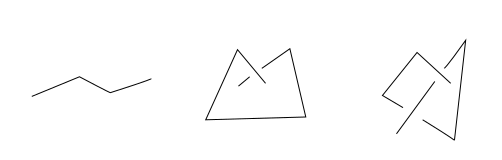}
      \end{center}
     \caption{ Examples of (polygonal) knotoids (open simple arc diagrams). Notice that knotoids refer to \textit{projections of open chains}, while knots refer to closed chains in 3-space.}
     \label{fig:knotoids}
\end{figure}

\begin{figure}[h]
   \begin{center}
     \includegraphics[width=0.8\textwidth]{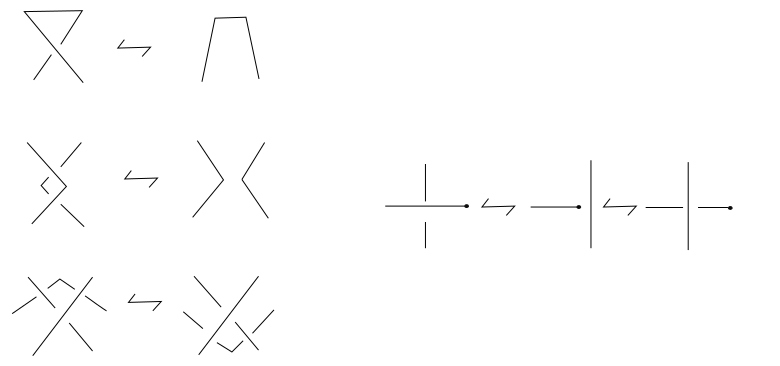}
      \end{center}
     \caption{Left: The Reidemeister moves for knotoids and Right: forbidden knotoid moves.}
     \label{fig:omega}
  
\end{figure}

The bracket polynomial of knotoids in $S^2$ or $R^2$  is defined by extending the state expansion of the bracket polynomial of knots. The following initial conditions and diagrammatic equations are sufficient for the skein computation of the bracket polynomial of classical knotoids:

\begin{equation}\label{skein}
\langle\raisebox{-10pt}{\includegraphics[width=.05\linewidth]{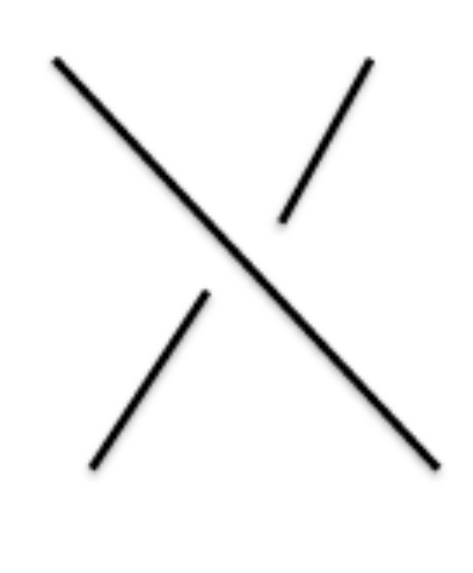}}\rangle=A\langle\raisebox{-10pt}{\includegraphics[width=.05\linewidth]{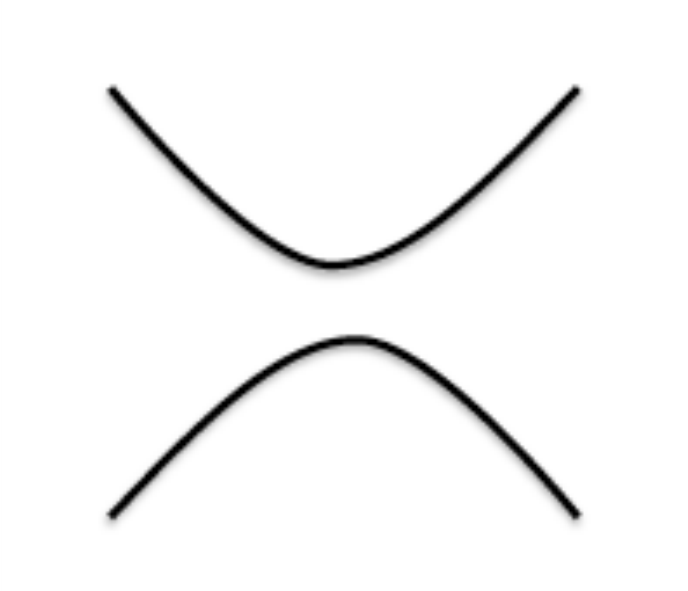}}\rangle+A^{-1}\langle\raisebox{-10pt}{\includegraphics[width=.05\linewidth]{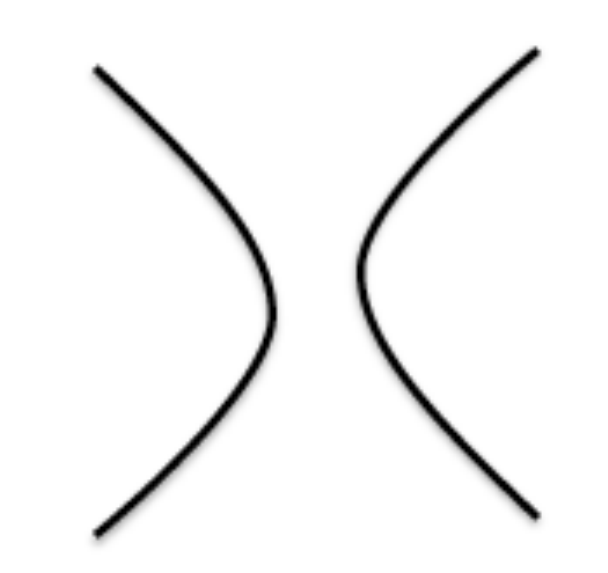}}\rangle,\hspace{0.5cm}\langle K\cup \bigcirc\rangle=(-A^2-A^{-2})\langle K\rangle,\hspace{0.5cm} \langle\includegraphics[width=.05\linewidth]{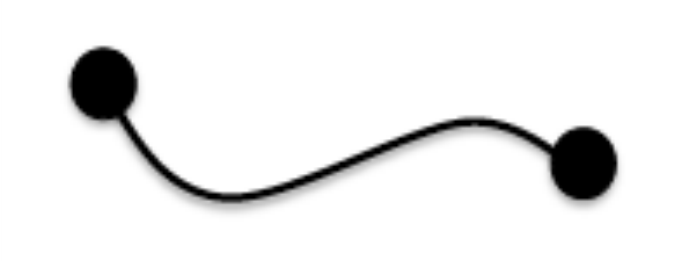}\rangle=1.
\end{equation}

\begin{definition}
A state of a diagram of a knotoid, $K$, consists in a choice of local state for each crossing of $K$. 
\end{definition}

\begin{definition}
The bracket polynomial of a knotoid diagram $K$ is defined as:

\begin{equation}\label{bracketformula}
\langle K\rangle =\sum_SA^{\sigma(S)}d^{||S||-1}
\end{equation}

\noindent where the sum is taken over all states, $\sigma(S)$ is the sum of the labels of the state $S$, $||S||$ is the number of components of $S$, and $d=(-A^2-A^{-2})$.
\end{definition}

\begin{remark}
The classical bracket polynomial of knots is defined using formula \ref{bracketformula}, with the same Skein relations as in Eq. \ref{skein}, except the last one, where an arc is replaced by a circle.
The classical bracket polynomial is not a topological invariant for knots (it is not invariant under the Reidemeister 1 move) and depends on the knot diagram used for its computation. Similarly, the bracket polynomial of knotoids is not invariant under $\Omega_1$ (the Reidemeister 1 move) and depends on the knotoid diagram.
\end{remark}

\subsubsection{The Jones polynomial of knotoids}

The Jones polynomial of knotoids is an invariant of knotoids and many component knotoids, called multiknotoids or linkoids,  (equivalent knotoids/linkoids map to the same polynomial) and can be defined using the normalized bracket polynomial. 
The normalized the bracket polynomial is defined as follows:

\begin{equation}
f_K=(-A^{-3})^{-wr(K)}\langle K\rangle
\end{equation}

\noindent where $wr(K)$ is the writhe of the knotoid diagram $K$. 

The normalized bracket polynomial of knotoids in $S^2$ generalizes the Jones polynomial of knotoids with the substitution $A=t^{-1/4}$.

\begin{remark} The same definition, where $K$ is a knot diagram, applies to simple closed curves to give the Jones polynomial of knots and links, a topological invariant of knots and links.
\end{remark}

\section{The bracket polynomial of a curve in 3-space}\label{bracketopen}

Consider an open or closed curve in 3-space (we will also call it chain).
A (generic) projection of a curve (fixed in 3-space) can give a different knotoid diagram (or knot diagram), depending on the choice of projection direction. We define the bracket polynomial of a 3-dimensional curve as the \textit{average} of the Kauffman bracket polynomial of a projection of the curve over all possible projection directions.  The definition is made precise as follows:

\begin{definition}
Let $l$ denote a curve in 3-space. Let $(l)_{\vec{\xi}}$ denote the projection of $l$ on a plane with normal vector $\xi$. The bracket polynomial of $l$ is defined as: 

\begin{equation}\label{avk}
\langle l\rangle=\frac{1}{4\pi}\int_{\vec{\xi}\in S^2}\langle K((l)_{\vec{\xi}})\rangle dS
\end{equation}

\noindent where the integral is over all vectors in $S^2$ except a set of measure zero (corresponding to non-generic projections).

\end{definition}

\begin{remark}
The same definition applies to define the bracket polynomial of many open and/or closed curves in space by replacing $l$ by a many component open/closed or mixed collection of open and closed curves. In this manuscript we focus on one component, but the same analysis holds for many chains in 3-space.
\end{remark}

\noindent\textit{Properties of the bracket polynomial of chains in 3-space}

\noindent (i) The bracket polynomial does not depend on any particular projection of the chain (open or closed).

\noindent(ii) For an open chain this polynomial is not the polynomial of a corresponding/approximating closed curve, nor that of a corresponding/approximating knotoid.

\noindent(iii) For both open and closed chains, the bracket polynomial has real coefficients.

\noindent(iv) The bracket polynomial defined in Eq. \ref{avk} is not a topological invariant, but it is a continuous function of the chain coordinates for both open and closed chains (see Corollary \ref{contcurve}).

In the following, we will show that the bracket polynomial of curves in 3-space attains a simpler expression for polygonal chains. However, similar arguments can be used to extend this simpler expression to any curve in 3-space (polygonal or not).

Let $EW_n$ denote the space of configurations of polygonal chains of $n$ edges. Let $E_n$ denote a polygonal chain of $n$ edges in 3-space. Then only a finite number of different knotoid (or knot) types can occur in any projection of $E_n$. Let $k(n)$ be the total number of knotoids that can be realized by a projection of a 3-dimensional polygonal chain with $n$ edges, we denote $K_i, i=1,\dotsc k(n)$. 

Then Eq. \ref{avk} is equivalent to the following sum:

\begin{equation}\label{avk2}
\langle E_n\rangle=\sum_{i=1}^kP(K(E_n)_{\vec{\xi}}=K_i)\langle K_i\rangle=\sum_{i=1}^kp^{(n)}_i\langle K_i\rangle
\end{equation}

\noindent where $K((E_n)_{\vec{\xi}})$ denotes the knotoid corresponding to $(E_n)_{\vec{\xi}}$ and we denote $p^{(n)}_i=P(K(E_n)_{\vec{\xi}}=K_i)$, the probability that a projection of $E_n$ gives knotoid $K_i$.

\begin{remark}
Here and in the following, by ``probability that a projection of $x$ gives $y$'' we mean the ratio of the area on the (unit) sphere that defines vectors with respect to which the projection of $x$ is $y$ (or of type $y$) over the area of the entire sphere. 
\end{remark}

Let $m$ denote the maximum degree of $\langle K_i\rangle,i=1,\dotsc,k$ and let $L_m$ denote the space of Laurent polynomials of degree less than or equal to $m$. Then $\langle E_n\rangle$ is a function from $EW_n$ to $L_m$.

\begin{lemma}\label{prob} The probability $p^{(n)}_i=P(K(E_n)_{\vec{\xi}}=K_i)$ is a continuous function of the chain coordinates of $E_n$.
\end{lemma}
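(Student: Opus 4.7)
The plan is to express $p_i^{(n)}(\mathbf{x})$ as the integral of an indicator function over $S^2$ and apply the dominated convergence theorem. Write $E_n(\mathbf{x})$ for the polygonal chain with vertex coordinates $\mathbf{x}\in EW_n$, and set $U_i(\mathbf{x})=\{\vec{\xi}\in S^2 : K((E_n(\mathbf{x}))_{\vec{\xi}})=K_i\}$, so that $p_i^{(n)}(\mathbf{x})=\frac{1}{4\pi}\int_{S^2}\mathbf{1}_{U_i(\mathbf{x})}(\vec{\xi})\,dS$. It then suffices to show that for any sequence $\mathbf{x}_k\to\mathbf{x}_0$ in $EW_n$, the indicators $\mathbf{1}_{U_i(\mathbf{x}_k)}$ converge pointwise to $\mathbf{1}_{U_i(\mathbf{x}_0)}$ off a measure-zero subset of $S^2$; since each indicator is bounded by $1$ on the finite-measure space $S^2$, dominated convergence then gives $p_i^{(n)}(\mathbf{x}_k)\to p_i^{(n)}(\mathbf{x}_0)$.

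First I would isolate the set of non-generic projection directions for $E_n(\mathbf{x}_0)$,
\begin{equation}
B(\mathbf{x}_0)=\{\vec{\xi}\in S^2 : (E_n(\mathbf{x}_0))_{\vec{\xi}} \text{ is not a generic knotoid diagram}\}.
\end{equation}
For a polygonal chain, non-genericity reduces to a finite list of algebraic conditions on $\vec{\xi}$: $\vec{\xi}$ is parallel to some edge, a vertex projects onto a non-adjacent edge, two non-adjacent edges project onto overlapping segments, or three edges share a common projected point. Each such condition cuts out a proper real-algebraic subset of $S^2$, so $B(\mathbf{x}_0)$ is a finite union of points and smooth arcs and has spherical measure zero.

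The key step is a stability claim: if $\vec{\xi}\notin B(\mathbf{x}_0)$, then for all $\mathbf{x}$ in a neighborhood of $\mathbf{x}_0$ in $EW_n$ the diagrams $(E_n(\mathbf{x}))_{\vec{\xi}}$ and $(E_n(\mathbf{x}_0))_{\vec{\xi}}$ share the same combinatorial Gauss code with the same crossing signs, hence represent the same knotoid $K_i$. The genericity of $\vec{\xi}$ for $E_n(\mathbf{x}_0)$ is equivalent to a finite collection of strict polynomial inequalities in $(\mathbf{x},\vec{\xi})$ expressing, for each pair of non-adjacent edges, whether they cross transversely and, if so, the sign of the height difference determining over/under. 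Because these are open conditions and finitely many, they persist under a sufficiently small perturbation of $\mathbf{x}_0$. This yields $\mathbf{1}_{U_i(\mathbf{x}_k)}(\vec{\xi})\to\mathbf{1}_{U_i(\mathbf{x}_0)}(\vec{\xi})$ on $S^2\setminus B(\mathbf{x}_0)$, and dominated convergence closes the argument. The main obstacle is this stability step, which requires writing down explicitly the polynomial inequalities detecting and signing every potential crossing and verifying that outside $B(\mathbf{x}_0)$ they are all strict and therefore stable under perturbation.
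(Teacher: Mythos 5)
Your argument is correct, but it takes a genuinely different route from the paper's. The paper proves the lemma by a quantitative $\epsilon$--$\delta$ estimate: the region $A_0\subset S^2$ of directions yielding the knotoid $K_i$ is bounded by arcs of great circles, each determined by a plane through a vertex and a non-adjacent edge; perturbing a vertex by $\delta\vec{a}$ tilts each of the (at most $4(n-2)$) affected planes by an angle at most $\arctan\left(\|\delta\vec{a}\|/d\right)$, where $d$ is the minimal vertex--edge distance, so $|A_0-A_0'|$ is controlled by the total area of the resulting lunes. Your proof instead writes $p_i^{(n)}$ as the integral over $S^2$ of the indicator of $U_i(\mathbf{x})$ and combines two soft facts: the non-generic directions form a null set, and at each generic direction the signed crossing data of the projected diagram is locally constant in the chain coordinates, so the indicator converges pointwise almost everywhere and dominated convergence applies. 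Both are sound. The paper's approach buys an explicit modulus of continuity (useful if one wants rates, e.g. for error control in computations), at the cost of some delicate bookkeeping about which great circles bound $A_0$ and a small-angle approximation at the end; yours buys a cleaner, more robust argument that generalizes immediately (for instance to the joint probabilities $p^{(n)}_{i,j}$ used in Corollary \ref{contcurveJ}) but yields no rate. One point worth making explicit in your stability step: what preserves the knotoid type is not the abstract signed Gauss code alone but the fact that the perturbed diagram is carried to the original by a planar isotopy creating or destroying no crossings; since all your defining inequalities are strict at a generic $\vec{\xi}$, this isotopy exists, so the step does go through.
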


\begin{proof}
Notice that 

\begin{equation}
p^{(n)}_i=\frac{2A_0 }{4\pi}
\end{equation}

\noindent where $A_0=$ Area on the sphere corresponding to vectors $\vec{\xi}$ such that: $K((E_n)_{\vec{\xi}})=K_i$. For a polygonal chain, this area will be bounded by a finite number of great circles, each of which is determined by an edge and a vertex of the polygonal chain, as in \cite{Banchoff1976}.

Let $\epsilon>0$. Let $\vec{a}_j$ be the position of a vertex of $E_n$. Let $d=\min_{k,l}d_{k,l}$, where $d_{k,l}=dist(\vec{a}_j,\vec{a}_k-\vec{a}_l)$ (the distance between the vertex $\vec{a}_j$ and the segment connecting $\vec{a}_k,\vec{a}_l$) . Suppose that $\vec{a}_j$ changes by $\delta \vec{a}$, such that $||\delta \vec{a}||<\frac{2\pi d\epsilon}{8(n-2)}$. Then, the projection of the edges $e_{j-1}=\vec{a}_j-\vec{a}_{j-1}$ and $e_j\vec{a}_{j+1}-\vec{a}_j$ in any projection direction might change and the great circles involving the vertex $\vec{a}_j$ might change as well. Each of these two edges, $e_{j-1},e_j$ is involved in $(n-2)$ pairs of edges with which they may cross in a projection  and each such pair consists of 3 faces containing $a_j$, one of which is counted in both the $e_{j-1}$ and the $e_j$ pairs. Thus, a change in $a_j$ can affect $4(n-2)$ planes. Let $\vec{u}$ be the normal vector to one of these planes, say the one formed by the vertices $\vec{a}_j,\vec{a}_l,\vec{a}_{l+1}$. The normal vector to the new plane containing $\vec{a}_j+\delta\vec{a},\vec{a}_l,\vec{a}_{l+1}$, will change to $\vec{u}+\vec{\delta u}$. If that plane was one of the great circles bounding $A_0$, then $A_0$ may also change to $A_0'$ (and $p_i^{(n)}$ to $p_i^{(n)}\prime$, accordingly). The change in area $|A_0-A_0'|$ will be bounded above by the area of the lune defined by the great circles with normal vectors $\vec{u}$ and $\vec{u}+\vec{\delta u}$, which is equal to $\alpha=2\theta$, where $\theta$ is the dihedral angle between the two great circles, which is equal to the angle between $\vec{u}$ and $\vec{u}+\vec{\delta u}$. The maximum value of that angle will occur if $\delta \vec{a}$ is orthogonal to the plane $\vec{a}_j,\vec{a}_l,\vec{a}_{l+1}$, which means when $\delta\vec{a}$ is parallel to $\vec{u}$. Then the angle $\theta$ is that of a right triangle with one edge of length $d_{k,l}=dist(\vec{a}_j,\vec{a}_k-\vec{a}_l)$ and the other of length $||\delta \vec{a}||$. Thus $\tan\theta=\frac{||\delta \vec{a}||}{d_{k,l}}$.  Thus, the change in the area is

\begin{align}
|A_0-A_0'|&\leq4(n-2) 2\arctan(\frac{||\delta \vec{a}||}{d})<8(n-2)\arctan(\frac{2\pi d\epsilon}{8(n-2)d})\nonumber\\
&\approx 8(n-2)\frac{\epsilon}{8(n-2)}=2\pi\epsilon
\end{align}

\noindent where we used the small angle approximation. Thus $|p_i^{(n)}-p_i^{(n)}\prime|<\epsilon$.

\end{proof}

\begin{proposition}\label{cont}
The bracket polynomial, $\langle E_n\rangle$, is a continuous function of the chain coordinates. In other words it is a continuous function in the space of configurations of $E_n$.
\end{proposition}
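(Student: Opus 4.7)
The plan is to reduce continuity of $\langle E_n\rangle$ to continuity of its coefficients in a finite-dimensional space of Laurent polynomials, and then invoke Lemma~\ref{prob} coefficient by coefficient.

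First I would recall the decomposition
\begin{equation*}
\langle E_n\rangle = \sum_{i=1}^{k(n)} p_i^{(n)}\,\langle K_i\rangle,
\end{equation*}
from Eq.~\ref{avk2}, where the sum runs over the \emph{fixed} finite list $K_1,\dots,K_{k(n)}$ of all knotoid types that can arise as generic projections of any polygonal chain with $n$ edges. The key point is that this index set depends only on $n$, not on the particular configuration $E_n\in EW_n$; if some type $K_i$ does not occur as a projection of the given $E_n$, we simply set $p_i^{(n)}=0$. Thus the $\langle K_i\rangle\in L_m$ are fixed Laurent polynomials, and only the scalar probabilities $p_i^{(n)}$ depend on the chain coordinates.

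Next, I would identify $L_m$ with a finite-dimensional real vector space by reading off monomial coefficients; with the standard topology on $L_m$ (equivalently, coefficient-wise convergence), a map from $EW_n$ into $L_m$ is continuous if and only if each coefficient is a continuous scalar function of the chain coordinates. For each monomial, the corresponding coefficient of $\langle E_n\rangle$ is the finite linear combination
\begin{equation*}
\sum_{i=1}^{k(n)} p_i^{(n)}\,c_i,
\end{equation*}
where $c_i\in\mathbb{R}$ is the (fixed) coefficient of that monomial in $\langle K_i\rangle$. Since Lemma~\ref{prob} establishes that each $p_i^{(n)}$ is a continuous function of the coordinates of $E_n$, and finite $\mathbb{R}$-linear combinations of continuous real-valued functions are continuous, every coefficient of $\langle E_n\rangle$ is continuous in the chain coordinates.

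Combining these observations, $\langle E_n\rangle\colon EW_n\to L_m$ is continuous, which is precisely the claim. The only genuinely delicate point, and the one I would flag explicitly, is the choice to index the sum over the configuration-independent collection $\{K_i\}_{i=1}^{k(n)}$; without this uniform indexing, the set of knotoid types appearing with nonzero probability may jump discontinuously as $E_n$ is deformed across non-generic configurations, even though the probabilities $p_i^{(n)}$ themselves vary continuously. Lemma~\ref{prob} handles exactly this scenario, since a type that is ``about to appear'' has $p_i^{(n)}$ tending continuously to zero, so no extra argument is required beyond the linearity step above.
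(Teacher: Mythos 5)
Your proposal is correct and follows essentially the same route as the paper: express $\langle E_n\rangle$ via the decomposition of Eq.~\ref{avk2} in the finite-dimensional space $L_m$, and deduce continuity coefficient-by-coefficient from Lemma~\ref{prob}. If anything, your version is slightly more careful than the paper's, which states that "the coefficients of this polynomial are $p_i$" where you correctly observe that each monomial coefficient is the finite linear combination $\sum_i p_i^{(n)} c_i$ of the continuous probabilities with the fixed coefficients of the $\langle K_i\rangle$.
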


\begin{proof}
We consider the standard Euclidean norm over the space of Laurent polynomials of a fixed degree. Since the coefficients of this polynomial are $p_i$, then $||\langle E_n\rangle||=\sqrt{\sum p_i^2}$. Since each coefficient $p_i$ is a continuous function of the chain coordinates, it follows, that $\langle K(E_n)\rangle$ will also be continuous with the norm mentioned above.
\end{proof}

\begin{corollary}\label{contcurve}
The bracket polynomial of a curve $l$ in space,  $\langle l\rangle$, is a continuous function in the space of configurations of $l$.
\end{corollary}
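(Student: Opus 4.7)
The natural plan is to reduce Corollary \ref{contcurve} to the polygonal case of Proposition \ref{cont} via an approximation argument. Working in the space of piecewise $C^1$ curves with the $C^1$ topology, I would fix a curve $l$ and construct inscribed polygonal approximations $E_n^l$ with $n$ vertices sampled from $l$ at equal arc-length intervals $1/n$. As $n \to \infty$, $E_n^l \to l$ in this topology, and for each fixed $n$ the assignment $l \mapsto E_n^l$ is itself continuous in $l$.

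The central step is to show $\langle E_n^l \rangle \to \langle l \rangle$ in the Euclidean norm on Laurent polynomials, with convergence uniform as $l$ ranges over a small $C^1$-neighborhood of a reference curve. The key observation is that for almost every projection direction $\vec{\xi} \in S^2$, the diagram $(l)_{\vec{\xi}}$ is a generic knotoid diagram with only finitely many transversal double points; a sufficiently fine inscribed chain $E_n^l$ then produces a projection $(E_n^l)_{\vec{\xi}}$ with the same combinatorial crossing pattern, hence the same knotoid type and the same bracket polynomial. Thus the integrands appearing in Eq.\ \eqref{avk} for $\langle E_n^l \rangle$ and $\langle l \rangle$ agree outside a set of measure zero for $n$ large. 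A dominated convergence argument, using a uniform bound on the bracket polynomial coefficients in terms of the crossing number (which is finite a.e.\ on $S^2$ for curves in a bounded-curvature family), then delivers $\langle E_n^l \rangle \to \langle l \rangle$.

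The corollary then follows by a triangle inequality: for $l, l'$ in a small neighborhood of each other,
\begin{equation*}
\|\langle l \rangle - \langle l' \rangle\| \leq \|\langle l \rangle - \langle E_n^l \rangle\| + \|\langle E_n^l \rangle - \langle E_n^{l'} \rangle\| + \|\langle E_n^{l'} \rangle - \langle l' \rangle\|.
\end{equation*}
Choosing $n$ large bounds the first and third terms by $\epsilon/3$ via the uniform convergence above. For this fixed $n$, Proposition \ref{cont} applied to the polygonal chains $E_n^l$ and $E_n^{l'}$, whose vertices are close whenever $l$ and $l'$ are $C^1$-close, bounds the middle term by $\epsilon/3$.

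The main obstacle is justifying the uniform bound required for dominated convergence: one must control the growth of the bracket polynomial coefficients uniformly over generic projection directions and over a $C^1$-neighborhood of $l$. This rests on a geometric estimate for the number of transversal crossings in a generic projection of a curve in a uniformly bounded family, which can be extracted from integral-geometric considerations (in the spirit of the average crossing number) together with the fact that non-generic projection directions form a set of measure zero in $S^2$.
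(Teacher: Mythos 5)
Your proof takes essentially the same route as the paper's: approximate $l$ by inscribed polygonal curves and invoke Proposition \ref{cont} in the limit, which is exactly the paper's (one-sentence) argument, except that you additionally supply the $\epsilon/3$ triangle-inequality decomposition, the a.e.\ agreement of the projected diagrams of $l$ and of a fine inscribed chain, and the dominated-convergence step that the paper leaves entirely implicit. The one issue you honestly flag as the main obstacle --- exhibiting an integrable dominating function for the bracket coefficients, which grow exponentially (not linearly) in the crossing number, uniformly over a $C^1$-neighborhood --- is likewise unaddressed in the paper, so your write-up is at least as complete as the original.
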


\begin{proof}
By approximating $l$ by a polygonal curve, $l_n$ and taking the limit as $n\rightarrow\infty$ by Proposition \ref{cont}, follows that $l$ is continuous.
\end{proof}

\begin{remark}
The above definitions hold if one considers knotoids in $S^2$ or planar knotoids. The difference will be in the number $k(n)$, which is higher for planar knotoids.
\end{remark}

\begin{remark}
Using the state formula for the bracket polynomial of a knotoid, we obtain the following state formula for the bracket polynomial of a polygonal curve in 3-space:

\begin{equation}\label{state}
\langle E_n\rangle=\sum_{i=1}^kP(K(E_n)_{\vec{\xi}}=K_i)\sum_{j=1}^{m_i}A^{\sigma(S_j)}d^{||S_j||-1}
\end{equation}

\noindent where the first sum is taken over all realizable knotoids of $n$ edges and the second sum is taken over all states, $S_j$, of the $i$-th realizable knotoid, $\sigma(S_j)$ is the sum of the labels of the state $S_j$, $||S_j||$ is the number of components of $S_j$, and $d=(-A^2-A^{-2})$.

By expanding the summands, $\langle K(E_n)_{\vec{\xi}}\rangle$ can be expressed as

\begin{equation}\label{stat2}
\langle E_n\rangle=\sum_{l=1}^{M}p^{(n)}_lA^{\sigma(S_l)}d^{||S_l||-1}
\end{equation}

\noindent where $S_l$ are all the possible states of $E_n$, $M$ is the total number of distinct states that appear as projections of $E_n$ and $p^{(n)}_l$ is equal to the probability of state $l$. Using the standard definition of a state of a diagram of a knotoid, the states are uniquely identified for a knotoid diagram, giving $S_j\neq S_{j\prime}$ for any two states of a knotoid $K_i$ and also $S_j\neq S_u$ for any states $S_j$ of $K_i$ and $S_u$ of $K_v$. Then $M=\sum_{i=1}^km_i$, where $m_i$ are the states corresponding to the knotoid diagram $K_i$, and $p^{(n)}_l$ is equal to the probability of obtaining a specific diagram of the knotoid to which $S_l$ corresponds. Different definitions of state or of a probability of a state can be used, changing the expression of Eq. \ref{stat2}. For example, we could define the probability of a state $S_l$ in the space of configurations of states to be $P(S_l) =\frac{p^{(n)}_l}{2^n}$, where $p^{(n)}_l$ is equal to the probability of obtaining the specific knotoid diagram to which $S_l$ corresponds. Then Eq. \ref{stat2} would become:

\begin{equation}\label{stat3}
\langle E_n\rangle=\sum_{l=1}^{M}2^nP(S_l)A^{\sigma(S_l)}d^{||S_l||-1}
\end{equation}

\end{remark}

\subsection{The Jones polynomial of open chains in 3-space}

The Jones polynomial of an open chain in 3-space is defined using the normalized bracket polynomial of an open chain:

\begin{definition}
Let $l$ denote a curve in 3-space. Let $(l)_{\vec{\xi}}$ denote the projection of $l$ on a plane with normal vector $\xi$. 

The normalized bracket polynomial of $l$ is defined as: 

\begin{equation}\label{avnk}
f_{K(l)}=\frac{1}{4\pi}\int_{\vec{\xi}\in S^2}(-A^3)^{-wr((l)_{\vec{\xi}})}\langle (l)_{\vec{\xi}}\rangle dS
\end{equation}

\noindent where the integral is over all vectors in $S^2$ except a set of measure zero (corresponding to non-generic projections).

\end{definition}

\begin{remark}
The same definition applies to define the Jones polynomial of many open and/or closed curves in space by replacing $l$ by a many component open/closed or mixed collection of open and closed curves. In the case of a collection of closed chains (a traditional link), the Jones polynomial is a topological invariant. In the case of open chains it is a continuous function in the space of configurations. In this manuscript we focus on one component, but the same analysis holds for many chains in 3-space.
\end{remark}

\noindent\textit{Properties of the Jones polynomial of chains in 3-space}

\noindent(i) For closed chains, the Jones polynomial defined in Eq. \ref{avnk} is  a topological invariant and coincides with the classical Jones polynomial of a knot (see Corollary \ref{Jonesclosed})

\noindent(ii) For open chains, the Jones polynomial has real coefficients and is a continuous function of the chain coordinates   (see Corollary \ref{contcurveJ}).

\noindent(iii) For an open chain the Jones polynomial is not the polynomial of a corresponding/approximating closed curve, nor that of a corresponding/approximating knotoid.

\begin{corollary}\label{Jonesclosed}
In the case where $l$ is a closed curve, $f_{l}=(-A^3)^{-wr((l)_{\vec{\xi}})}\langle (l)_{\vec{\xi}}\rangle$ for all $\vec{\xi}\in S^2$. 
\end{corollary}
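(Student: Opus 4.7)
My plan is to show that for a closed curve the integrand in Eq. \ref{avnk} is constant almost everywhere on $S^2$, so the averaged quantity equals any pointwise value. The corollary then reduces to the statement that $(-A^3)^{-wr((l)_{\vec{\xi}})}\langle (l)_{\vec{\xi}}\rangle$ is independent of the generic projection direction $\vec{\xi}$.

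To establish that independence, I would invoke the classical fact, recalled at the end of Section \ref{bracket}, that the normalized bracket polynomial $f_K = (-A^3)^{-wr(K)}\langle K \rangle$ of a knot diagram $K$ is invariant under all three Reidemeister moves, and therefore depends only on the ambient isotopy class of the underlying knot. I would then argue that for a closed curve $l$ in 3-space, any two generic projections $(l)_{\vec{\xi}_1}$ and $(l)_{\vec{\xi}_2}$ yield knot diagrams of the \emph{same} knot: one can connect $\vec{\xi}_1$ and $\vec{\xi}_2$ by a generic path on $S^2$ avoiding the measure-zero set of non-generic directions except at finitely many points, and at each such crossing of the non-generic set the projected diagram changes by a single Reidemeister move (the standard codimension-one analysis of projections of a fixed closed curve in space). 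Consequently, the diagrams $(l)_{\vec{\xi}_1}$ and $(l)_{\vec{\xi}_2}$ are Reidemeister equivalent, and so $(-A^3)^{-wr((l)_{\vec{\xi}_1})}\langle (l)_{\vec{\xi}_1}\rangle = (-A^3)^{-wr((l)_{\vec{\xi}_2})}\langle (l)_{\vec{\xi}_2}\rangle$.

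Once the integrand is known to take a common value, call it $c$, on the complement of a measure-zero subset of $S^2$, the corollary follows from the trivial computation
\begin{equation}
f_{K(l)} = \frac{1}{4\pi}\int_{\vec{\xi}\in S^2} c\, dS = \frac{c}{4\pi}\cdot 4\pi = c = (-A^3)^{-wr((l)_{\vec{\xi}})}\langle (l)_{\vec{\xi}}\rangle
\end{equation}
for any generic $\vec{\xi}\in S^2$, with the equality extended to all $\vec{\xi}\in S^2$ by convention on the measure-zero exceptional set.

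The only substantive step is the topological invariance argument in the middle paragraph; everything else is measure-theoretic bookkeeping. I expect that step to be the main obstacle only in the sense that it relies on the standard codimension-one stratification of projection directions, which is customarily cited rather than re-proved. If a fully self-contained proof is desired, I would include a brief lemma recording that for a closed smooth (or piecewise linear) curve in $\mathbb{R}^3$, the set of non-generic directions in $S^2$ is a union of finitely many arcs of great circles, and that transverse crossings of these arcs correspond precisely to Reidemeister moves $\Omega_1$, $\Omega_2$, $\Omega_3$ on the projected diagram; this is what legitimizes the reduction to the classical invariance of $f_K$ under Reidemeister moves already recalled from \cite{Kauffman1987}.
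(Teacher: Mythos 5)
Your proposal is correct and follows essentially the same route as the paper: observe that $(-A^3)^{-wr((l)_{\vec{\xi}})}\langle (l)_{\vec{\xi}}\rangle$ is a topological invariant of the knot, hence independent of the (generic) projection direction, so the spherical average of a constant integrand is that constant. The paper states the diagram-independence as a known fact in one line, whereas you additionally sketch the standard justification via Reidemeister moves along a generic path of projection directions; this is a fuller account of the same argument, not a different one.
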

 
\begin{proof}
Le $l$ be a closed curve, and let $\vec{\xi}\in S^2$. Then its projection $l_{\vec{\xi}}$ is a knot diagram and $(-A^3)^{-wr((l)_{\vec{\xi}})}\langle (l)_{\vec{\xi}}$ is a topological invariant that does not depend on the particular diagram of the knot. Thus $f_{l}=\frac{1}{4\pi}\int_{\vec{\xi}\in S^2}(-A^3)^{-wr((l)_{\vec{\xi}})}\langle (l)_{\vec{\xi}}\rangle dS=\frac{1}{4\pi}4\pi(-A^3)^{-wr((l)_{\vec{\xi}})}\langle (l)_{\vec{\xi}}\rangle$.
\end{proof}

For a polygonal chain of $n$ edges,  Eq. \ref{avnk} is equivalent to the following sum:

\begin{equation}\label{avnk2}
f_{(E_n)}=\sum_{i=1}^k\sum_{j=-m}^mP(K(E_n)_{\vec{\xi}}=K_i,wr((E_n)_{\vec{\xi}}=j))(-A^3)^{-j}\langle K_{i,j}\rangle=\sum_{i=1}^k\sum_{j=-m}^mp^{(n)}_{i,j}(-A^{3})^{-j}\langle K_{i,j}\rangle
\end{equation}

\noindent where we denote $p^{(n)}_{i,j}=P(K(E_n)_{\vec{\xi}}=K_i,wr((E_n)_{\vec{\xi}})=j)$, $k=k(n)$ and  $m=m(n,i)$.

\begin{corollary}\label{contcurveJ} The normalized bracket polynomial of an open chain in 3-space is a continuous function of the chain coordinates.
\end{corollary}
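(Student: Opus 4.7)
The plan is to mirror the proof scheme used for the (unnormalized) bracket polynomial: first treat polygonal chains using the finite-sum expansion in Eq. \ref{avnk2}, then pass to arbitrary curves by polygonal approximation as in Corollary \ref{contcurve}. Fix an integer $n$ and a polygonal chain $E_n$. By Eq. \ref{avnk2}, $f_{(E_n)}$ is a finite $\mathbb{Z}$-linear combination of fixed Laurent polynomials $(-A^3)^{-j}\langle K_{i,j}\rangle$ weighted by the joint probabilities $p^{(n)}_{i,j}=P\bigl(K((E_n)_{\vec{\xi}})=K_i,\ wr((E_n)_{\vec{\xi}})=j\bigr)$. As in Proposition \ref{cont}, it therefore suffices to prove that each coefficient $p^{(n)}_{i,j}$ is a continuous function of the vertices of $E_n$; continuity of $f_{(E_n)}$ with respect to the standard Euclidean norm on the space of Laurent polynomials of bounded degree follows immediately.

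For the continuity of $p^{(n)}_{i,j}$, I would recycle the geometric argument of Lemma \ref{prob}. The essential point is that, for a fixed polygonal chain, the functions $\vec{\xi}\mapsto K((E_n)_{\vec{\xi}})$ and $\vec{\xi}\mapsto wr((E_n)_{\vec{\xi}})$ are both constant on the complement of a finite union of great circles on $S^2$. Indeed, the knotoid type of a projection can change only when the projection passes through a non-generic direction (two vertices project to the same point, an edge degenerates, or two crossings coincide); the writhe, being the signed crossing count, can change only at exactly the same events (appearance/disappearance of a crossing or a sign flip, each of which corresponds to a non-generic projection). All such non-generic directions lie on great circles determined by pairs (edge, vertex) of $E_n$, precisely as in \cite{Banchoff1976}. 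Consequently the set $\{\vec{\xi}:K((E_n)_{\vec{\xi}})=K_i,\ wr((E_n)_{\vec{\xi}})=j\}$ is a finite union of spherical polygons bounded by these great circles, and $p^{(n)}_{i,j}$ is the normalized spherical area of this region.

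A perturbation $\delta\vec{a}$ of a single vertex perturbs each such great circle by a dihedral angle bounded by the same estimate as in Lemma \ref{prob} (a right-triangle argument with leg lengths $\|\delta\vec{a}\|$ and $d=\min_{k,l}\operatorname{dist}(\vec{a}_j,\vec{a}_k-\vec{a}_l)$). The change in area of each bounded region is controlled by the areas of the resulting lunes, so summing over the $O(n)$ affected planes yields $|p^{(n)}_{i,j}-p'^{(n)}_{i,j}|<\epsilon$ whenever $\|\delta\vec{a}\|$ is sufficiently small. Hence each $p^{(n)}_{i,j}$ is continuous, and so is $f_{(E_n)}$.

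Finally, to extend to an arbitrary curve $l$ in 3-space, I would approximate $l$ by a sequence of polygonal chains $l_n$ with vertices on $l$, apply the polygonal case to each $l_n$, and pass to the limit $n\to\infty$, exactly as in the proof of Corollary \ref{contcurve}. The main obstacle I anticipate is the joint-event bookkeeping: one must verify carefully that the discontinuity set of $\vec{\xi}\mapsto wr((E_n)_{\vec{\xi}})$ is genuinely contained in the finite collection of great circles that bound the knotoid-type regions, so that the perturbation estimate for area transfers without loss to the joint probability $p^{(n)}_{i,j}$; once this is verified, the rest of the argument is essentially a repetition of Lemma \ref{prob} and Proposition \ref{cont}.
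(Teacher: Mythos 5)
Your proposal is correct and follows essentially the same route as the paper, which simply observes that the joint probabilities $p^{(n)}_{i,j}$ are continuous ``in a similar way as in Lemma \ref{prob}'' and then passes to the limit for general curves; you have merely filled in the details the paper leaves implicit (in particular, that the writhe can only change on the same great-circle discontinuity set as the knotoid type). No further comparison is needed.
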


\begin{proof} In a similar way as in Lemma \ref{prob}, one can show that for a polygonal chain of $n$ edges, $p^{(n)}_{i,j}$ is a continuous function of the chain coordinates for all $i,j,n$ and use that for the limiting case of any simple curve $l$ in 3-space.
\end{proof}

\noindent\textbf{Example 1:}  Figure \ref{fig:examples} shows three snapshots of a polygonal chain, $I$, whose last edge deforms with time as the last vertex position changes according to the parametrization $I(t)=((0,1,0),(0,0,0),(-0.2,0.8,0.8),(0.1,0.8,-0.8),(0.1+1.2\cos (a+t),0.5,-0.8+1.2\sin(a+t)))$. The coordinates of the chain in the three snapshots in Figure \ref{fig:examples} are:

$I(t_0)=((0, 1, 0),(0, 0, 0),(-0.2, 0.8, 0.8),(0.1, 0.8, -0.8),(0.76, 0.5, 0.19))$, 

$I(t_1)=((0, 1, 0),(0, 0, 0),(-0.2, 0.8, 0.8),(0.1, 0.8, -0.8),(0.35, 0.5, 0.37))$ and 

$I(t_2)=((0, 1, 0),(0, 0, 0),(-0.2, 0.8, 0.8),(0.1, 0.8, -0.8), (-0.02, 0.5, 0.39))$.

\noindent where $t_0=0, t_1=4000$ and  $t_2=11300$, in units of $2\pi/100000$, and $a=32000\pi/100000$.


\begin{figure}[H]
   \begin{center}
     \includegraphics[width=0.3\textwidth]{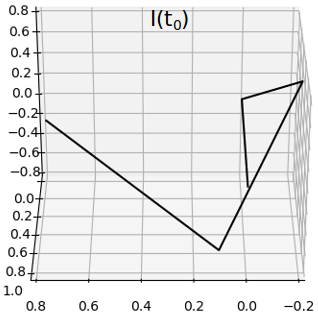}\includegraphics[width=0.3\textwidth]{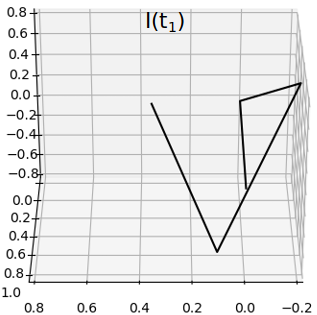}\includegraphics[width=0.3\textwidth]{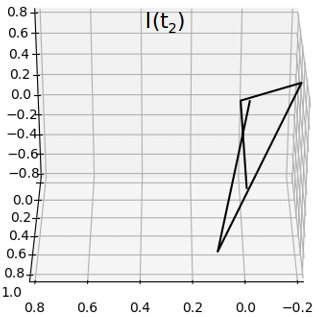}\\
     \caption{Three snapshots of a polygonal chain in 3-space with 3 fixed edges and one deforming edge in 3-space. From $t_0$ to $t_2$, the chain tightens a configuration that gives the knotoid $k2.1$ in most projection directions and could lead to the creation of a trefoil knot if was able to thread through (more edges are needed for that \cite{Calvo2001}).}
     \label{fig:examples}
   \end{center}
\end{figure}

The Kauffman bracket at the start and end time is:

\begin{equation}
\begin{split}
    &\langle I(t_0)\rangle= 0.06A^2-0.77A^{-3}-0.06A^{-4}+0.07A^{-6}+0.15\\
    &\langle I(t_2)\rangle=0.71A^2-0.71A^{-4}-0.14A^{-3}+0.05A^{-6}+0.14\\
\end{split}
\end{equation}

\noindent to be compared with the values of the bracket polynomial of the typical configuration of the right-handed trefoil knot, $T_R$ and the right handed $k2.1$ knotoid, which are equal to

\begin{equation}
\begin{split}
    &\langle T_R\rangle= A^{-7}+A^{5}-A^{-3}\\
    &\langle k2.1\rangle= A^{2}-A^{-4}+1
\end{split}
\end{equation}

The Jones polynomial at each time is

\begin{equation}
\begin{split}
    &f(I(t_0))= 0.06t-0.06t^{5/2}+0.06t^{3/2}+0.94\\
    &f(I(t_2))= 0.71t-0.71t^{5/2}+0.71t^{3/2}+0.29\\
\end{split}
\end{equation}

to be compared with the Jones polynomial of the right-handed trefoil knot, $T_R$ and the right handed $k2.1$ knotoid, which are equal to

\begin{equation}
\begin{split}
    &f(T_R)= t+t^3-t^4\\
    &f(k2.1)= t+t^{3/2}-t^{5/2}
\end{split}
\end{equation}

\begin{remark}
Using the state formula for the bracket polynomial of a knotoid, we obtain the following state formula for the normalized bracket polynomial of a polygonal curve in 3-space:

\begin{equation}\label{avnk3}
f_{(E_n)}=\sum_{i=1}^k\sum_{j=1}^mp^{(n)}_{i,j}(-A^{3})^{-j}\sum_{S_i}A^{\sigma(S_i)}d^{||S_i||-1}
\end{equation}

\noindent where the first sum is taken over all realizable knotoids of $n$ edges and the second sum is taken over all states, $\sigma(S_i)$ of the $i$-th realizable knotoid, is the sum of the labels of the state $S_i$, $||S_i||$ is the number of components of $S_i$, and $d=(-A^2-A^{-2})$.

\end{remark}

\begin{remark}[Comparison with previous methods]
Due to the urgency of measuring complexity in physical systems, several approaches have appeared in the last decade that attempt to use  knot and link polynomials \cite{Laso2009,Millett2004,Sulkowska2012,Gugumcu2017,Goundaroulis2017}. The underlying idea in these methods is to approximate an open chain in 3-space by a knot (\textit{dominant knot}) or by a knotoid (\textit{dominant knotoid}) that best captures its entanglement. Both the dominant knot and the dominant knotoid have been successful in characterizing proteins \cite{Sulkowska2012,Goundaroulis2017}. 
Even though these approaches are very helpful, they can at best approximate an open chain by either one closed chain or by one of its projections, respectively, and in practice, they might even give different answers for different choice of points on the sphere. 
Putting these methods in the framework we established in this paper, they consist in computing the knot-type or the knotoid type with highest probability of occurring in a projection. In this study instead, we use the average of all the bracket polynomials of all the knotoids that occur. As we discussed in the previous paragraphs, this simple modification provides for the first time a well defined measure of entanglement of open chains, other than the Gauss linking integral (see all the properties mentioned above). To understand the difference between the information captured by the two methods we draw a comparison between the linking number and the Gauss linking integral: the dominant knot/knotoid method would correspond to the integer linking number that occurs in the most projections of an open chain, while the definition we give here, would correspond to that of the Gauss linking integral (the average linking number over all projections).
\end{remark}

\begin{remark}[Comparison of the Jones polynomial of open chains with the Gauss linking integral]
Notice that when applied to one chain, the Jones polynomial gives stronger information that the writhe of the chain. It is known that one can create an open or closed unknotted chain which has high writhe (consider for example a helix). However, it is impossible to create a Jones polynomial of an open or closed chain that does not contain a knot, which is the same as that of an open or closed chain that contains a knot. The Jones polynomial of open or closed chains is stronger than the pairwise linking number of open or closed chains. For example, the Gauss linking integral cannot detect open or closed conformations of the Borromean ring. However, the Jones polynomial of the Borromean ring is different from that of the unlink and, by continuity, this is the case also for an open Borromean ring.
\end{remark}

\section{A finite form for the bracket polynomial of a polygonal curve with 4 edges}\label{finite}

In this section we show that an equivalent finite form of bracket polynomial exists, reducing the computation of the integral to a computation of a few dot and cross products between vectors and some arcsin evaluations. Here we provide a finite form of the bracket polynomial for a polygonal chain of 4 edges. This could lead to the creation of its finite form for more edges.

\subsection{Closed chains}\label{bracketclosed}

The first non-trivial bracket polynomial of a closed chain is that of a polygon of 4 edges, since a polygon of 3 edges is a triangle in 3-space and all projections give a diagram of no crossings except a set of measure zero which corresponds to non-generic projections.
Let $P_4$ denote a polygon of 4 edges, $e_1,e_2,e_3,e_4$ that connect the vertices $(0,1),(1,2),(2,3)$ and $(3,0)$, respectively.  Let $\epsilon_{i,j}$ denote the sign of the crossing between the projections of the edges $e_i,e_j$ when they cross. Notice that $\epsilon_{i,j}$ is independent of the projection direction and can take the values 1 and -1.

\begin{proposition}
The bracket polynomial of a polygon of 4 edges, $e_1,e_2,e_3,e_4$, in 3-space, $P_4$, is equal to:

\begin{equation}\label{bp4}
\langle P_4\rangle=2|L(e_1,e_3)|(-A^{3\epsilon_{1,3}})+2|L(e_2,e_4)|(-A^{3\epsilon_{2,4}})+(1-ACN(P_4))
\end{equation}

\noindent where $L$ denotes the Gauss linking integral and $ACN$ denotes the average crossing number.

\end{proposition}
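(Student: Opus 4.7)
The plan is to classify the knot diagrams arising as generic projections of $P_4$, compute their brackets from the skein relations of Eq.~\ref{skein}, and then apply the averaging formula Eq.~\ref{avk2}. To set up, note that adjacent edges of $P_4$ share a vertex, so any crossing in a projected diagram must involve one of the opposite pairs $(e_1,e_3)$ or $(e_2,e_4)$; moreover, for each such pair of skew edges, whenever they cross in a projection the sign of the crossing is the constant $\epsilon_{i,j}\in\{+1,-1\}$ determined by the 3-dimensional configuration of the two edges.

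The key geometric input is that in every generic projection at most one of the two potential crossings is realized. Indeed, for a generic projection direction the four projected vertices $v_0',v_1',v_2',v_3'$ are in general position in the plane, and either (a) they lie in convex position, in which case a short check comparing the labeling $v_0 v_1 v_2 v_3$ to the convex-hull cyclic order shows that the 4-gon produces either zero crossings or exactly one crossing between a single pair of opposite edges; or (b) one vertex lies in the interior of the triangle spanned by the other three, in which case the convexity of that triangle forces the two edges incident to the interior vertex to stay inside the triangle, so neither pair of opposite edges can cross. Consequently every generic projection of $P_4$ is a diagram of the unknot with either zero crossings, a single kink of sign $\epsilon_{1,3}$, or a single kink of sign $\epsilon_{2,4}$, whose brackets are $1$, $-A^{3\epsilon_{1,3}}$ and $-A^{3\epsilon_{2,4}}$ respectively by the skein relations together with the standard framing identity for a kink.

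Letting $p_0, p_{13}, p_{24}$ denote the $S^2$-measures (normalized by $4\pi$) of the directions producing each outcome, the mutual exclusion established above gives $p_0=1-p_{13}-p_{24}$, and Eq.~\ref{avk2} becomes
\begin{equation*}
\langle P_4\rangle = p_{13}(-A^{3\epsilon_{1,3}}) + p_{24}(-A^{3\epsilon_{2,4}}) + (1-p_{13}-p_{24}).
\end{equation*}
To convert this into the desired form, observe that since each pair of skew edges crosses at most once in any projection with constant sign, the projection interpretation of the Gauss linking integral gives $L(e_i,e_j) = \tfrac{1}{2}\epsilon_{i,j}\,p_{ij}$, hence $p_{ij}=2|L(e_i,e_j)|$. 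On the other hand, the absolute-value version of the Gauss integral over the chain decomposes as a sum over ordered non-adjacent edge pairs, so $ACN(P_4)=2|L(e_1,e_3)|+2|L(e_2,e_4)|=p_{13}+p_{24}$. Substituting these identities into the display yields Eq.~\ref{bp4}.

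The main obstacle is the geometric step, namely the claim that no generic projection of a 4-gon can realize both pairs of opposite edges crossing simultaneously; once this is in hand, the remainder of the argument is a short identification of probabilities with Gauss-integral quantities.
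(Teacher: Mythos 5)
Your proposal is correct and follows essentially the same route as the paper: classify the generic projections into the three mutually exclusive diagram types (no crossing, a kink from $e_1,e_3$, a kink from $e_2,e_4$), evaluate the bracket of each, and identify the projection probabilities with $2|L(e_i,e_j)|$ and their sum with $ACN(P_4)$. The only difference is in the justification of the mutual-exclusion step, where the paper uses a separating-line argument (the line through the projection of $e_1$ forces the projections of $e_2$ and $e_4$ into opposite half-planes once $e_1,e_3$ cross) while you use a convex-position case analysis of the four projected vertices; both are equivalent elementary arguments for the same key fact.
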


\begin{proof}

In any projection direction there are 3 possible diagrams that may occur as a projection of $P_4$:  a diagram with no crossing, or a crossing between the projections of $e_1,e_3$ or a crossing between the projections of $e_2,e_4$. Notice that not both crossings at the same diagram are possible (the line defined by the projection of $e_1$ cuts the plane in two regions. Since the projection of $e_3$ intersects the projection of $e_1$, the projections of the vertices 2 and 3 lie in different regions. Since $e_2$ joins vertex 1 with 2 and $e_4$ joins vertex 3 with 0, $e_2,e_4$ lie in different regions, thus they cannot cross.) In the case where there is no crossing, the bracket polynomial of that projection is equal to 1. When there is a crossing, the bracket polynomial is equal to $-A^{\pm3}$, where the sign of the exponent is determined by the sign of the crossing in the projection.
 Since the probability of $e_2,e_4$ crossing is equal to $2|L(e_2,e_4)|$ and the probability of $e_1,e_3$ crossing is $2|L(e_1,e_3)|$, then the bracket polynomial is

\begin{equation}\label{bp41}
\langle P_4\rangle=2|L(e_1,e_3)|(-A^{3\epsilon_{1,3}})+2|L(e_2,e_4)|(-A^{3\epsilon_{2,4}})+(1-ACN(P_4))
\end{equation}

\noindent where we used the fact that $ACN(P_4)=2|L(e_1,e_3)|+2|L(e_2,e_4)|$.
Notice that, due to the connectivity of the chain, $\epsilon_{1,3}=-\epsilon_{2,4}$, thus Eq. \ref{bp41} could be expressed as

\begin{align}
\langle P_4\rangle&=2|L(e_1,e_3)|(-A^{3\epsilon_{1,3}})+2|L(e_2,e_4)|(-A^{-3\epsilon_{1,3}})+(1-ACN(P_4))\nonumber
\end{align}

\end{proof}

\subsection{Open chains}

In the case of a polygonal chain with 3 edges, we denote $E_3$, the Kauffman bracket polynomial is always trivial, but the writhe of a diagram of a projection of $E_3$ can be 0 or $\pm1$, depending on whether $e_1,e_3$ cross when projected in a direction $\vec{\xi}$. 

\begin{proposition}
Let $E_3$ denote a polygonal chain of 3 edges, $e_1,e_2,e_3$ in 3-space, then the bracket polynomial of $E_3$ is

$$
\langle E_3\rangle=2|L(e_1,e_3)|(-A^3)^{\epsilon_{13}}+(1-2|L(e_1,e_3)|)
$$

\noindent where $\epsilon_{1,3}$ is the sign of $L(e_1,e_3)$
\end{proposition}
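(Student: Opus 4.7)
The plan is to mimic the argument used for $P_4$: enumerate the possible projection diagrams of $E_3$, compute the bracket of each one, and weight them by the probabilities given by the Gauss linking integral via Eq.~\ref{avk2}.

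First I would argue that in any generic projection direction $\vec{\xi}$, the only pair of edges that can cross is $(e_1,e_3)$. The pairs $(e_1,e_2)$ and $(e_2,e_3)$ share a vertex, so they cannot give a generic transverse crossing. Therefore the projection $(E_3)_{\vec{\xi}}$ is a knotoid diagram with either $0$ or $1$ crossings.

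Next I would compute the bracket polynomial for each of these two diagram types. If there is no crossing, the diagram is an embedded arc and $\langle K\rangle=1$ by the third initial condition of Eq.~\ref{skein}. If there is a single crossing between $e_1$ and $e_3$ of sign $\epsilon_{1,3}$, the two smoothings in the skein relation yield (a) a single arc (trivial knotoid, one component) and (b) an arc disjoint union with a simple closed loop (two components); the $A^{\pm 1}$ assignment of the smoothings is dictated by the sign of the crossing. A direct evaluation via Eq.~\ref{bracketformula} gives
\begin{equation*}
\langle K\rangle \;=\; A^{\epsilon_{1,3}}\cdot d^{0} + A^{-\epsilon_{1,3}}\cdot d^{1}
\;=\; A^{\epsilon_{1,3}} + A^{-\epsilon_{1,3}}(-A^{2}-A^{-2}) \;=\; -A^{-3\epsilon_{1,3}}.
\end{equation*}
Equivalently, this is exactly a single Reidemeister~$\Omega_1$ curl applied to the trivial knotoid, so the bracket equals $(-A^{3})^{-\epsilon_{1,3}}$. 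I would then double-check that my sign conventions are consistent with those implicit in the statement, since the proposition writes $(-A^{3})^{\epsilon_{1,3}}$; this is merely a matter of which smoothing one calls ``$A$" and corresponds to a fixed convention for $\epsilon_{1,3}$.

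Finally I would invoke the standard fact (used above for $P_4$, and ultimately coming from Banchoff's interpretation in Section~\ref{Gaussfinite}) that the fraction of the sphere $S^{2}$ of projection directions for which the projections of two non-adjacent edges $e_1,e_3$ cross equals $2|L(e_1,e_3)|$. Plugging into Eq.~\ref{avk2} with only two knotoid types contributing,
\begin{equation*}
\langle E_3\rangle \;=\; 2|L(e_1,e_3)|\,(-A^{3})^{\epsilon_{1,3}} + \bigl(1-2|L(e_1,e_3)|\bigr),
\end{equation*}
which is the stated formula. The only genuine point of care is the bracket-of-a-single-crossing-knotoid computation and keeping the sign convention straight; everything else is a direct specialization of the argument already carried out for $P_4$ in Section~\ref{bracketclosed}.
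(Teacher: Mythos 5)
Your proposal takes exactly the route of the paper's own (very short) proof: in a generic projection only the non-adjacent pair $e_1,e_3$ can cross, so the diagram has zero or one crossings; the zero-crossing diagram has bracket $1$, the one-crossing diagram is an $\Omega_1$ kink on the trivial knotoid, the crossing probability is $2|L(e_1,e_3)|$, and averaging via Eq.~\ref{avk2} gives the formula. The one step you should not wave off as ``a matter of convention'' is the evaluation of the one-crossing diagram. You assign $A^{\epsilon_{1,3}}$ to the smoothing that reconnects to a single arc (the $d^{0}$ state), which gives $A^{\epsilon_{1,3}}+A^{-\epsilon_{1,3}}d=-A^{-3\epsilon_{1,3}}=(-A^{3})^{-\epsilon_{1,3}}$, the reciprocal of the claimed exponent. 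The convention is not free here: the paper already commits to the normalization $f_K=(-A^{3})^{-wr(K)}\langle K\rangle$ being $\Omega_1$-invariant, which forces a kink of sign $\epsilon$ to contribute the factor $(-A^{3})^{\epsilon}$ to the bracket; equivalently, under the standard labeling the $A$-smoothing of a positive kink is the one that splits off the extra circle (the $d^{1}$ state), not the one returning a single arc. Swapping the two labels in your display gives $A^{\epsilon_{1,3}}d+A^{-\epsilon_{1,3}}=-A^{3\epsilon_{1,3}}=(-A^{3})^{\epsilon_{1,3}}$, which is the statement. With that correction the argument is complete and coincides with the paper's.
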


\begin{proof} Consider a polygonal chain of 3 edges $e_1,e_2,e_3$, $(E_3)$. Then in a projection of $E_3$, $(E_3)_{\xi}$, one either sees no crossings, so $\langle (E_3)_{\xi}\rangle=1$, or there is a crossing between $e_1$ and $e_3$, in which case $\langle (E_3)_{\xi}\rangle=-A^{\epsilon_{1,3}}$, thus

\begin{equation}
\begin{split}
&\langle E_3\rangle= P(K((E_3)_{\xi})=k0,wr((E_3)_{\xi})=0)+P(K((E_3)_{\xi})=k0,wr((E_4)_{\xi})=\epsilon_{1,3})(-A^3)^{\epsilon_{1,3}}\nonumber\\
&=(1-2|L(e_1,e_3)|)+2|L(e_1,e_3)|(-A^3)^{\epsilon_{1,3}}\nonumber
\end{split}
\end{equation}

\end{proof}

Let $E_4$ be composed by 4 edges, $e_1,e_2.e_3,e_4$, connecting the vertices $(0,1),(1,2),(2,3),(3,4)$, respectively.

\begin{proposition}\label{number4} $k(4)=2$ (There are only two different knotoids that can be realized by a 3-dimensional polygonal chain with 4 edges). 
\end{proposition}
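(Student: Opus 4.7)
The plan is to proceed by a direct case analysis on which pairs of edges of $E_4$ can cross in a generic projection. Since two consecutive edges $e_i, e_{i+1}$ share a vertex, they cannot cross generically, so the only pairs that can contribute crossings are $\{e_1,e_3\}$, $\{e_1,e_4\}$, and $\{e_2,e_4\}$. In particular every generic projection produces a knotoid diagram with at most three crossings, and the combinatorial type of the diagram is determined by the subset of these pairs that actually cross (together with the over/under information, which is fixed by the chain once a projection direction is chosen).

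First I would dispose of the zero- and one-crossing cases. If there are no crossings the diagram is manifestly the trivial knotoid $k_0$. If exactly one of the three pairs crosses, then along the chain the crossing is flanked by a subarc that contains no other crossing and neither endpoint of $E_4$ (explicitly, $e_2$ if the crossing is $e_1\cap e_3$, $e_2\cup e_3$ if it is $e_1\cap e_4$, and $e_3$ if it is $e_2\cap e_4$); this subarc bounds a small disk in the diagram whose only intersection with the rest of the diagram is at the single crossing, so an $\Omega_1$ (Reidemeister I) move removes the crossing and we again land in $k_0$.

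Next I would treat the two-crossing cases. The subcase $\{e_1\cap e_3,\ e_2\cap e_4\}$ is (combinatorially) the standard two-crossing diagram of the knotoid $k_{2.1}$, and the signs of the crossings are forced to be opposite as in the closed-chain argument of Proposition~\ref{number4}, so the result is $k_{2.1}$ (up to mirror). In the two remaining subcases the two crossings share the common strand $e_1$ or $e_4$; using that the other two strands meet at a common vertex ($v_3$ or $v_1$), an $\Omega_2$ (Reidemeister II) move applies when the two crossings have opposite signs, reducing to $k_0$, while equal signs produce a diagram that simplifies by $\Omega_1$ to $k_{2.1}$. Finally for the three-crossing case, I would pick the pair of crossings lying on a common edge and combine an $\Omega_3$ with an $\Omega_2$ to cancel that pair, reducing to one of the lower-crossing cases already handled.

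Once the upper bound is established, the lower bound is immediate: a straight polygonal chain realizes $k_0$, and the chain $I(t_2)$ of Example~1 realizes $k_{2.1}$, so both knotoid types actually occur. I expect the main obstacle to be ruling out, in the three-crossing case, a minimal three-crossing knotoid such as $k_{3.1}$ (or a trefoil-like diagram); this is where the geometric constraint from the chain having only four edges must be used in an essential way, either by an ad hoc check that the required $\Omega_2/\Omega_3$ simplification is always available, or by invoking a stick-number style bound in the spirit of Calvo~\cite{Calvo2001} to guarantee that four edges are insufficient to realize any knotoid whose minimal diagram has three or more crossings.
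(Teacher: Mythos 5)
Your case decomposition (by which of the three admissible pairs $\{e_1,e_3\}$, $\{e_1,e_4\}$, $\{e_2,e_4\}$ cross in a projection) is the same skeleton the paper uses, and your handling of the zero- and one-crossing cases agrees with it. But there are two genuine problems. First, you treat the two-crossing subcase $\{e_1\cap e_3,\ e_2\cap e_4\}$ as a realizable $k2.1$ diagram with opposite-sign crossings. The paper proves this subcase cannot occur at all: if the projections of $e_1$ and $e_3$ cross, the line carrying the projection of $e_1$ separates the projected vertices $2$ and $3$, hence separates $e_2$ from the initial point of $e_4$, so $e_4$ can only reach $e_2$ by also crossing $e_1$ --- one is forced into the three-crossing case. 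Your sign claim for that subcase is also internally inconsistent: a diagram with interleaved Gauss code $c_1c_2c_1c_2$ and \emph{opposite} signs reduces by $\Omega_2$ to the trivial knotoid, so opposite signs cannot yield $k2.1$; equal signs are needed, exactly as in the cases where the two crossings share the edge $e_1$ or $e_4$. (Relatedly, in those shared-edge cases the equal-sign interleaved diagram already \emph{is} $k2.1$; it does not ``simplify by $\Omega_1$ to $k2.1$,'' since $\Omega_1$ would drop the crossing number below two.)

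Second, and more seriously, the three-crossing case is left unproved. You correctly identify it as the crux --- a diagram with Gauss code $c_1c_2c_3c_1c_2c_3$ could a priori be a trefoil-pattern knotoid --- but you only sketch a hoped-for $\Omega_2/\Omega_3$ cancellation and then concede it may fail, deferring to ``an ad hoc check'' or an unspecified stick-number bound. This is precisely where the geometric rigidity of a four-edge chain must enter: the over/under data and the crossing signs $\epsilon_{i,j}$ are not free but are constrained by the shared vertices of consecutive edges (e.g.\ when $e_1,e_3$ and $e_2,e_4$ both cross one has $\epsilon_{2,4}=-\epsilon_{1,3}$), and it is these constraints that force case C to be realizable only when trivial. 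Without closing this case the upper bound $k(4)\le 2$ is not established; the realizability of $k0$ and $k2.1$, which you do supply, gives only the lower bound.
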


\begin{proof} In a projection of $E_4$, crossings may occur only between the projections of the pairs of edges: $e_1,e_3$, $e_1,e_4$ and $e_2,e_4$. Therefore we have the following 5 possible combinations (see Figure \ref{fig:4knotoids}):

\noindent case A: one crossing, between the projections of:  $e_1,e_3$ or  $e_1,e_4$ or   $e_2,e_4$ 

\noindent case B: two crossings, between the projections of: $e_1,e_3$ and $e_1,e_4$ (giving two possible diagrams, $i$ and $i\prime$) or  $e_2,e_4$ and $e_1,e_4$ (giving two possible diagrams, $ii$ and $ii\prime$) or  $e_1,e_3$ and $e_2,e_4$ (not realizable, see below)

\noindent case C: three crossings, between the projections of: $e_1,e_3$ and $e_1,e_4$ and $e_2,e_4$.

The case B with $e_1,e_3$ and $e_2,e_4$ crossings is not realizable: The projection of $e_1$ defines a line in the plane that divides it in two regions. Suppose that the projection of $e_3$ intersects $e_1$. Then the endpoints of $e_3$ lie in opposite regions and are the endpoint and the starting point of $e_2$ and $e_4$, respectively. Thus the starting point of $e_4$ is in the opposite region of the one where $e_2$ lies in and to intersect $e_2$ it must also intersect $e_1$.

\begin{figure}[H]
   \begin{center}
     \includegraphics[width=0.8\textwidth]{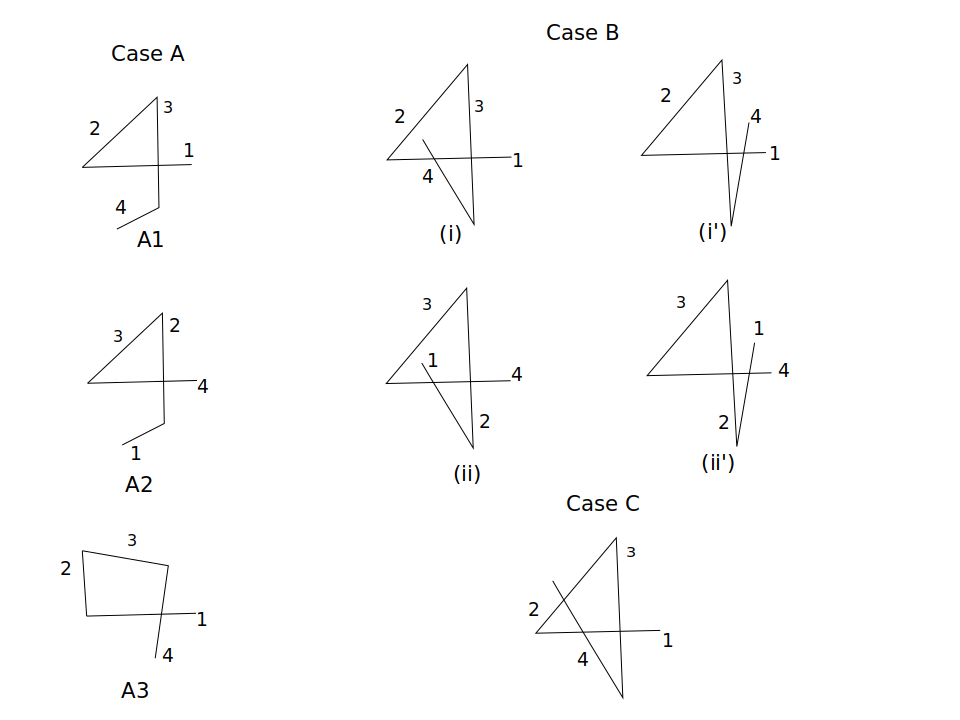}
     \caption{Possible diagrams of a projection of a polygonal curve with 4 edges, $e_1,e_2,e_3,e_4$. Each crossing may be over or under, except for case C, where constraints apply due to the chain rigidity (see proof of Proposition \ref{number4}). Only case B (i) and (ii) can give a non-trivial knotoid when both crossings have the same sign.  Therefore, if $(E_4)_{\xi}$ is non-trivial, it is of only one type: $k2.1$.  }
     \label{fig:4knotoids}
   \end{center}
\end{figure}

Figure \ref{fig:4knotoids} shows the different cases of diagrams with undefined over or under crossings which give rise to realizable knotoids. The diagrams of case A and Case B (i') and (ii') are all trivial and case C is realizable only when it is trivial. The diagrams of case B (i) and (ii) are non-trivial (in $S^2$) only when the crossings between the involved edges have the same sign, ie. $\epsilon_{1,3}=\epsilon_{1,4}$ or $\epsilon_{1,4}=\epsilon_{2,4}$, resp., in which case, they both represent the knotoid $k2.1$ \cite{Gugumcu2017}.

\end{proof}

The next proposition shows that when the projection of $E_4$ is of type $k2.1$, it can be only one of the two possible k2.1 diagrams (case B (i) or (ii)) in any projection direction.

\begin{proposition}\label{case}
Let $E_4$ denote a polygonal chain of 4 edges in 3 space. If there is $\vec{\xi}_1$ such that $(E_4)_{\vec{\xi}_1}=$ case $B (i)$, then there does not exist $\vec{\xi}\in S^2$, $\vec{\xi}\neq\vec{\xi}_1$ such that $(E_4)_{\vec{\xi}}=B(ii)$ (and vice-versa). 
\end{proposition}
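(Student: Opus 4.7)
The plan is to argue by contradiction: suppose there exist directions $\vec{\xi}_1$ giving case B(i) and $\vec{\xi}_2$ giving case B(ii). Both must lie in the region $R_{14}\subset S^2$ of projection directions for which $e_1$ and $e_4$ cross in projection, since that crossing is common to both cases. Within a connected component of $R_{14}$, the projected vertices $\pi(p_0),\pi(p_1),\pi(p_3),\pi(p_4)$ form a convex quadrilateral with $\pi(e_1),\pi(e_4)$ as diagonals, and the two lines $\pi(L_1),\pi(L_4)$ divide the projection plane into four open quadrants; B(i) and B(ii) will correspond to $\pi_{\vec{\xi}}(p_2)$ lying in two specific, diagonally opposite quadrants (the remaining two quadrants giving cases A and C respectively).

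First I would translate cases B(i) and B(ii) into sign conditions on the signed triangle areas $[abc]_{\vec{\xi}}:=\vec{\xi}\cdot((p_b-p_a)\times(p_c-p_a))$, using the standard fact that two segments $p_ip_{i+1}$ and $p_jp_{j+1}$ cross in projection iff $[i(i{+}1)j]_{\vec{\xi}}\,[i(i{+}1)(j{+}1)]_{\vec{\xi}}<0$ and $[j(j{+}1)i]_{\vec{\xi}}\,[j(j{+}1)(i{+}1)]_{\vec{\xi}}<0$. The signs of $[013]_{\vec{\xi}},[014]_{\vec{\xi}},[034]_{\vec{\xi}},[134]_{\vec{\xi}}$ are each constant on the connected component of $R_{14}$ (they are precisely what defines $R_{14}$). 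Against this fixed background, B(i) at $\vec{\xi}_1$ reduces to a specific sign on $[012]$ (from $e_1\cap e_3$) together with a disjunctive clause expressing $\lnot(e_2\cap e_4)$, and B(ii) at $\vec{\xi}_2$ reduces to a sign on $[234]$ (from $e_2\cap e_4$) together with a disjunctive clause expressing $\lnot(e_1\cap e_3)$; crucially, these sign conditions are forced opposite on the corresponding quantities.

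The algebraic engine of the argument is the family of four-point identities $[abc]_{\vec{\xi}}+[acd]_{\vec{\xi}}=[abd]_{\vec{\xi}}+[bcd]_{\vec{\xi}}$, valid for any four points $p_a,p_b,p_c,p_d$ and any $\vec{\xi}$, which follow from the pointwise vector identity $V_{abc}+V_{acd}=V_{abd}+V_{bcd}$ on the area-normal vectors $V_{abc}=(p_b-p_a)\times(p_c-p_a)$. Applied to the 4-subsets $(0,1,2,3), (0,1,3,4), (0,2,3,4), (1,2,3,4)$ and $(0,1,2,4)$, these identities couple the signs of the free quantities $[012],[023],[024],[123],[124],[234]$ at $\vec{\xi}_1$ and $\vec{\xi}_2$ to the already-fixed signs of $[013],[014],[034],[134]$ on $R_{14}$. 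The final step is to derive a contradiction by showing that the sign patterns imposed by B(i) and B(ii) cannot simultaneously satisfy these linear identities.

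The main obstacle is the case analysis forced by the two disjunctive non-crossing clauses: $\lnot(e_2\cap e_4)$ at $\vec{\xi}_1$ requires $[123][124]>0$ or $[134][234]>0$, and $\lnot(e_1\cap e_3)$ at $\vec{\xi}_2$ requires $[012][013]>0$ or $[023][123]>0$, giving four sub-cases to dispose of. For each sub-case, the plan is to select the right combination of two or three of the Ptolemy-type identities above and chase the forced sign propagation to a contradiction with the fixed signs of $[013],[014],[034],[134]$ on $R_{14}$; geometrically, this amounts to showing that the fixed 3D position of $p_2$ relative to the tetrahedron $p_0p_1p_3p_4$ prevents $\pi_{\vec{\xi}}(p_2)$ from visiting both of the diagonally opposite quadrants as $\vec{\xi}$ ranges over $R_{14}$. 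By the symmetry that swaps B(i) and B(ii) under the chain reversal $p_i\mapsto p_{4-i}$, the "vice-versa" part of the statement then follows without additional work.
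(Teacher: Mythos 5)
Your setup is reasonable and genuinely different from the paper's argument: the paper disposes of this in two lines by attaching to each of B(i) and B(ii) an opposite sign of the single \emph{fixed} three--dimensional quantity $(e_3\times e_4)\cdot e_1$, whereas you work entirely with the projection--dependent signed areas $[abc]_{\vec{\xi}}$ and try to propagate signs through the universal four--point identities. The difficulty is that the decisive step of your argument --- ``select the right combination of two or three of the Ptolemy-type identities and chase the forced sign propagation to a contradiction'' in each of the four sub-cases --- is exactly the mathematical content of the proposition, and it is nowhere carried out. Worse, there is a structural reason to doubt it can be carried out with the tools you allow yourself. The cocycle identities $V_{abc}+V_{acd}=V_{abd}+V_{bcd}$ hold for \emph{any} five points in \emph{any} ambient space and for any linear functional in place of $\vec{\xi}\cdot(\,\cdot\,)$; they are blind to the specific embedding. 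For five points in $\mathbb{R}^3$ the ten vectors $V_{abc}$ satisfy additional, configuration-dependent linear and quadratic (Grassmann--Pl\"ucker) relations, and it is precisely in those that the chirality datum $\mathrm{sign}\bigl((e_3\times e_4)\cdot e_1\bigr)$ lives --- the datum on which the paper's proof turns. Your only configuration-specific input is the four fixed signs of $[013],[014],[034],[134]$ on a component of $R_{14}$, and you have not shown (nor is it clear) that these, plus the universal identities, pin down that chirality. So the sign chase may simply fail to close.

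Two further points need repair even if the chase does close. First, your translation of ``case B(i)'' into sign conditions is underspecified: B(i) and B(i$'$) have the \emph{same} crossing set $\{e_1e_3,\,e_1e_4\}$ and are distinguished by the \emph{order} of the two crossing points along $e_1$; a ``specific sign on $[012]$'' does not encode that. (The order can be recovered, but only via the sign of $[234]$ together with the fixed background, or via the sign of a quadratic expression such as $[023][134]-[034][123]$ --- you must make this explicit, since it is what separates the knotoid-producing diagram from the trivial one.) Relatedly, the quadrant of $\pi_{\vec{\xi}}(p_2)$ does not by itself determine which of the cases A, B(i), B(i$'$), B(ii), B(ii$'$), C occurs, because the segments are bounded: whether, say, $e_3$ meets the \emph{segment} $e_1$ rather than merely the line through it involves the signs of $[023]$ and $[123]$ as well. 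Second, you should say why $\vec{\xi}_1$ and $\vec{\xi}_2$ may be assumed to lie in the \emph{same} connected component of $R_{14}$ (replace $\vec{\xi}_2$ by $-\vec{\xi}_2$ if necessary, since antipodal directions give the same diagram); otherwise your ``fixed background signs'' are not common to the two directions. As it stands the proposal is a plausible framework with the essential verification deferred, and I would not accept it as a proof.
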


\begin{proof}

Without loss of generality, suppose that $\epsilon_{1,3}=\epsilon_{1,4}=1$ and that there exists $\xi_1$ such that $(E_4)_{\xi_1}$ is of the form (i). Then, $(e_3\times e_4)\cdot e_1>0$. Suppose that there is $\xi_1$ such that $(E_4)_{\xi_1}$ is of the form (ii). Then $(e_3\times e_4)\cdot e_1<0$, contradiction.
\end{proof}

Let $E_4$ denote a polygonal chain of 4 edges. Then, by Propositions \ref{number4} and \ref{case}, the only non-trivial bracket polynomial is $k2.1$ and the writhe of the diagram is either 2 or -2. All the possible writhe values in a $k0$ (trivial knotoid) diagram of $E_4$ can be determined by inspection of  all the possible diagrams of a chain of 4 edges, given in Proposition \ref{number4}. Let us denote these diagrams as $k0_{A_1},k0_{A_2},k0_{A_3},k0_{B_i},k0_{B_{i\prime}},k0_{B_{ii}},k0_{B_{ii\prime}},k0_C$. 
Let us denote by $wr$ the writhe of a diagram. Then one can see that $wr(k0_{A_1})=\pm1,wr(k0_{A_2})=\pm1,wr(k0_{A_3})=\pm1,wr(k0_{B_i})=0$ or $=\pm2$,$wr(k0_{B_{i\prime}})=0$ or $\pm2$, $wr(k0_{B_{ii}})=0$ or $\pm2$, $wr(k0_{B_{ii\prime}})=0$ or $\pm2$,$wr(k0_C)=\pm1$. Thus the bracket polynomial of $E_4$ has the following form:

\begin{equation}
    \begin{split}
&\langle E_4\rangle= P(K(E_4)_{\xi})=k2.1)\langle k2.1\rangle+\sum_{j=-2}^2P(K((E_4)_{\xi})=k0,wr((E_4)_{\xi})=j)(-A^3)^{j}\\ 
&= P(K((E_4)_{\xi})=k2.1)(A^2-A^{-4}+1)+\sum_{j=-2}^2P(K((E_4)_{\xi})=k0,wr((E_4)_{\xi})=j)(-A^3)^{j}\nonumber
\end{split}
\end{equation}

\noindent where  $P(K((E_4)_{\xi})=k2.1)$ denotes the geometruc probability that a projection of $E_4$ gives the non-trivial knotoid $k2.1$ and where $P(K((E_4)_{\xi})=k0,wr((E_4)_{\xi})=j)$ denotes the probability of obtaining a diagram of the trivial knotoid with writhe $j$. 

The rest of this section is focused on obtaining finite forms for these probabilities. More precisely, a finite form for $P(K((E_4)_{\xi})=k2.1)$ is derived in Theorem \ref{knotoid4} and a finite form for all $P(K((E_4)_{\xi})=k0,wr((E_4)_{\xi})=j)$ is derived in Theorem \ref{fullbracket}.

In the following definition we gather some of the notation used so far, together with some new definitions, necessary for the rest of the manuscript.

\begin{definition}\label{defn1}
Throughout this manuscript, we will denote by $Q_{i,j}$ the spherical polygon which corresponds to projections where the edges $e_i,e_j$ cross. $Q_{i,j}^A$ is the antipodal of $Q_{i,j}$ on the sphere. $Q_{i,j,k}$ is the  spherical polygon which corresponds to projections where the edges $e_i,e_j$ and $e_i,e_k$ cross, it is equal ro $Q_{i,j,k}=(Q_{i,j}\cap Q_{i,k})\cup(Q_{i,j}^A\cap Q_{i,k})$. $Q_{i,j,k}^A$ is the antipodal of $Q_{i,j,k}$ on the sphere. We denote $(\vec{w}_1,\dotsc,\vec{w}_k)$ the spherical polygon formed by the intersection of great circles with normal vectors $\vec{w}_1,\dotsc,\vec{w}_k$ in the counterclockwise orientation. $A(Q_{i,j})$, $A(Q_{i,j,k})$ and  $A(\vec{w}_1,\dotsc,\vec{w}_k)$  denote the area of $Q_{i,j}$, the area of $Q_{i,j,k}$ and the area of $(\vec{w}_1,\dotsc,\vec{w}_k)$, respectively. We denote by $T_{i,j}$, the quadrilateral in 3-space that is formed by joining the vertices of the edge $e_i$ with the vertices of the edge $e_j$.  The normal vectors of $T_{i,j}$, denoted $\vec{n}_1,\vec{n}_2,\vec{n}_3,\vec{n}_4$, are normal vectors to the great circles that bound $Q_{i,j}$ and are determined by the algorithm described in Section \ref{Gaussfinite} when $i<j$. We define  the spherical faces of the quadrangles from the quadrilateral as follows: at each vertex of the quadrilateral extend each edge by length 1 and connect those segments that share a common vertex by an arc on the unit sphere (see Figure \ref{fig:qk21} for an illustrative example).  We call the spherical faces at the vertex $i$ and $i+1$, (corresponding to the vectors $\vec{n}_1,\vec{n}_3$), the left and right faces of $T_{i,j}$ and the spherical faces at $j$ and $j+1$ (corresponding to $\vec{n}_2$ and $\vec{n}_4$), the top and bottom faces. One pair bounds $Q_{i,j}$ and the other bounds $Q_{i,j}^A$, but the reflections of these spherical faces through the center of the sphere create both quadrangles. We will say that $T_{i,j}$ \textit{generates} the quadrangle that contains the pair of right and left spherical faces of $T_{i,j}$ (the spherical faces at $i$ and $i+1$, respectively). We notice that in a quadrangle generated by a quadrilateral $T_{i,j}$ the vectors either point inward or outward the quadrangle and their numbering either follows a counterclockwise or clockwise orientation on $Q_{i,j}$, depending on the sign of $\epsilon_{i,j}$. If the normal vectors of $Q_{i,j}$ point inwards (outwards resp.) then those of $Q_{i,j}^A$ point outwards (inwards resp.) and with the opposite numbering sequence (clockwise/counterclockwise). We call the antipodal quadrilateral of $T_{i,j}$, we denote $T_{i,j}^A$, the quadrilateral which generates $Q_{i,j}^A$. We denote its normal vectors as $\vec{n}_1^A, \vec{n}_2^A,\vec{n}_3^A,\vec{n}_4^A$

\end{definition}

\begin{lemma}\label{antipodal}
Let $T_{i,j}$ denote the quadrilateral formed by $e_i,e_j$ with vertices at the points $\vec{p}_i,\vec{p}_{i+1},\vec{p}_j,\vec{p}_{j+1}$. The antipodal of $T_{i,j}$, $T_{i,j}^A$, is the tetrahedral formed by the edge $e_i$ and the edge $e_j^A$, with vertices $\vec{p}_{j^A}=\vec{p}_{i+1}-(\vec{p}_{j}-\vec{p}_i)$ and $\vec{p}_{(j+1)^A}=\vec{p}_{i+1}-(\vec{p}_{j+1}-\vec{p}_i)$.
\end{lemma}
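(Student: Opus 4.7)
The plan is to identify the spherical quadrangle generated by $T_{i,j}^A$ by applying the algorithm of Section \ref{Gaussfinite} directly to its vertices, showing that the resulting normal vectors agree, up to antipody and a reversal of cyclic order, with the normals $\vec{n}_1,\vec{n}_2,\vec{n}_3,\vec{n}_4$ of $T_{i,j}$. Since each great circle on $S^2$ is fixed by replacing its normal with its antipode, the two quadrilaterals must then bound the same four great circles; an orientation check will force the new region to be $Q_{i,j}^A$ rather than $Q_{i,j}$ itself.

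First I would record the geometric content of the defining formulas: the identities $\vec{p}_{j^A}=\vec{p}_i+\vec{p}_{i+1}-\vec{p}_j$ and $\vec{p}_{(j+1)^A}=\vec{p}_i+\vec{p}_{i+1}-\vec{p}_{j+1}$ exhibit $T_{i,j}^A$ as the image of $T_{i,j}$ under the central symmetry about the midpoint of $e_i$. This symmetry fixes $e_i$ as a set (while swapping its endpoints) and sends $e_j$ to a new edge $e_j^A$, so $e_i$ is still a common edge of $T_{i,j}^A$, as asserted. Relabeling by $\vec{q}_i=\vec{p}_i$, $\vec{q}_{i+1}=\vec{p}_{i+1}$, $\vec{q}_j=\vec{p}_{j^A}$, $\vec{q}_{j+1}=\vec{p}_{(j+1)^A}$ and substituting into the difference vectors $\vec{s}_{a,b}=\vec{q}_a-\vec{q}_b$, a direct cancellation yields
\begin{equation*}
\vec{s}_{i,j}=-\vec{r}_{i+1,j},\ \vec{s}_{i,j+1}=-\vec{r}_{i+1,j+1},\ \vec{s}_{i+1,j}=-\vec{r}_{i,j},\ \vec{s}_{i+1,j+1}=-\vec{r}_{i,j+1}.
\end{equation*}
Plugging these into the Klenin normal-vector formulas and exploiting $(-\vec{u})\times(-\vec{v})=\vec{u}\times\vec{v}$ produces $\vec{n}_1^A=-\vec{n}_3$, $\vec{n}_2^A=-\vec{n}_2$, $\vec{n}_3^A=-\vec{n}_1$, $\vec{n}_4^A=-\vec{n}_4$, so the four great circles bounding the quadrangle generated by $T_{i,j}^A$ agree, as unoriented circles, with those bounding $Q_{i,j}$.

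The main obstacle is to show that the region enclosed by these circles is actually the antipodal quadrangle, rather than $Q_{i,j}$ itself. Here I would appeal to the discussion at the end of Definition \ref{defn1}, which distinguishes $Q_{i,j}$ from $Q_{i,j}^A$ by the inward-versus-outward orientation of the bounding normals together with the clockwise-versus-counterclockwise cyclic order in which they are listed. Since every $\vec{n}^A_k$ is the antipode of an original $\vec{n}_{\pi(k)}$, the inward/outward convention is reversed; and since the induced permutation $\pi=(1\,3)$ (with $\pi(2)=2$, $\pi(4)=4$) reverses the cyclic order around the quadrangle, the clockwise/counterclockwise convention also flips. Both flips occurring simultaneously is exactly the condition in Definition \ref{defn1} for passing from $Q_{i,j}$ to its antipode on the opposite hemisphere, identifying the region generated by $T_{i,j}^A$ with $Q_{i,j}^A$ and completing the proof.
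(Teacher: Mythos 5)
Your proposal is correct and follows essentially the same route as the paper: both compute the difference vectors for the shifted vertices, substitute into the normal-vector formulas of Section \ref{Gaussfinite}, and conclude $\vec{n}_1^A=-\vec{n}_3$, $\vec{n}_2^A=-\vec{n}_2$, $\vec{n}_3^A=-\vec{n}_1$, $\vec{n}_4^A=-\vec{n}_4$, then identify the resulting region with $Q_{i,j}^A$ via the inward/outward and cyclic-order conventions of Definition \ref{defn1}. Your added observation that $T_{i,j}^A$ is the point reflection of $T_{i,j}$ through the midpoint of $e_i$ is a pleasant conceptual gloss (and in fact corrects a minor algebraic slip in the paper's displayed computation of $\vec{r}_{i,(j+1)^A}$), but it does not change the structure of the argument.
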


\begin{proof}
Let $\vec{n}_1,\vec{n}_2,\vec{n}_3,\vec{n}_4$ denote the normal vectors to the faces of $T_{i,j}$ and let  $\vec{n}_1^A,\vec{n}_2^A,\vec{n}_3^A,\vec{n}_4^A$ denote the normal vectors of $T_{i,j}^A$. Without loss of generality, suppose that $\vec{n}_1,\vec{n}_2,\vec{n}_3,\vec{n}_4$ all point inwards $Q_{i,j}$, numbered with the counterclockwise orientation. Then the antipodal, $Q_{i,j}^A$ has the same normal vectors but point outwards numbered with the clockwise orientation. The left and the right faces of $Q_{i,j}$ have normal vectors $\vec{n}_1$ and $\vec{n}_3$. Since $Q_{i,j}^A$ is a reflection of $Q_{i,j}$ through the center of the sphere, the normal vectors to $Q_{i,j}^A$ must be related to the normal vectors of $Q_{i,j}$ as follows: $\vec{n}_1^A=-\vec{n}_3, \vec{n}_2^A=-\vec{n}_2,\vec{n}_3^A,=-\vec{n}_1, \vec{n}_4^A=-\vec{n}_4$.

We will examine if the normal vectors defined by $T_{i,j}^A$ satisfy these relations.
Notice that by definition

\begin{equation}
\vec{n}_1^A=\frac{\vec{r}_{i,j^A}\times \vec{r}_{i,(j+1)^A}}{||\vec{r}_{ij^A}\times \vec{r}_{i,(j+1)^A}||}\nonumber
\end{equation}

\noindent where $\vec{r}_{ij^A}=\vec{p}_i-\vec{p}_j^A=\vec{p}_i-\vec{p}_{i+1}+(\vec{p}_{j}-\vec{p}_i)=\vec{p}_j-\vec{p}_{i+1}=-\vec{r}_{i+1,j}$, $\vec{r}_{i,(j+1)^A}=\vec{p}_i-\vec{p}_{j+1}^A=\vec{p}_i-\vec{p}_{i+1}+(\vec{p}_{j+1}-\vec{p}_i)=\vec{p}_{j+1}-\vec{p}_i=-\vec{r}_{i,j+1}$.
Thus, $\vec{n}_1^A=-\vec{n}_3$.

Similarly, one can verify that the normal vectors of $T_{i,j}^A$, $\vec{n}_1^A,\vec{n}_2^A,\vec{n}_3^A,\vec{n}_4^A$ satisfy:  $\vec{n}_1^A=-\vec{n}_3$, $\vec{n}_2^A=-\vec{n}_2$, $\vec{n}_3^A=-\vec{n}_1$ and $\vec{n}_4^A=-\vec{n}_4$.

\end{proof}

The following theorem determines the probability that three edges, two of which are consecutive, cross in a projection direction.

\begin{figure}
   \begin{center}
\includegraphics[width=0.7\textwidth]{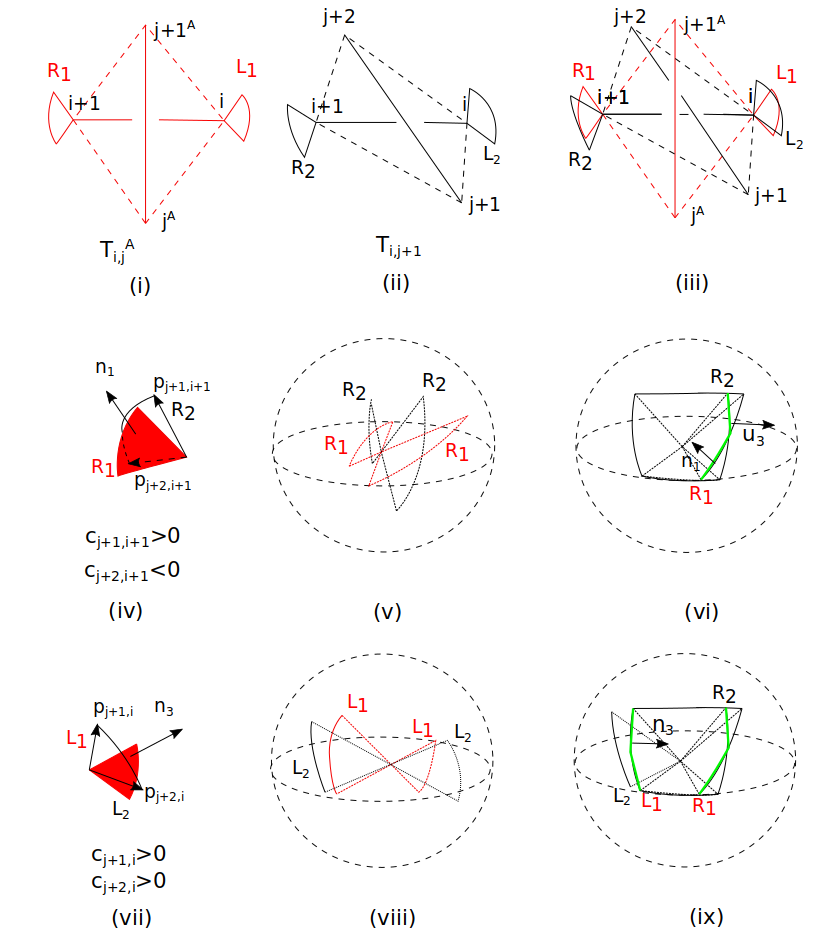}
     \caption{The quadrangle $Q_{i,j,j+1}=(Q_{i,j}\cap Q_{i,j+1})\cup(Q_{i,j}^A\cap Q_{i,j+1})$ contains the vectors that define projections of $e_i,e_j$ and $e_i,e_{j+1}$ both intersect. This Figure shows the procedure for determining $Q_{i,j}^A\cap Q_{i,j+1}$ in the case where $\epsilon_{i,j}=\epsilon_{i,j+1}$. $Q_{i,j}^A\cap Q_{i,j+1}$ is bounded by the great circles defined by the intersection of the faces of the quadrilaterals $T_{i,j}^A$ and $T_{i,j+1}$.(i) The quadrilateral $T_{i,j}^A$ (ii) The quadrilateral $T_{i,j+1}$ (iii) The relative positions of $T_{i,j}^A$ and $T_{i,j+1}$. (iv-ix) At the vertices $i,i+1$, we can define the left and right spherical faces of $Q_{i,j}^A$ and $Q_{i,j+1}$. To find the left and right faces of , we examine the intersection of the spherical faces at $i$ and at $i+1$ (see Definition \ref{defn1}). Let $\vec{p}_{j+1,i+1},\vec{p}_{j+2,i+1}$ be the vectors that connect vertex $j+1$ and vertex $j+2$ to $i+1$. In this example, $c_{j+1,i+1}=(\vec{p}_{j+1,i+1}\cdot\vec{n}_1)\epsilon_{i,j}>0$ and $c_{j+2,i+1}=(\vec{p}_{j+2,i+1}\cdot\vec{n}_1)\epsilon_{i,j}<0$ and the spherical faces $R_1$, $R_2$ intersect and they both bound $Q_1$. Similarly, in this example, $c_{j+1,i}=\vec{p}_{j+1,i}\cdot\vec{n}_3>0$ and $c_{j+2,i}=\vec{p}_{j+2,i}\cdot\vec{n}_3>0$ and only the spherical face $L_1$ bounds $Q_{i,j}^A\cap Q_{i,j+1}$ . }
     \label{fig:qk21}
   \end{center}
\end{figure}

\begin{theorem}\label{qijj+1}
Let $e_i,e_j,e_{j+1}$ denote three edges in 3-space. Then the joint probability of crossing between the projections of $e_i,e_j$ and $e_i,e_{j+1}$, is equal to $\frac{1}{2\pi}A(Q_{i,j,j+1})$, where $Q_{i,j,j+1}$ is given in Table \ref{tableqij1} and  Table \ref{tableqij2}.
\end{theorem}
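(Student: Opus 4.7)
The plan is to reduce the joint-probability statement to a computation of area on $S^{2}$ and then read off the explicit description of $Q_{i,j,j+1}$ from the geometry of the two quadrilaterals $T_{i,j}$ and $T_{i,j+1}$, which share the edge $e_i$.

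First I would observe that a generic projection in direction $\vec{\xi}\in S^{2}$ yields a crossing between the projections of $e_i$ and $e_j$ if and only if $\vec{\xi}\in Q_{i,j}\cup Q_{i,j}^{A}$. Consequently, the set of directions for which both $(e_i,e_j)$ and $(e_i,e_{j+1})$ cross is
\begin{equation}
\bigl(Q_{i,j}\cup Q_{i,j}^{A}\bigr)\cap\bigl(Q_{i,j+1}\cup Q_{i,j+1}^{A}\bigr),
\end{equation}
which decomposes (up to a null set) into the four pieces $Q_{i,j}\cap Q_{i,j+1}$, $Q_{i,j}^{A}\cap Q_{i,j+1}$, $Q_{i,j}\cap Q_{i,j+1}^{A}$, $Q_{i,j}^{A}\cap Q_{i,j+1}^{A}$. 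The antipodal involution on $S^{2}$ is area-preserving and pairs the first piece with the fourth and the second with the third, so the total area equals $2\bigl[A(Q_{i,j}\cap Q_{i,j+1})+A(Q_{i,j}^{A}\cap Q_{i,j+1})\bigr]=2A(Q_{i,j,j+1})$. Dividing by $4\pi$ yields the claimed joint probability $\tfrac{1}{2\pi}A(Q_{i,j,j+1})$.

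Second, to obtain the explicit expressions of Tables \ref{tableqij1}-\ref{tableqij2} I would identify the great circles that bound each of the two intersections. By construction $Q_{i,j}$ is bounded by the four arcs with normals $\vec n_{1},\vec n_{2},\vec n_{3},\vec n_{4}$ coming from the faces of $T_{i,j}$, and Lemma \ref{antipodal} furnishes the corresponding $\vec n^{A}_{k}$ for $T_{i,j}^{A}$; $Q_{i,j+1}$ is bounded analogously by arcs arising from $T_{i,j+1}$. Each of $Q_{i,j}\cap Q_{i,j+1}$ and $Q_{i,j}^{A}\cap Q_{i,j+1}$ is a convex spherical polygon cut out by a subset of these eight arcs. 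Because $e_{j}$ and $e_{j+1}$ share the vertex $\vec p_{j+1}$, the top face of $T_{i,j}$ and the bottom face of $T_{i,j+1}$ lie in the same plane, which eliminates one bounding arc and fixes the combinatorial type of the intersection up to finitely many cases. I would determine the surviving arcs by testing, at each of the vertices $\vec p_{i}$ and $\vec p_{i+1}$, the signs $c_{k,\ell}=(\vec p_{k,\ell}\cdot\vec n_{s})\,\epsilon_{i,j}$ depicted in Figure \ref{fig:qk21}, which record whether a spherical face of one quadrangle lies inside or outside the other. Case-splitting on $\epsilon_{i,j}=\pm\epsilon_{i,j+1}$ and on the four possible sign patterns of the $c_{k,\ell}$ produces the triangle/quadrangle/pentagon shapes whose areas, computed by the dihedral-angle formula of Section \ref{Gaussfinite} as sums of terms $\arcsin(\vec m_{k}\cdot\vec m_{k+1})$, populate the two tables.

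The main obstacle is the bookkeeping in the second step: one has to track the orientation (inward versus outward) of every normal vector on both the quadrangle and its antipodal, correctly invoke Lemma \ref{antipodal} to align the $\vec n^{A}_{k}$ with the $\vec n_{k}$ of the original quadrilateral, and keep straight which bounding arc is discarded when the two quadrilaterals meet at $\vec p_{j+1}$. Once a consistent sign convention is fixed and all sub-cases of the relative position of $T_{i,j}^{A}$ and $T_{i,j+1}$ are enumerated, each component of $Q_{i,j,j+1}$ becomes a spherical polygon with explicitly labelled vertices, and the entries of Tables \ref{tableqij1} and \ref{tableqij2} follow by direct application of the Banchoff-type area formula recalled in Section \ref{Gaussfinite}.
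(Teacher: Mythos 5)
Your proposal is correct and follows essentially the same route as the paper: identify the crossing directions with the quadrangles cut out by the faces of $T_{i,j}$, $T_{i,j}^{A}$ and $T_{i,j+1}$, exploit the shared face at $\vec p_{j+1}$ (collinearity of $\vec n_2$ and $\vec u_4$) to kill one of the two intersections, and then case-split on $\epsilon_{i,j}=\pm\epsilon_{i,j+1}$ and the signs $c_{k,\ell}$ to read off the bounding arcs in the tables. Your opening reduction of the joint probability to $\tfrac{1}{2\pi}A(Q_{i,j,j+1})$ via the four-piece decomposition and the area-preserving antipodal involution is actually spelled out more explicitly than in the paper, which takes that normalization for granted.
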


\begin{proof}

Let $T_{i,j}$ and $T_{i,j+1}$ be the two quadrilaterals formed by $e_i,e_j$ and $e_i,e_{j+1}$, where $e_i$ connects vertex $i$ to $i+1$, $e_j$ connect vertex $j$ to $j+1$ and vertex $j+1$ to $j+2$. Let $\vec{n}_1,\vec{n}_2,\vec{n}_3,\vec{n}_4$ denote the normal vectors to the faces of $T_{i,j}$ and $\vec{u}_1,\vec{u}_2,\vec{u}_3,\vec{u}_4$ denote the normal vectors to the faces of $T_{i,j+1}$. 
The normal vectors defined by the quadrilaterals define great circles which intersect to form the corresponding quadrangles. Each pair of great circles intersects at 2 antipodal points on the sphere, but due to the connectivity of the edges, there are also points where more than two great circles cross. These great circles correspond to faces of the tetrahedrals that share a common edge. The normal vectors to more than two great circles that intersect and their common edge on the tetrahedrals are shown in Table \ref{table0}. Due to the connectivity of the edges $e_j,e_{j+1}$, $T_{i,j}$ and $T_{i,j+1}$ share a common face, the one formed by the vertices $i,i+1,j+1$, which implies that the normal vectors $\vec{n}_2$ and $\vec{u}_4$ are collinear.  Thus, the great circles $Q_{i,j}$ and $Q_{i,j+1}$ or the great circles $Q_{i,j}^A$ and $Q_{i,j+1}$ share a common face, which implies that either $Q_{i,j}\cap Q_{i,j+1}=\emptyset$ or $Q_{i,j}^A\cap Q_{i,j+1}=\emptyset$, depending on the sign of $\epsilon_{i,j}$ and $\epsilon_{i,j+1}$.

\begin{table}[H]
\centering
\begin{tabular}{|l|l|}
\hline
\textbf{great circles}           & \textbf{common edge}\\
\hline
$\vec{n}_2,\vec{n}_4,\vec{u}_2$                   & $i,i+1$\\
$\vec{n}_1,\vec{n}_2,\vec{u}_1$                   & $i,j+1$\\
$\vec{n}_2,\vec{n}_3,\vec{u}_3$                   & $i+1,j+1$\\
\hline
\end{tabular}
\caption{Vectors perpendicular to great circles that contain a common edge.}
\label{table0}
\end{table}

Suppose $\epsilon_{i,j}=\epsilon_{i,j+1}$ (see Figure \ref{fig:eij} for an illustrative example). Then in order for the projections of $e_i,e_j,e_{j+1}$ to intersect, $e_i$ must pierce the triangle defined by $e_j,e_{j+1}$. To check this we examine the signs of $w_0=(\vec{v}_3\times(-\vec{n}_1))\cdot(\vec{v}_3\times\vec{n}_3)$ and $w=(\vec{u}_2\times (-\vec{n}_2))\cdot(\vec{u}_2\times \vec{n}_4)$, where $\vec{v}_3=\vec{p}_{i,j+2}\times\vec{p}_{j+1,j+2}$. If $w_0>0$, then $Q_{i,j,j+1}=\emptyset$. The faces with normal vectors $\vec{u}_2,\vec{n}_2,\vec{n}_4$ share a common edge and, if $(\vec{u}_2\times (-\vec{n}_2))\cdot(\vec{u}_2\times \vec{n}_4)>0$, then both $\vec{n}_2$ and $\vec{n}_4$  do not intersect $T_{i,j+1}$, so $A(Q_{i,j,j+1})=0$ (see Figure \ref{fig:eij}).  Suppose that $w_0<0$ and $w<0$. In that case $\vec{n}_2=-\vec{u}_4$ and the face $i,i+1,j$ contains the only points in the intersection of $T_{i,j}$ with $T_{i,j+1}$, thus  $A(Q_{i,j}\cap Q_{i,j+1})=0$. We therefore examine the intersection of $T_{i,j}^A\cap T_{i,j+1}$, which determines $Q_{i,j}^A\cap Q_{i,j+1}$ (see Theorem \ref{antipodal}). Since $\vec{n}_2^A=-\vec{n}_2=\vec{u}_4$, and $T_{i,j}^A$ is the antipodal of $T_{i,j}$, the face of $T_{i,j}^A$ with normal vector $\vec{n}_2^A$ and the face of $T_{i,j+1}$ with normal vector $\vec{u}_4$ lie in the same plane but do not intersect (as shown in Figure \ref{fig:eij}). 
Since $w<0$ we know that $A(Q_{i,j,j+1})\neq0$ and it is formed by $\vec{u}_2,\vec{n}_4$ and, at least some of, the vectors $\vec{u}_1,\vec{u}_3,\vec{n}_1,\vec{n}_3$.

To find the other faces of $Q_{i,j,j+1}$, we think at the level of right and left spherical faces of the tetrahedra $T_{i,j}^A$ and $T_{i,j+1}$. These faces share a common vertex, the vertex $i$ and $i+1$, respectively. The spherical face of $T_{i,j}^A$ (resp. $T_{i,j+1}$) at $i$ has normal vector $\vec{n}_1^A$ (resp. $\vec{u}_1$) and the spherical face of $T_{i,j}^A$ (resp. $T_{i,j+1}$) at $i+1$ has normal vector $\vec{n}_3^A$ (resp. $\vec{u}_3$).  We compare the direction of the edges $\vec{p}_{i,j+1},\vec{p}_{i,j+2}$ at the vertex $i$ with the direction of $\vec{n}_1^A$ to determine the position of the spherical face that they define (the one with normal vector $\vec{u}_1$) relative to the one with normal vector $\vec{n}_1^A$ Taking into account that $\vec{n}_1^A=-\vec{n}_3$ and $\vec{n}_3^A=-\vec{n}_1$, and whether these vectors point inwards or outwards $Q_{i,j,j+1}$, depending on the sign of $\epsilon_{ij}$, we let $c_{j+1,i+1}=(\vec{p}_{j+1,i+1}\cdot\vec{n}_1)\epsilon_{ij}$, $c_{j+2,i+1}=(\vec{p}_{j+2,i+1}\cdot\vec{n}_1)\epsilon_{ij}$, $c_{j+1,i}=(\vec{p}_{j+1,i}\cdot\vec{n}_3)\epsilon_{ij}$, $c_{j+2,i}=(\vec{p}_{j+2,i}\cdot\vec{n}_3)\epsilon_{ij}$ (see Figure \ref{fig:qk21} for an illustrative example) and we think as follows: If $c_{j+1,i+1}\cdot c_{j+2,i+1}>0$, then only one of the great circles with normal vectors $\vec{n}_1,\vec{u}_3$ will be on the boundary of $Q_{i,j,j+1}$ and if $c_{j+1,i+1}\cdot c_{j+2,i+1}<0$, the spherical faces intersect and both bound $Q_{i,j,j+1}$. 
Namely, if  $c_{j+1,i+1}>0$ and $c_{j+2,i+1}>0$, only $\vec{n}_1$ and not $\vec{u}_3$ are in the boundary of $Q_{i,j,j+1}$,  if $c_{j+1,i+1}<0$ and $c_{j+2,i+1}<0$, then only $\vec{u}_3$ and not $\vec{n}_1$ is the boundary of $Q_{i,j,j+1}$.  If $c_{j+1,i+1}>0$ and $c_{j+2,i+1}<0$ then both $\vec{n}_1,\vec{u}_3$ are in the boundary of $Q_{i,j,k}$ in the following counterclockwise order $\vec{n}_4,\vec{n}_1,-\vec{u}_3,-\vec{u}_2$. If $c_{j+1,i+1}<0$ and $c_{j+2,i+1}>0$ then both $\vec{u}_3,\vec{n}_1$ are in the boundary of $Q_{i,j,j+1}$ in the following counterclockwise order $\vec{n}_4,-\vec{u}_3,\vec{n}_1,-\vec{u}_2$.

In a similar way we find which of the spherical edges formed by $T_{i,j+1},T_{i,j}^A$ at the vertex $i$ form the other side of the boundary of $Q_{i,j,j+1}$. We notice that in this case, if $c_{j+1,i}>0,c_{j+2,i}>0$, then only $\vec{n}_3$ is in the boundary of $Q_{i,j+1}$, if $c_{j+1,i}>0,c_{j+2,i}<0 $, $\vec{n}_3,\vec{u}_1$ both are in the following order counterclockwise $-\vec{u}_2,-\vec{u}_1,\vec{n}_3,\vec{n}_4$. If $c_{j+1,i}<0,c_{j+2,i}>0$, they both are but in the following order $-\vec{u}_2,\vec{n}_3,-\vec{u}_1,\vec{n}_4$. If $c_{j+1,i}<0,c_{j+2,i}<0$, only $\vec{u}_1$ is in the boundary of $Q_{i,j,j+1}$ (see Figure \ref{fig:qk21}).

\begin{figure}
   \begin{center}
\includegraphics[width=0.7\textwidth]{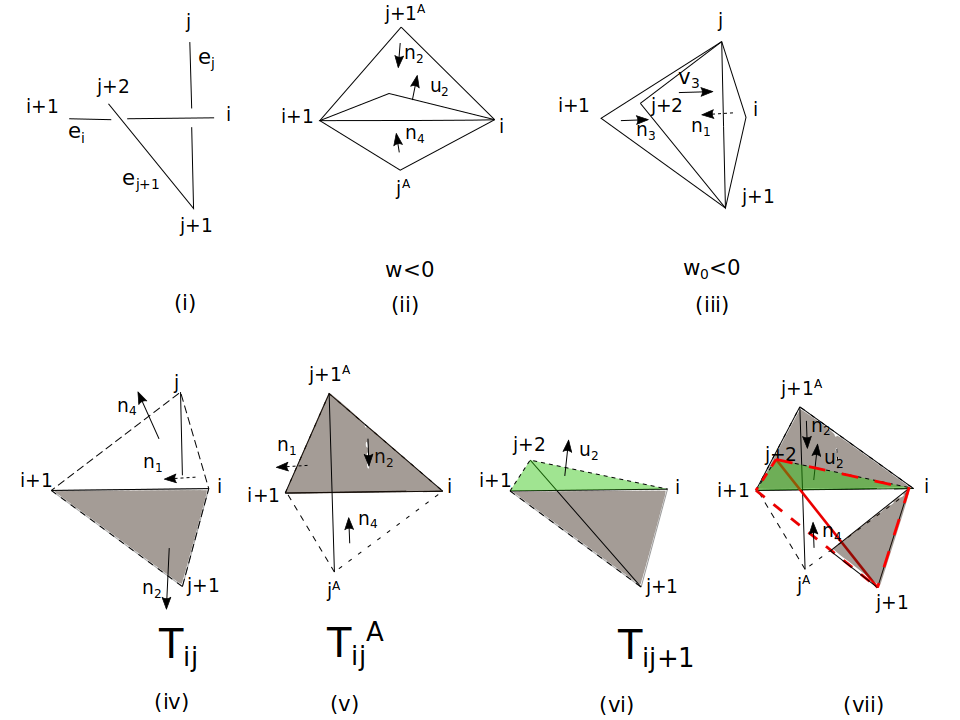}
     \caption{(i-iii) A configuration where $\epsilon_{i,j}=\epsilon_{i,j+1}$, $w<0$ and $w_0<0$. (iv) The tetrahedral formed by $e_i,e_j$, we denote $T_{i,j}$. (v) The antipodal of $T_{i,j}$, we denote $T_{i,j}^A$ (see Definition \ref{defn1} and Theorem \ref{antipodal}). (vi) The tetrahedral formed by $e_i,e_{j+1}$. In this case, $T_{i,j}\cap T_{i,j+1}=\emptyset$. If $w=(\vec{u}_2\times (-\vec{n}_2))\cdot(\vec{u}_2\times \vec{n}_4)>0$, then $T_{i,j}^A\cap T_{i,j+1}=\emptyset$ as well, giving $A(Q_{i,j,j+1})=0$. (vii) If $w<0$, then $T_{i,j}^A\cap T_{i,j+1}\neq\emptyset$, giving $A(Q_{i,j,j+1})\neq0$ (see proof of Theorem \ref{qijj+1}).}
     \label{fig:eij}
   \end{center}
\end{figure}

\begin{figure}
   \begin{center}
\includegraphics[width=0.8\textwidth]{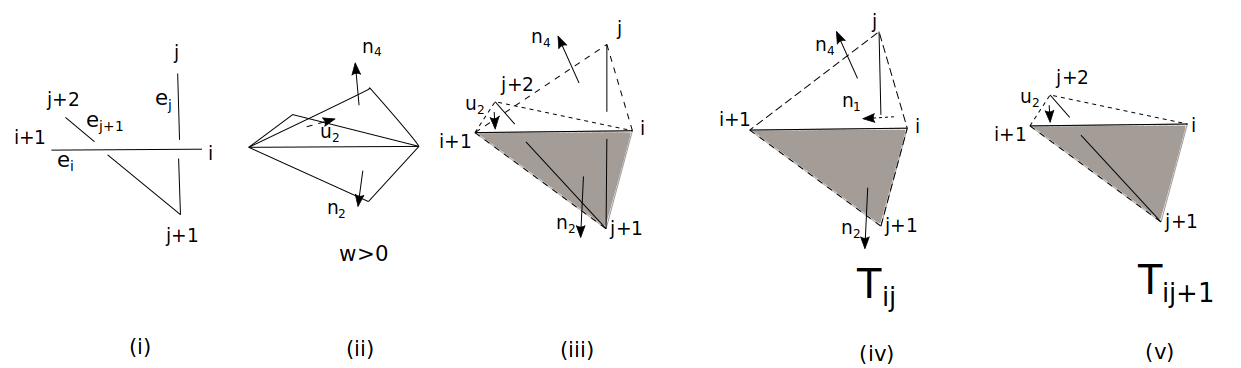}
     \caption{(i) A configuration where $\epsilon_{i,j}=-\epsilon_{i,j+1}$. (ii) In this configuration $w>0$. (iii) The tetrahedral formed by $e_i,e_j$, $T_{i,j}$ (iv) The tetrahedral formed by $e_i,e_{j+1}$, $T_{i,j+1}$. (v) In this case, $T_{i,j}^A\cap T_{i,j+1}=\emptyset$ and $T_{i,j}\cap T_{i,j+1}\neq\emptyset$. If $w=(\vec{u}_2\times (-\vec{n}_2))\cdot(\vec{u}_2\times \vec{n}_4)<0$, then two faces of $Q_{i,j,j+1}$ have normal vectors $\vec{n}_2,\vec{u}_2$, otherwise, it is $\vec{n}_2,\vec{n}_4$ (see proof of Theorem \ref{qijj+1}).}
     \label{fig:eij2}
   \end{center}
\end{figure}

Suppose that $\epsilon_{i,j}=-\epsilon_{i,j+1}$, then $A(Q_{i,j}^A\cap Q_{i,j+1})=0$ and $A(Q_{i,j,j+1})=A(Q_{i,j}\cap Q_{i,j+1})\neq0$ for all values of $w$ and one face of $Q_{i,j,j+1}$ has normal vector $n_2$ (see Figure \ref{fig:eij2} for an illustrative example). 
If $w<0$ the other face is $\vec{u}_2$ and if $w\geq0$,  it is $\vec{n}_4$.  The left and right faces are of both quadrilaterals share a common edge (the extension of the edges $j+1,i$ and $j+1,i+1$) and thus do not intersect. So, only one of each will be the boundary of $Q_{i,j,j+1}$. To determine which, we check if $c_{j+2,i}=(\vec{p}_{j+2,i}\cdot\vec{n}_1)\epsilon_{ij}>0$, then $\vec{u}_1$ is the boundary, otherwise it is $\vec{n}_1$. If $c_{j+2,i+1}=(\vec{p}_{j+2,i}\cdot\vec{n}_3)\epsilon_{ij}>0$, then $\vec{u}_3$ is the boundary, otherwise it is $\vec{n}_3$.

\end{proof}

\begin{table}
\centering
\begin{tabular}{|l|l|}
\hline
$\epsilon_{i,j}=\epsilon_{i,j+1}, w<0, w_0<0$ & $Q_{i,j,j+1}$\\
\hline
 $c_{j+1,i+1}>0, c_{j+2,i+1}>0, c_{j+1,i}>0, c_{j+2,i}>0$                   & $(n_4,\vec{n}_1,-\vec{u}_2,\vec{n}_3)$\\
 $c_{j+1,i+1}>0, c_{j+2,i+1}>0, c_{j+1,i}<0, c_{j+2,i}<0$                   & $(\vec{n}_4,\vec{n}_1,-\vec{u}_2,-\vec{u}_1)$\\
$c_{j+1,i+1}>0, c_{j+2,i+1}>0, c_{j+1,i}>0, c_{j+2,i}<0$                   & $(\vec{n}_4,\vec{n}_1,-\vec{u}_2,-\vec{u}_1,\vec{n}_3)$\\
 $c_{j+1,i+1}>0, c_{j+2,i+1}>0, c_{j+1,i}<0, c_{j+2,i}>0$                   & $(\vec{n}_4,\vec{n}_1,-\vec{u}_2,\vec{n}_3,-\vec{u}_1)$\\
 $c_{j+1,i+1}<0, c_{j+2,i+1}<0, c_{j+1,i}>0, c_{j+2,i}>0$                   & $(\vec{n}_4,-\vec{u}_3,-\vec{u}_2,\vec{n}_3)$\\
$c_{j+1,i+1}<0, c_{j+2,i+1}<0, c_{j+1,i}<0, c_{j+2,i}<0$                   & $(\vec{n}_4,-\vec{u}_3,-\vec{u}_2,-\vec{u}_1)$\\
$c_{j+1,i+1}<0, c_{j+2,i+1}<0, c_{j+1,i}>0, c_{j+2,i}<0$                   & $(\vec{n}_4,-\vec{u}_3,-\vec{u}_2,-\vec{u}_1,\vec{n}_3)$\\
$c_{j+1,i+1}<0, c_{j+2,i+1}<0, c_{j+1,i}<0, c_{j+2,i}>0$                   & $(\vec{n}_4,-\vec{u}_3,-\vec{u}_2,\vec{n}_3,-\vec{u}_1)$\\
$c_{j+1,i+1}>0, c_{j+2,i+1}<0, c_{j+1,i}>0, c_{j+2,i}>0$                   & $(\vec{n}_4,\vec{n}_1,-\vec{u}_3,-\vec{u}_2,\vec{n}_3)$\\
$c_{j+1,i+1}>0, c_{j+2,i+1}<0, c_{j+1,i}<0, c_{j+2,i}<0$                   & $(\vec{n}_4,\vec{n}_1,-\vec{u}_3,-\vec{u}_2,-\vec{u}_1)$\\
$c_{j+1,i+1}>0, c_{j+2,i+1}<0, c_{j+1,i}>0, c_{j+2,i}<0$                   & $(\vec{n}_4,\vec{n}_1,-\vec{u}_3,-\vec{u}_2,-\vec{u}_1,\vec{n}_3)$\\
$c_{j+1,i+1}>0, c_{j+2,i+1}<0, c_{j+1,i}<0, c_{j+2,i}>0$                   & $(\vec{n}_4,\vec{n}_1,-\vec{u}_3,-\vec{u}_2,\vec{n}_3,-\vec{u}_1)$\\
$c_{j+1,i+1}<0, c_{j+2,i+1}>0, c_{j+1,i}>0, c_{j+2,i}>0$                   & $(\vec{n}_4,-\vec{u}_3,\vec{n}_1,-\vec{u}_2,\vec{n}_3)$\\
$c_{j+1,i+1}<0, c_{j+2,i+1}>0, c_{j+1,i}<0, c_{j+2,i}<0$                   & $(\vec{n}_4,-\vec{u}_3,\vec{n}_1,-\vec{u}_2,-\vec{u}_1)$\\
$c_{j+1,i+1}<0, c_{j+2,i+1}>0, c_{j+1,i}>0, c_{j+2,i}<0$                   & $(\vec{n}_4,-\vec{u}_3,\vec{n}_1,-\vec{u}_2,-\vec{u}_1,\vec{n}_3)$\\
$c_{j+1,i+1}<0, c_{j+2,i+1}>0, c_{j+1,i}<0, c_{j+2,i}>0$                   & $(\vec{n}_4-\vec{u}_3,\vec{n}_1,-\vec{u}_2,\vec{n}_3,-\vec{u}_1)$\\
\hline
$\epsilon_{i,j}=\epsilon_{i,j+1}, w>0$ or $w_0>0$         & $Q_{i,j,j+1}$\\
\hline
 & $\emptyset$\\
\hline
\end{tabular}
\caption{The spherical polygon $Q_{i,j,j+1}$ in the case where the signs satisfy $\epsilon_{i,j}=\epsilon_{i,j+1}$, depending on the conformation. The spherical polygon $Q_{i,j,j+1}$  contains the vectors which define planes where the projections of $e_i,e_j$ and $e_i,e_{j+1}$ both cross.  $(\vec{w}_1,\vec{w}_2,\dotsc,\vec{w}_n)$ denotes the spherical polygon bounded by the great circles with normal vectors $\vec{w}_i$,  $i=1,\dotsc, n$, in the counterclockwise orientation, (see Definition \ref{defn1} and proof of Theorem \ref{qijj+1}).}
\label{tableqij1}
\end{table}

\begin{table}[H]
\centering
\begin{tabular}{|l|l|}
\hline
$\epsilon_{i,j}=-\epsilon_{i,j+1}, w<0$   & $Q_{i,j,j+1}$\\
\hline
 $c_{j+2,i}>0,c_{j+2,i+1}>0$                                & $(\vec{n}_2,-\vec{u}_1,-\vec{u}_2,-\vec{u}_3)$\\
 $c_{j+2,i}<0,c_{j+2,i+1}<0$                                & $(\vec{n}_2,\vec{n}_1,-\vec{u}_2,\vec{n}_3)$\\
 $c_{j+2,i}<0,c_{j+2,i+1}>0$                                & $(\vec{n}_2,\vec{n}_1,-\vec{u}_2,-\vec{u}_3)$\\
 $c_{j+2,i}>0,c_{j+2,i+1}<0$                                & $(\vec{n}_2,-\vec{u}_1,-\vec{u}_2,\vec{n}_3)$\\
\hline
$\epsilon_{i,j}=-\epsilon_{i,j+1}, w>0$   & $Q_{i,j,j+1}$\\
\hline
 $c_{j+2,i}>0,c_{j+2,i+1}>0$                                & $(\vec{n}_2,-\vec{u}_1,\vec{n}_4,-\vec{u}_3)$\\
 $c_{j+2,i}<0,c_{j+2,i+1}<0$                                & $(\vec{n}_2,\vec{n}_1,\vec{n}_4,\vec{n}_3)$\\
$c_{j+2,i}<0,c_{j+2,i+1}>0$                                & $(\vec{n}_2,\vec{n}_1,\vec{n}_4,-\vec{u}_3)$\\
 $c_{j+2,i}>0,c_{j+2,i+1}<0$                                & $(\vec{n}_2,-\vec{u}_1,\vec{n}_4,\vec{n}_3)$\\
\hline
\end{tabular}
\caption{The spherical polygon $Q_{i,j,j+1}$ in the case where the signs satisfy $\epsilon_{i,j}=-\epsilon_{i,j+1}$, depending on the conformation. The spherical polygon $Q_{i,j,j+1}$  contains the vectors which define planes where the projections of $e_i,e_j$ and $e_i,e_{j+1}$ both cross.  $(\vec{w}_1,\vec{w}_2,\dotsc,\vec{w}_n)$ denotes the spherical polygon bounded by the great circles with normal vectors $\vec{w}_i$,  $i=1,\dotsc, n$, in the counterclockwise orientation, (see Definition \ref{defn1} and proof of Theorem \ref{qijj+1}).}
\label{tableqij2}
\end{table}

\begin{theorem}\label{knotoid4}
Let $E_4$ denote a polygonal chain of 4 edges in 3 space. The probability that its projection on a random projection direction is the non-trivial knotoid k2.1 is equal to 

\begin{equation}
P(K((E_4)_{\xi})=k2.1)=P((E_4)_{\xi}=k2.1_{Bi})+P((E_4)_{\xi}=k2.1_{Bii})
\end{equation}

where $k2.1_{Bi}$ and $k2.1_{Bii}$ are the two possible k2.1 diagrams (see case B(i) and case B(ii) in Figure \ref{fig:4knotoids}) and where

\begin{align}\label{pk21}
P((E_4)_{\xi}=k2.1_{Bi})=
\begin{cases}
0, \text{ if } \epsilon_{1,3}\neq \epsilon_{1,4}\text{ or } w>0 \text{ or } w_0>0\\
\frac{1}{2\pi} Area(\vec{v}_3,-\vec{v}_2,-\vec{u}_2) \text{ if } c_{4,1}<0,w<0,w_0<0\\
\frac{1}{2\pi} Area(\vec{v}_3,-\vec{v}_2,\vec{n}_1,-\vec{u}_2) \text{ if } c_{4,1}>0,w<0,w_0<0 
\end{cases}
\end{align}

\noindent where $c_{4,1}=(\vec{p}_{4,1}\cdot\vec{n}_1)\epsilon_{1,3}$,  $w=(u_2\times (-n_2))\cdot (u_2\times n_4)$, $w_0=(\vec{v}_3\times(-\vec{n}_1))\cdot(\vec{v}_3\times\vec{n}_3)$ and the vectors $\vec{u}_2,\vec{n}_2,\vec{n}_4,\vec{v}_3,\vec{v}_2$ and $\vec{n}_1$ are normal to the planes containing the vertices $014,013,021,243,241$, and $023$, respectively. $P((E_4)_{\xi}=k2.1_{Bii})=P((R(E_4))_{\xi}=k2.1_{Bi})$, where $R(E_4)$
 is the walk $E_4$ with reversed orientation.
\end{theorem}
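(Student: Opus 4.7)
The plan is to express the event $K(E_4)_\xi = k2.1$ as a disjoint union over the two realizable diagrams, compute one case as the area of an explicit spherical polygon obtained from Theorem \ref{qijj+1} by a set-difference argument, and recover the other case by chain-reversal symmetry.

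By Proposition \ref{number4}, a projection of $E_4$ realizes $k2.1$ only through diagram $B(i)$ or $B(ii)$, and by Proposition \ref{case} the two events are mutually exclusive across projection directions, so
\begin{equation}
P(K(E_4)_\xi = k2.1) = P((E_4)_\xi = k2.1_{B(i)}) + P((E_4)_\xi = k2.1_{B(ii)}).
\end{equation}
Reversing the orientation of $E_4$ relabels $e_k$ as $e'_{5-k}$, and under this relabeling the crossing pair defining diagram $B(ii)$ of $E_4$ (crossings $e_1 e_4$ and $e_2 e_4$) becomes the pair defining diagram $B(i)$ of $R(E_4)$ (crossings $e'_1 e'_3$ and $e'_1 e'_4$). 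Hence $P((E_4)_\xi = k2.1_{B(ii)}) = P((R(E_4))_\xi = k2.1_{B(i)})$, and it suffices to establish the formula for $P((E_4)_\xi = k2.1_{B(i)})$.

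The event $(E_4)_\xi = k2.1_{B(i)}$ requires $\vec{\xi} \in Q_{1,3} \cap Q_{1,4} \setminus Q_{2,4}$ (crossings $e_1 e_3$ and $e_1 e_4$ but not $e_2 e_4$, otherwise Proposition \ref{number4} places us in Case $C$ rather than Case $B$), together with the same-sign condition $\epsilon_{1,3} = \epsilon_{1,4}$ needed for the diagram to be the non-trivial $k2.1$ rather than the unknotted knotoid. If $\epsilon_{1,3} \neq \epsilon_{1,4}$ the probability is zero. Under $\epsilon_{1,3} = \epsilon_{1,4}$, Theorem \ref{qijj+1} applied to $(e_1, e_3, e_4)$ gives $Q_{1,3,4} = \emptyset$ whenever $w > 0$ or $w_0 > 0$, so the probability is again zero. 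In the remaining regime $w < 0$ and $w_0 < 0$, the probability equals $\frac{1}{2\pi}A(Q_{1,3,4} \setminus Q_{2,4})$.

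The final task is to identify $Q_{1,3,4} \setminus Q_{2,4}$ explicitly as the spherical polygon asserted in the theorem. Because $e_3$ and $e_4$ share vertex $3$ and $e_2, e_3$ share vertex $2$, the quadrilateral $T_{2,4}$ shares its face $(1,2,4)$ with a face of $T_{1,4}$ and its face $(2,3,4)$ with a face relevant to $T_{1,3}$; these face-sharings produce collinearities among normal great circles analogous to those of Table \ref{table0}. Consequently the portion of $\partial Q_{2,4}$ interior to $Q_{1,3,4}$ consists of arcs from at most two of the great circles of $T_{2,4}$, namely those normal to $\vec{v}_2$ and $\vec{v}_3$. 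Carrying out the set difference replaces the corresponding boundary arcs of $Q_{1,3,4}$ by these two arcs; of the original boundary, only the arc normal to $-\vec{u}_2$ (face $014$ of $T_{1,4}$) necessarily persists, while the arc normal to $\vec{n}_1$ (face $023$ of $T_{1,3}$) persists precisely when $c_{4,1} > 0$ ensures it was already part of $\partial Q_{1,3,4}$ in the classification of Table \ref{tableqij1}. This yields the triangle $(\vec{v}_3, -\vec{v}_2, -\vec{u}_2)$ when $c_{4,1} < 0$ and the quadrilateral $(\vec{v}_3, -\vec{v}_2, \vec{n}_1, -\vec{u}_2)$ when $c_{4,1} > 0$, both oriented counterclockwise. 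The main obstacle is this last step: systematically checking which bounding great circles survive the set difference and verifying their counterclockwise cyclic order requires walking through the rows of Tables \ref{tableqij1} and \ref{tableqij2} consistent with the non-degeneracy hypotheses and using the face-sharing collinearities to collapse the many sub-cases down to the single dichotomy on the sign of $c_{4,1}$.
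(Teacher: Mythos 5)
There is a genuine gap, and it sits at the point where you identify the event $(E_4)_{\vec\xi}=k2.1_{B(i)}$ with ``$\vec\xi$ gives crossings $e_1e_3$ and $e_1e_4$ but not $e_2e_4$, and $\epsilon_{1,3}=\epsilon_{1,4}$.'' That set of directions is $Q_{1,3,4}\setminus Q_{2,4}$, but it is the union of the events for diagrams $B(i)$ \emph{and} $B(i')$ of Figure \ref{fig:4knotoids}: both diagrams have exactly the crossings $e_1e_3$ and $e_1e_4$, and the signs $\epsilon_{1,3},\epsilon_{1,4}$ are properties of the $3$-space chain, identical for every projection direction, so they cannot separate $(i)$ from $(i')$. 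What distinguishes them is whether the projection of $e_4$ lands on the side of the projection of $e_3$ interior to the loop, and only $B(i)$ is the knotoid $k2.1$; $B(i')$ is trivial. The paper's proof encodes this as an extra condition $C_{e_4,e_3}$ and realizes it as the hemisphere cut out by the great circle with normal vector $\vec{v}_3$ (in the direction of $\vec{v}_3$). Your proposal omits this condition entirely, so the quantity you compute, $\frac{1}{2\pi}A(Q_{1,3,4}\setminus Q_{2,4})$, is $P(B_i)+P(B_{i'})$ rather than $P((E_4)_{\vec\xi}=k2.1_{B(i)})$. This is not a cosmetic difference: Theorem \ref{fullbracket} uses exactly the gap $A(Q_1)-A(Q)$ with $Q_1=Q_{1,3,4}\setminus Q_{2,4}$ as the (generically nonzero) probability of the trivial writhe-$\pm 2$ diagrams, so the two areas genuinely differ.

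A secondary consequence is that your explanation of why $\vec{v}_3$ appears in the final polygon is not the right mechanism. You attribute the $\vec{v}_3$ arc to the set difference with $Q_{2,4}$, whereas in the paper's argument the removal of $Q_{2,4}$ is effected by passing to the $-\vec{v}_2$ hemisphere, and the $\vec{v}_3$ hemisphere implements $C_{e_4,e_3}$ (the paper shows $\vec{v}_3$ passes through the intersection points of $\vec{u}_1,\vec{u}_3$ and of $\vec{n}_1,\vec{n}_3$, hence cuts the interior of $Q_{1,3,4}$, separating the $B(i)$ directions from the $B(i')$ directions). So even though your final answer names the correct polygons $(\vec{v}_3,-\vec{v}_2,-\vec{u}_2)$ and $(\vec{v}_3,-\vec{v}_2,\vec{n}_1,-\vec{u}_2)$, the derivation as written would not produce them: a correct execution of your plan would yield the larger region $Q_1$. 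The decomposition into $B(i)$ plus $B(ii)$ via Propositions \ref{number4} and \ref{case}, and the orientation-reversal reduction, are fine and match the paper.
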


\begin{proof}

Using the notation of Proposition \ref{number4},  the probability of having a non-trivial knotoid in a random projection is equal to the probability of case B (i) or (ii) shown in Figure \ref{fig:4knotoids} with crossings $\epsilon_{1,3}=\epsilon_{1,4}$. By Proposition \ref{case} if one of the two is non-zero the other is zero. Thus, it suffices to find the probability that a projection of a polygonal chain of 4 edges is of the form  case B (i) with $\epsilon_{1,3}=\epsilon_{1,4}$, and if that probability is equal to 0, then one needs to compute the probability that it is of the type case B (ii) with $\epsilon_{1,4}=\epsilon_{2,4}$. To find a closed formula for these cases, it suffices to find a closed formula for the probability that it is nontrivial case B (i), since the same formula applied to the polygonal chain with reversed orientation of edges, will give the probability of getting case B (ii), $\epsilon_{1,4}=\epsilon_{2,4}$.

Let $\epsilon_{i,j,\vec{\xi}}$ denote the sign of the crossing between the projections of the edges $e_i,e_j$ to the plane with normal vector $\vec{\xi}$. This variable takes the values $\epsilon_{i,j,\vec{\xi}}=\epsilon_{i,j}$ when the projections of $e_i,e_j$ cross in the plane with normal vector $\vec{\xi}$ and $\epsilon_{i,j,\vec{\xi}}=0$ when the projections of $e_i,e_j$ do not cross in that plane. 

The condition for $(E_4)_{\vec{\xi}}$ being case B (i) with $\epsilon_{1,3}=\epsilon_{1,4}$ is: $\epsilon_{1,3,\vec{\xi}}=\epsilon_{1,4,\vec{\xi}}\neq0$, $\epsilon_{2,4,\vec{\xi}}=0$ and $(e_4)_{\vec{\xi}}$ lies in the side of $(e_3)_{\vec{\xi}}$ that is inside the k2.1 bounded region. Without loss of generality, let us focus in the case of $\epsilon_{1,3,\vec{\xi}}=\epsilon_{1,4,\vec{\xi}}=1$.
Let us denote these conditions as: $(\epsilon_{1,3,\vec{\xi}}=1)\cap (\epsilon_{1,4,\vec{\xi}}=1)\cap (\epsilon_{2,4,\vec{\xi}})=0\cap C_{e_4,e_3}$, where $C_{e_4,e_3}$ denotes the condition on $(e_4)_{\vec{\xi}}$ being in the side of $(e_3)_{\vec{\xi}}$ that is inside the region bounded by the projection of the edges $e_1,e_2,e_3$. Thus:

\begin{align}
P(K((E_4)_{\vec{\xi}})=k2.1)&=P(\epsilon_{1,3,\vec{\xi}}=1\cap \epsilon_{1,4,\vec{\xi}}=1\cap \epsilon_{2,4,\vec{\xi}}=0\cap C_{e_4,e_3})\nonumber\\
&=\frac{A(Q_{1,3,4}\cap((S^2\setminus Q_{2,4})\cap C_{e_4,e_3})}{2\pi}
\end{align}

\noindent where we canceled a factor 2 in the numerator which arises because antipodal vectors give the same diagram.

\begin{figure}
   \begin{center}
\includegraphics[width=0.7\textwidth]{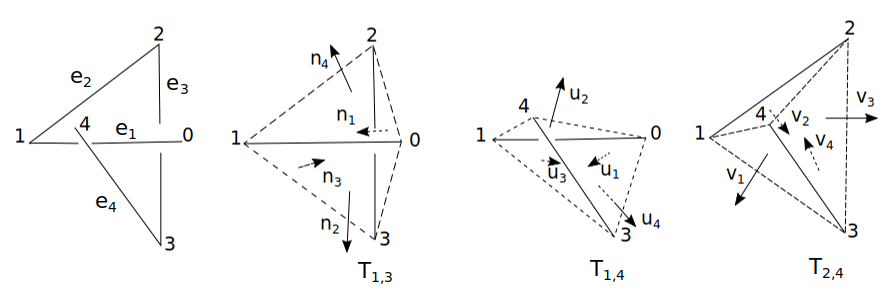}
     \caption{A chain with four edges, $e_1,e_2,e_3,e_4$ that connect the vertices $0,1,2,3,4$ in the case where $\epsilon_{1,3}=\epsilon_{1,4}=-1$. The three pairs of edges, $e_1,e_3$, $e_1,e_4$ and $e_2,e_4$, define 3 quadrilaterals $T_{1,3},T_{1,4}$ and $T_{2,4}$, respectively. These quadrilaterals define three quadrangles (and their antipodals) on the unit sphere, $Q_{13},Q_{14}$ and $Q_{24}$, respectively, which contain vectors which define projections where the projection of the corresponding pairs cross (see Figure \ref{fig:sphere}).}
     \label{fig:quad}
   \end{center}
\end{figure}

\begin{figure}
   \begin{center}
     \includegraphics[width=0.9\textwidth]{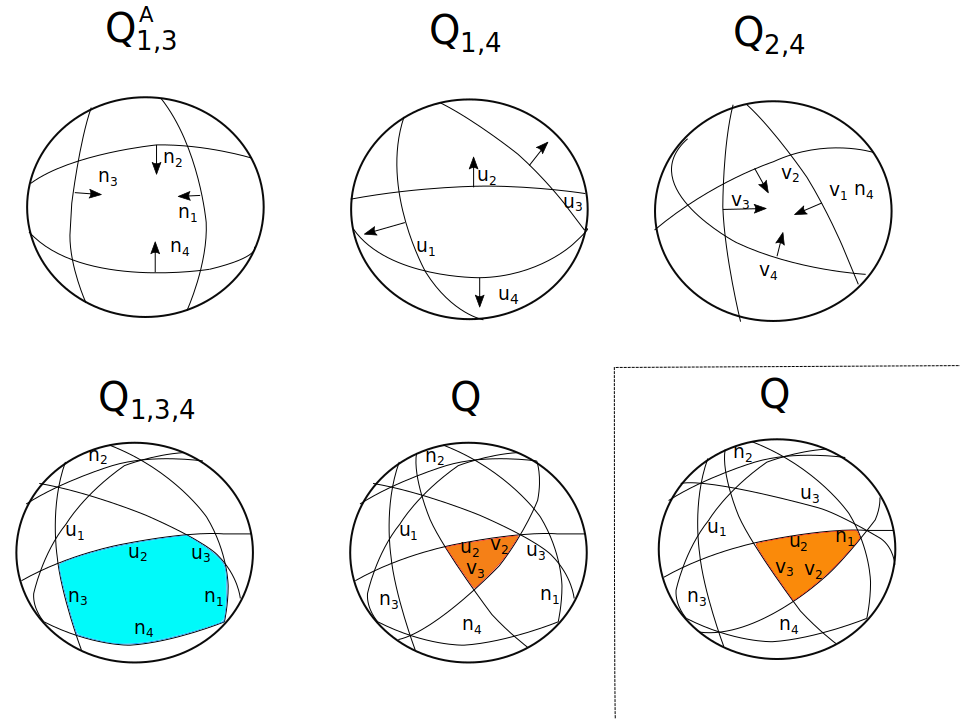}
     \caption{The spherical quadrangles $Q_{1,3}^A$, $Q_{1,4}$ and $Q_{2,4}$ defined by the faces of the quadrilaterals shown in Figure \ref{fig:quad}. $Q_{1,3,4}=Q_{1,3}\cap Q_{1,4}$ contains the vectors which define projections of $e_1,e_3,e_4$ where both pairs $e_1,e_3$ and $e_1,e_4$ cross. $Q$ is those vectors which define projections where the projection of the chain gives the knotoid $k2.1$ (configuration case B(i) from Figure \ref{fig:4knotoids}). Depending on the positions of the great circles, the resulting $Q$ could be that shown in the margin. }
     \label{fig:sphere}
   \end{center}
\end{figure}

Since $\epsilon_{1,3}=\epsilon_{1,4}$,  $Q_{1,3,4}=Q_{1,3}^A\cap Q_{1,4}$ (see Theorem \ref{qijj+1} for $i=1,j=3$).
The quadrangles $Q_{1,3}^A$, $Q_{1,4}$ and $Q_{2,4}$ are generated by the quadrilaterals $T_{1,3}^A$, $T_{1,4}$ and $T_{2,4}$ (see Figures \ref{fig:quad} and \ref{fig:sphere}). The normal vectors of $T_{1,3}^A$ are $\vec{n}_1^A=-\vec{n}_3,\vec{n}_2^A=-\vec{n}_2,\vec{n}_3^A=-\vec{n}_1,\vec{n}_4^A=-\vec{n}_4$, which define the faces of the quadrilateral in a counterclockwise order, all pointing outward $Q_{1,3}^A$. The normal vectors of $T_{1,4}$ are $\vec{u}_1,\vec{u}_2,\vec{u}_3,\vec{u}_4$, where $\vec{n}_2=-\vec{u}_4$, and all point outward $Q_{1,4}$ and the normal vectors of $T_{2,4}$ are $\vec{v}_1,\vec{v}_2,\vec{v}_3,\vec{v}_4$ in counterclockwise order, where $\vec{n}_3=\vec{v}_4,\vec{u}_3=-\vec{v}_1$, and they all point inward $Q_{2,4}$ (see Figure \ref{fig:sphere}). So, in total we have 9 vectors, which define 9 great circles on $S^2$. The normal vectors to more than two great circles that intersect and the vertices that define their common edge are shown in Table \ref{table2}.

\begin{table}[H]
\centering
\begin{tabular}{|l|l|}
\hline
\textbf{great circles}           & \textbf{common quadrilateral edge}\\
\hline
$\vec{n}_2,\vec{n}_4,\vec{u}_2$                   & 01\\
$\vec{n}_1,\vec{n}_2,\vec{u}_1$                   & 03\\
$\vec{n}_3,\vec{n}_4,\vec{v}_2$                   & 12\\
$\vec{n}_2,\vec{n}_3,\vec{u}_3,\vec{v}_1$               & 13\\
$\vec{u}_2,\vec{u}_3,\vec{v}_2$                   & 14\\
$\vec{n}_1,\vec{n}_3,\vec{v}_3$                   & 24\\
$\vec{u}_1,\vec{u}_3,\vec{v}_3$                   & 34\\
\hline
\end{tabular}
\caption{Vectors perpendicular to great circles that contain a common vector.}
\label{table2}
\end{table}

$Q_{1,3,4}$ was computed in Theorem \ref{qijj+1}. $Q_{1,3,4}$ is formed by $\vec{n}_4,\vec{u}_2$ and some of $\vec{n}_3,\vec{u}_3,\vec{n}_1,\vec{u}_1$.
To find $Q_{1,3,4}\cap((S^2\setminus Q_{2,4})\cap C_{e_4,e_3})$, we think as follows: First we notice that if $Q_{1,3,4}\neq\emptyset$, then $Q_{2,4}\subset Q_{1,4}$. This quadrangle will include great circles defined by the normal vectors $\vec{v}_{i}$ (that involve the edges $e_4,e_2$).  
Note that the great circles with normal vectors  $\vec{v}_2$, $\vec{n}_4$ and $\vec{n}_3$ intersect (bottom left corner of $Q_{134}$, see Figure \ref{fig:sphere}) and the great circles with normal vectors $\vec{v}_2$, $\vec{u}_2$ and $\vec{u}_3$ also intersect (top right corner of $Q_{134}$). Thus, $\vec{v}_2$ intersects the interior of $Q_{1,3,4}$. Since $\vec{v}_2$ bounds $Q_{2,4}$ and points inwards $Q_{2,4}$, in order to be in $S^2\setminus Q_{2,4}$, we need the part of $Q_{1,3,4}$ in the hemisphere defined by $\vec{v}_2$ in the direction $-\vec{v}_2$. The crossing of  $\vec{u}_1$ and $\vec{u}_3$ must occur outside of $Q_{1,4}$ and $Q_{1,4}^A$ Thus, their crossing will occur in $Q_{1,3}\setminus Q_{1,3,4}$. Similarly, the crossing of $\vec{n}_1,\vec{n}_3$ will cross inside $Q_{1,4}\setminus Q_{1,3,4}$. $\vec{v}_3$ goes through both of these crossing points thus $\vec{v}_3$ intersects the interior of  $Q_{1,3,4}$.  To be in the region $C_{e_4,e_3}$ (in order to avoid projections of the form $B_i\prime$), we are interested in the  hemisphere defined by the great circle with normal vector $\vec{v}_3$ in the direction of $\vec{v}_3$. Taking all this into account, $Q$ will be either equal to $(\vec{v}_3,-\vec{v}_2,\vec{n}_1,-\vec{u}_2)$ or to $(-\vec{v}_2,-\vec{u}_2,\vec{v}_3)$, depending on whether the crossing of $\vec{v}_2$ with $\vec{u}_2$ occurs inside or outside $Q_{1,3,4}$. 
Thus, we have shown that, if $Q\neq0$, then $A(Q)=A(\vec{v}_3,-\vec{v}_2,\vec{n}_1,-\vec{u}_2)$, if $c_{4,1}=(\vec{p}_{4,1}\cdot\vec{n}_1)\epsilon_{1,3}>0$, and $A(Q)=A(-\vec{v}_2,-\vec{u}_2,\vec{v}_3)$, otherwise.

If $Q=\emptyset$, then we check for case B(ii), by repeating the same algorithm for the walk with reversed orientation.

\end{proof}

\begin{remark}
For the case of $n=3$ and $n=4$, the bracket polynomial of planar knotoids coincides with the bracket polynomial of knotoids in $S^2$, since the planar knotoids that occur are exactly the same as the knotoids in $S^2$.
\end{remark}

\begin{theorem}\label{fullbracket}
Let $E_4$ denote a polygonal chain of 4 edges, $e_1,e_2,e_3,e_4$ in 3-space, then the bracket polynomial of $E_4$ is

\begin{equation}\label{jn4}
\begin{split}
\langle E_4\rangle=& p_{k21}\langle k2.1\rangle+p_{k0,\epsilon_{2,4}}(-A^3)^{\epsilon_{2,4}}+p_{k0,-\epsilon_{2,4}}(-A^3)^{-\epsilon_{2,4}}+p_{k0,-2\epsilon_{2,4}}(-A^3)^{-2\epsilon_{2,4}}\\
&+p_{k0,2\epsilon_{2,4}}(-A^3)^{2\epsilon_{2,4}}+p_{k0,0}\nonumber\\
\end{split}
\end{equation}

\noindent where the coefficients are: 

\begin{align}
p_{k21}=P(K((E_4)_{\xi})=k2.1,wr(E_4)_{\xi})=-\epsilon_{2,4})=
\begin{cases}
\frac{1}{2\pi}A(Q), \text{ if } \epsilon_{1,3}=\epsilon_{1,4}\\
0, \text{ otherwise}\\
\end{cases}
\end{align}

\begin{align}
&p_{k0,\epsilon_{2,4}}=P(K((E_4)_{\xi})=k0,wr((E_4)_{\xi})=\epsilon_{2,4})\nonumber\\
&=
\begin{cases}
2|L(e_2,e_4)|-\frac{1}{2\pi}A(Q_{4,2,1}), \epsilon_{1,3}=\epsilon_{1,4}\\
2|L(e_2,e_4)|+2|L(e_1,e_4)|-\frac{1}{2\pi}(A(Q_{4,2,1})+A(Q_2)+A(Q_1)), \text{ if }\epsilon_{2,4}=\epsilon_{1,4}=-\epsilon_{1,3}\\
2|L(e_2,e_4)|+2|L(e_1,e_3)|-\frac{1}{2\pi}(A(Q_{4,2,1})+A(Q_1)), \text{ if } \epsilon_{2,4}=\epsilon_{1,3}=-\epsilon_{1,4}\\
\end{cases}
\end{align}

\begin{align}
&p_{k0,-\epsilon_{2,4}}=P(K((E_4)_{\xi})=k0,wr((E_4)_{\xi})=-\epsilon_{2,4})\nonumber\\
&=
\begin{cases}
2|L(e_1,e_3)|+2|L(e_1,e_4)|-\frac{1}{2\pi}(A(Q_{1,3,4})+A(Q_2)+A(Q_1)),  \epsilon_{1,3}=\epsilon_{1,4}\\
2|L(e_1,e_3)|-\frac{1}{2\pi}A(Q_{1,3,4}), \text{ if } \epsilon_{2,4}=\epsilon_{1,4}=-\epsilon_{1,3}\\
2|L(e_1,e_4)|-\frac{1}{2\pi}(A(Q_{1,3,4})+A(Q_2)), \text{ if }  \epsilon_{2,4}=\epsilon_{1,3}=-\epsilon_{1,4}\\
\end{cases}
\end{align}

\begin{align}
p_{k0,2\epsilon_{2,4}}=P(K((E_4)_{\xi})=k0,wr((E_4)_{\xi})=2\epsilon_{2,4})=
\begin{cases}
\frac{1}{2\pi}(A(Q_2)-A(Q)), \text{ if }  \epsilon_{2,4}=\epsilon_{1,4}=-\epsilon_{1,3}\\
0, \text{ otherwise}\\
\end{cases}
\end{align}

\begin{align}
&p_{k0,-2\epsilon_{2,4}}=\nonumber\\
&P(K((E_4)_{\xi})=k0,wr((E_4)_{\xi})=-2\epsilon_{2,4})=
\begin{cases}
\frac{1}{2\pi}(A(Q_1)-A(Q)), \text{ if }  \epsilon_{1,4}=\epsilon_{1,3}=-\epsilon{2,3}\\
0, \text{ otherwise}\\
\end{cases}
\end{align}

\noindent and

\begin{equation}
\begin{split}
&p_{k0,0}=P(K((E_4)_{\xi})=k0,wr((E_4)_{\xi})=0)=1-(P(K((E_4)_{\xi})=k0,wr((E_4)_{\xi})=-2\epsilon_{2,4})\\
&+P(K((E_4)_{\xi})=k0,wr((E_4)_{\xi})=2\epsilon_{2,4})+P(K((E_4)_{\xi})=k0,wr((E_4)_{\xi})=-\epsilon_{2,4})\\
&+P(K((E_4)_{\xi})=k0,wr((E_4)_{\xi})=\epsilon_{2,4})+P(K((E_4)_{\xi})=k2.1,wr(E_4)_{\xi})=-\epsilon_{2,4})\\
\end{split}
\end{equation}

\noindent where $\epsilon_{i,j}$ denotes the sign of the linking number between $e_i,e_j$, $Q_1=Q_{1,3,4}\setminus Q_{2,4}$, $Q_2=Q_{4,2,1}\setminus Q_{1,3}$ and $Q=Q((E_4)_{\vec{\xi}}=k2.1)$. $P(Q)$ is derived in Theorem \ref{qijj+1} and $Q_1$ is shown in Table \ref{tabletheorem}. $Q_{4,2,1}$, $Q_2$ are derived with the same formulas for the reversed polygonal chain.

\end{theorem}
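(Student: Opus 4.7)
The plan is to enumerate all projection diagrams of $E_4$ using Proposition \ref{number4}, compute the geometric probability of each via the Banchoff identity together with the spherical-area machinery of Theorem \ref{qijj+1}, and then group the probabilities by the resulting $(k_0,w)$ pair, where $k_0$ is the knotoid type of the projected diagram and $w$ is its writhe. In any projection of $E_4$, the set of crossing pairs is one of $\emptyset$, $\{(e_1,e_3)\}$, $\{(e_1,e_4)\}$, $\{(e_2,e_4)\}$, $\{(e_1,e_3),(e_1,e_4)\}$ (case B(i/i')), $\{(e_1,e_4),(e_2,e_4)\}$ (case B(ii/ii')), or $\{(e_1,e_3),(e_1,e_4),(e_2,e_4)\}$ (case C), and the writhe of a projection is $\sum \epsilon_{ij}$ over its crossings. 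By Proposition \ref{case} together with Theorem \ref{knotoid4}, the only projections giving the non-trivial knotoid $k2.1$ are case B(i) when $\epsilon_{1,3}=\epsilon_{1,4}$ and case B(ii) when $\epsilon_{1,4}=\epsilon_{2,4}$, each with probability $\tfrac{1}{2\pi}A(Q)$; this gives $p_{k21}$. All other projections produce the trivial knotoid $k_0$.

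Next I would compute the probability of each crossing configuration. The Banchoff identity gives $P((e_i,e_j)\text{ cross})=2|L(e_i,e_j)|$. Applying Theorem \ref{qijj+1} for $i=1,j=3$ and, after reversing the orientation of the chain, for $i=4,j=2$, gives $P(\text{case B(i) or C})=\tfrac{1}{2\pi}A(Q_{1,3,4})$ and $P(\text{case B(ii) or C})=\tfrac{1}{2\pi}A(Q_{4,2,1})$. Letting $Q_C$ denote the region where all three pairs cross, the realizability constraint from Proposition \ref{number4} (that $(e_1,e_3)$ and $(e_2,e_4)$ cannot cross in a projection without $(e_1,e_4)$ also crossing) gives $A(Q_{1,3,4})=A(Q_1)+A(Q_C)$ and $A(Q_{4,2,1})=A(Q_2)+A(Q_C)$. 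Inclusion--exclusion then yields the probability of each ``exactly one pair crosses'' configuration in terms of these areas. Moreover, the case-B(i) region of area $A(Q_1)$ splits into the $k2.1$ sub-region of area $A(Q)$ (when $\epsilon_{1,3}=\epsilon_{1,4}$) and the $k_0$ remainder of area $A(Q_1)-A(Q)$; symmetrically for case B(ii).

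Then I would, for each of the three sign patterns listed in the theorem, tabulate the writhe of every configuration and sum the probabilities grouped by writhe. Under $\epsilon_{1,3}=\epsilon_{1,4}$, case B(i) contributes writhe $2\epsilon_{1,3}$ (as $k2.1$ or $k_0$), single crossings contribute writhes $\epsilon_{1,3},\epsilon_{1,4},\epsilon_{2,4}$, case B(ii) contributes writhe $\epsilon_{1,4}+\epsilon_{2,4}$, and case C contributes writhe $\epsilon_{1,3}+\epsilon_{1,4}+\epsilon_{2,4}$. Under $\epsilon_{2,4}=\epsilon_{1,4}=-\epsilon_{1,3}$, case B(ii) provides both the $k2.1$ contribution and a trivial writhe-$2\epsilon_{2,4}$ contribution $\tfrac{1}{2\pi}(A(Q_2)-A(Q))$, while case B(i) has writhe $0$. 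The third pattern is symmetric, with $Q_1$ in place of $Q_2$. Regrouping areas via the identities above yields the stated closed forms for $p_{k21}$, $p_{k0,\pm\epsilon_{2,4}}$ and $p_{k0,\pm 2\epsilon_{2,4}}$, and $p_{k0,0}$ is obtained from the normalization $\sum p_{*}=1$.

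The hard part will be the careful case analysis across the three sign patterns in the last step. For each pattern one must identify which of the (at most) five distinct writhes arises in which configuration, place the contribution of case C in the correct writhe bucket (which depends on all three signs simultaneously), and avoid double-counting the overlap region $Q_C$ in the inclusion--exclusion. Verifying that the expressions simplify exactly to the stated forms --- in particular, that only two of the sign patterns produce a nonzero $p_{k0,\pm 2\epsilon_{2,4}}$ coefficient, because in those and only those patterns a pair of same-signed crossings sums to $\pm 2\epsilon_{2,4}$ --- is the most delicate piece of the argument.
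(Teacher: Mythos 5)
Your overall strategy is the same as the paper's: enumerate the diagram types of Proposition \ref{number4}, compute the probability of each as a normalized spherical area (Banchoff for single crossings, Theorem \ref{qijj+1} plus the orientation-reversal trick for the double-crossing regions $Q_{1,3,4}$ and $Q_{4,2,1}$), split the case B regions into their $k2.1$ and trivial parts, regroup by writhe within each of the three sign patterns, and recover $p_{k0,0}$ by normalization. Your inclusion--exclusion identities $A(Q_{1,3,4})=A(Q_1)+A(Q_C)$ and $A(Q_{4,2,1})=A(Q_2)+A(Q_C)$ are exactly the paper's relations \eqref{subsets}, and your writhe bookkeeping (e.g.\ that B(i) has writhe $-2\epsilon_{2,4}$ and B(ii) has writhe $0$ when $\epsilon_{1,3}=\epsilon_{1,4}$, and that case C lands in the $-\epsilon_{2,4}$ bucket) reproduces the stated coefficients correctly.

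There is, however, a genuine gap: your plan never produces a finite form for $A(Q_1)$, $A(Q_2)$, or equivalently for the triple-overlap area $A(Q_C)=A(Q_{1,3,4}\cap Q_{2,4})$. Your two identities relate three unknowns ($A(Q_1)$, $A(Q_2)$, $A(Q_C)$) to the two known areas $A(Q_{1,3,4})$ and $A(Q_{4,2,1})$, so the system is underdetermined; at least one of these sets must be exhibited explicitly as a spherical polygon bounded by named great circles. Theorem \ref{qijj+1} does not supply this --- it only handles triples $e_i,e_j,e_{j+1}$ with two consecutive edges, i.e.\ it gives $Q_{1,3,4}$ and $Q_{4,2,1}$ but not their intersection with the third quadrangle $Q_{2,4}$ (resp.\ $Q_{1,3}$). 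The bulk of the paper's proof, and the content of Table \ref{tabletheorem} which is referenced in the theorem statement itself, is precisely the case-by-case determination of how the great circles with normal vectors $\vec{v}_2$ and $\vec{v}_3$ cut the interior of $Q_{1,3,4}$ (using the reversed walk and the signs $c_{4,0}$, $c_{4',1'}$, $w$, $w'$ to decide which faces appear on the boundary of $Q_1$). You identify the regrouping by sign pattern as the hard part, but that step is routine once the areas are known; the delicate spherical geometry that makes the result a \emph{finite} form is the part your proposal omits.
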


\begin{table}
\centering
\begin{tabular}{|l|l|l|}
\hline
$\epsilon_{1,3}=\epsilon_{1,4}$, $w<0,w_0<0$        & $Q_{1,3,4}$                 & $Q_1$ \\
\hline
$c_{3,1}>0,c_{4,1}>0,c_{3,0}>0,c_{4,0}>0$               & $(\vec{n}_4,\vec{n}_1,-\vec{u}_2,\vec{n}_3)$        & $(\vec{n}_4,-\vec{v}_3,-\vec{u}_2,\vec{n}_3)\cup Q$ \\
$c_{3,1}>0,c_{4,1}>0,c_{3,0}<0,c_{4,0}<0$               & $(\vec{n}_4,\vec{n}_1,-\vec{u}_2,-\vec{u}_1)$         & $(\vec{n}_4,-\vec{v}_3,-\vec{u}_2,-\vec{u}_1)\cup Q$\\
$c_{3,1}>0,c_{4,1}>0,c_{3,0}>0,c_{4,0}<0$               & $(\vec{n}_4,\vec{n}_1,-\vec{u}_2,-\vec{u}_1,\vec{n}_3)$     & $(\vec{n}_4,-\vec{v}_3,-\vec{u}_2,-\vec{u}_1,\vec{n}_3)\cup Q$\\
$c_{3,1}>0,c_{4,1}>0,c_{3,0}<0,c_{4,0}>0$               & $(\vec{n}_4,\vec{n}_1,-\vec{u}_2,\vec{n}_3,-\vec{u}_1)$     & $(\vec{n}_4,-\vec{v}_3,-\vec{u}_2,\vec{n}_3,-\vec{u}_1)\cup Q)$\\
$c_{3,1}<0,c_{4,1}<0,c_{3,0}>0,c_{4,0}>0$               & $(\vec{n}_4,-\vec{u}_3,-\vec{u}_2,\vec{n}_3)$       & $(\vec{n}_4,-\vec{v}_3,-\vec{u}_2,\vec{n}_3)\cup Q$\\
$c_{3,1}<0,c_{4,1}<0,c_{3,0}<0,c_{4,0}<0$               & $(\vec{n}_4,-\vec{u}_3,-\vec{u}_2,-\vec{u}_1)$        & $(\vec{n}_4,-\vec{v}_3,-\vec{u}_2,-\vec{u}_1) \cup Q$ \\ 
$c_{3,1}<0,c_{4,1}<0,c_{3,0}>0,c_{4,0}<0$               & $(\vec{n}_4,-\vec{u}_3,-\vec{u}_2,-\vec{u}_1,\vec{n}_3)$    & $(\vec{n}_4,-\vec{v}_3,-\vec{u}_2,-\vec{u}_1,\vec{n}_3)\cup Q$\\
$c_{3,1}<0,c_{4,1}<0,c_{3,0}<0,c_{4,0}>0$               & $(\vec{n}_4,-\vec{u}_3,-\vec{u}_2,\vec{n}_3,-\vec{u}_1)$    & $(\vec{n}_4,-\vec{v}_3,-\vec{u}_2,\vec{n}_3,-\vec{u}_1)\cup Q$\\
$c_{3,1}>0,c_{4,1}<0,c_{3,0}>0,c_{4,0}>0$               & $(\vec{n}_4,\vec{n}_1,-\vec{u}_3,-\vec{u}_2,\vec{n}_3)$    & $(\vec{n}_4,-\vec{v}_3,-\vec{u}_2,\vec{n}_3)\cup Q$\\
$c_{3,1}>0,c_{4,1}<0,c_{3,0}<0,c_{4,0}<0$               & $(\vec{n}_4,\vec{n}_1,-\vec{u}_3,-\vec{u}_2,-\vec{u}_1)$     & $(\vec{n}_4,-\vec{v}_3,-\vec{u}_2,-\vec{u}_1)\cup Q$\\
$c_{3,1}>0,c_{4,1}<0,c_{3,0}>0,c_{4,0}<0$               & $(\vec{n}_4,\vec{n}_1,-\vec{u}_3,-\vec{u}_2,-\vec{u}_1,\vec{n}_3)$ & $(\vec{n}_4,-\vec{v}_3,-\vec{u}_2,-\vec{u}_1,\vec{n}_3)\cup Q$\\
$c_{3,1}>0,c_{4,1}<0,c_{3,0}<0,c_{4,0}>0$               & $(\vec{n}_4,\vec{n}_1,-\vec{u}_3,-\vec{u}_2,\vec{n}_3,-\vec{u}_1)$ & $(\vec{n}_4,-\vec{v}_3,-\vec{u}_2,\vec{n}_3,-\vec{u}_1)\cup Q$\\
$c_{3,1}<0,c_{4,1}>0,c_{3,0}>0,c_{4,0}>0$               & $(\vec{n}_4,-\vec{u}_3,\vec{n}_1,-\vec{u}_2, \vec{n}_3)$   & $(\vec{n}_4,-\vec{v}_3,-\vec{u}_2, \vec{n}_3)\cup Q$ \\
$c_{3,1}<0,c_{4,1}>0,c_{3,0}<0,c_{4,0}<0$               & $(\vec{n}_4,-\vec{u}_3,\vec{n}_1,-\vec{u}_2, -\vec{u}_1)$    & $(\vec{n}_4,-\vec{v}_3,-\vec{u}_2, -\vec{u}_1)\cup Q$\\
$c_{3,1}<0,c_{4,1}>0,c_{3,0}>0,c_{4,0}<0$               & $(\vec{n}_4,-\vec{u}_3,\vec{n}_1,-\vec{u}_2,-\vec{u}_1,\vec{n}_3)$ & $(\vec{n}_4,-\vec{v}_3,-\vec{u}_2,-\vec{u}_1,\vec{n}_3)\cup Q$\\
$c_{3,1}<0,c_{4,1}>0,c_{3,0}<0,c_{4,0}>0$               & $(\vec{n}_4,-\vec{u}_3,\vec{n}_1,-\vec{u}_2,\vec{n}_3,-\vec{u}_1)$ & $(\vec{n}_4,-\vec{v}_3,-\vec{u}_2,\vec{n}_3,-\vec{u}_1)\cup Q$\\
\hline
$\epsilon_{1,3}=\epsilon_{1,4}$, $w>0$ or $w_0>0$        & $Q_{1,3,4}$                 & $Q_1$  \\
\hline
       & $\emptyset$                 & $\emptyset$  \\
\hline
$\epsilon_{1,3}=-\epsilon_{1,4},w<0$        & $Q_{1,3,4}$                 & $Q_1$   \\
 \hline
 $c_{4,0}>0,c_{4,1}>0$               & $(\vec{n}_2,-\vec{u}_1,-\vec{u}_2,-\vec{u}_3)$              &  $Q_{1,3,4}\setminus(\vec{v}_1,\vec{v}_2,\vec{v}_3,\vec{n}_2)$\\
 $c_{4,0}<0,c_{4,1}<0$               & $(\vec{n}_2,\vec{n}_1,-\vec{u}_2,\vec{n}_3)$        &    $Q_{1,3,4}$\\
$c_{4,0}<0,c_{4,1}>0$               & $(\vec{n}_2,\vec{n}_1,-\vec{u}_2,-\vec{u}_3)$       &   $Q_{1,3,4}\setminus(\vec{v}_1,\vec{v}_2,\vec{n}_1,\vec{n}_2)$\\
$c_{4,0}>0,c_{4,1}<0$               &  $(\vec{n}_2,-\vec{u}_1,-\vec{u}_2,\vec{n}_3)$      &   $Q_{1,3,4}$\\
\hline
$\epsilon_{1,3}=-\epsilon_{1,4},w>0$        & $Q_{1,3,4}$                 & $Q_1$   \\
 \hline
$c_{4,0}>0,c_{4,1}>0$               & $(\vec{n}_2,-\vec{u}_1,\vec{n}_4,-\vec{u}_3)$       &     $Q_{1,3,4}\setminus(-\vec{u}_3,\vec{n}_4,\vec{v}_3,\vec{n}_2)$\\
$c_{4,0}<0,c_{4,1}<0,c_{4\prime,1\prime}>0$               & $(\vec{n}_2,\vec{n}_1,\vec{n}_4,\vec{n}_3)$              &      $Q_{1,3,4}\setminus(\vec{v}_3,-\vec{v}_2,\vec{n}_2,\vec{n}_1,\vec{n}_4)$\\
$c_{4,0}<0,c_{4,1}<0,c_{4\prime,1\prime}<0$               & $(\vec{n}_2,\vec{n}_1,\vec{n}_4,\vec{n}_3)$              &      $Q_{1,3,4}\setminus(\vec{v}_3,-\vec{v}_2,\vec{n}_1,\vec{n}_4)$\\
$c_{4,0}<0,c_{4,1}>0$               & $(\vec{n}_2,\vec{n}_1,\vec{n}_4,-\vec{u}_3)$        &     $\emptyset$\\
$c_{4,0}>0,c_{4,1}<0$               & $(\vec{n}_2,-\vec{u}_1,\vec{n}_4,\vec{n}_3)$        &     $Q_{1,3,4}$\\
\hline
\end{tabular}
\caption{The spherical polygons $Q_{1,3,4}$ and $Q_1=Q_{1,3,4}\setminus Q_{2,4}$, respectively, are computed by using the above expressions. The expression $(\vec{w}_1,\dotsc,\vec{w}_n)$ denotes the spherical polygon defined by the intersection of the great circles with normal vectors $\vec{w}_1,\dotsc,\vec{w}_n$ in the counterclockwise orientation (see Definition \ref{defn1}).  The expressions depend on the conformation of the chain in 3-space, where $c_{3,1}=(\vec{p}_{3,1}\cdot\vec{n}_1)_{\epsilon_{1,3}}$, $c_{4,1}=(\vec{p}_{4,1}\cdot\vec{n}_1)\epsilon_{1,3}$, $c_{3,0}=(\vec{p}_{3,0}\cdot\vec{n}_3)\epsilon_{1,3}$, $c_{4,0}=(\vec{p}_{4,0}\cdot\vec{n}_3)\epsilon_{1,3}$, $c_{4\prime,1\prime}=(\vec{p}_{1,4}\cdot(-\vec{v}_2))\epsilon_{2,4}$ and $w=(\vec{u}_2\times (-\vec{n}_2))\cdot(\vec{u}_2\times \vec{n}_4)$, where $\vec{n}_1,\vec{u}_i,\vec{v}_i$ are the normal vectors to the quadrilaterals $T_{1,3},T_{1,4},T_{2,4}$ and where $\vec{p}_{i,j}$ is the vector that connects vertex $i$ to vertex $j$ in 3-space. The areas of $Q_{4,2,1}$ and $Q_2$ are obtained from the areas $Q_{1,3,4}$ and $Q_1$ of the polygonal chain with reversed orientation (see proof of Theorem \ref{fullbracket}).}\label{tabletheorem}
\end{table}

\begin{proof}

In the following, for simplicity, we will write $P(A_1)$ to express the probability  $P(K((E_4)_{\xi})=k0_{A_1})$, etc.

By Proposition \ref{number4}, $k2.1$ is a possible knotoid diagram only when $\epsilon_{1,3}=\epsilon_{1,4}$, in which case, it also implies that $\epsilon_{2,4}=-\epsilon_{1,3}$. The probability of obtaining $k2.1$ is found in Theorem \ref{knotoid4}. 

Thus, we only need to examine the probabilities of obtaining the trivial knotoid with a given writhe. By inspection of the diagrams shown in Figure \ref{fig:4knotoids}, we first notice the following:

\begin{align}
P(K((E_4)_{\xi})=k0,wr((E_4)_{\xi})=\epsilon_{2,4})=
\begin{cases}
P(A_2), \text{ if }  \epsilon_{1,3}=\epsilon_{1,4}=-\epsilon_{2,4}\\
P(A_3)+P(A_2)+P(C), \text{ if }  \epsilon_{2,4}=\epsilon_{1,4}=-\epsilon_{1,3}\\
P(A_1)+P(A_2)+P(C), \text{ if }  \epsilon_{2,4}=\epsilon_{1,3}=-\epsilon_{1,4}\\
\end{cases}
\end{align}

\begin{align}
P(K((E_4)_{\xi})=k0,wr((E_4)_{\xi})=-\epsilon_{2,4})=
\begin{cases}
P(A_1)+P(A_3)+P(C), \text{ if }  \epsilon_{1,3}=\epsilon_{1,4}=-\epsilon_{2,4}\\
P(A_1), \text{ if }  \epsilon_{2,4}=\epsilon_{1,4}=-\epsilon_{1,3}\\
P(A_3), \text{ if }  \epsilon_{2,4}=\epsilon_{1,3}=-\epsilon_{1,4}\\
\end{cases}
\end{align}

\begin{align}
P(K((E_4)_{\xi})=k0,wr((E_4)_{\xi})=2\epsilon_{2,4})=
\begin{cases}
P(B_{ii})+P(B_{ii}\prime), \text{ if }  \epsilon_{2,4}=\epsilon_{1,4}=-\epsilon_{1,3}\\
0, \text{ otherwise}\\
\end{cases}
\end{align}

\begin{align}
P(K((E_4)_{\xi})=k0,wr((E_4)_{\xi})=-2\epsilon_{2,4})=
\begin{cases}
P(B_{i})+P(B_{i}\prime), \text{ if }  \epsilon_{1,3}=-\epsilon_{1,4}=-\epsilon_{2,4}\\
0, \text{ otherwise}\\
\end{cases}
\end{align}

We will compute these probabilities in the three cases: $\epsilon_{1,3}=\epsilon_{1,4}$,  $\epsilon_{1,3}=-\epsilon_{1,4}=\epsilon_{2,4}$,  $\epsilon_{1,3}=-\epsilon_{1,4}=-\epsilon_{2,4}$.

First, we notice that, in all cases, due to the connectivity of the chain,

\begin{align}\label{subsets}
&Q_{2,4}\cap Q_{1,3}\subset Q_{2,4}\cap Q_{1,4}=Q_{4,2,1}\\
&Q_{2,4}\cap Q_{1,3}\subset Q_1=Q_{1,3}\cap Q_{1,4}=Q_{1,3,4}\nonumber
\end{align}

The probabilities can be expressed as:

\begin{align}
&P(A_1)=2|L(e_1,e_3)|-\frac{1}{2\pi}A(Q_{1,3,4})\nonumber\\
&P(A_2)=2|L(e_1,e_3)|-\frac{1}{2\pi}A(Q_{4,2,1})\nonumber\\
&P(A_3)=2|L(e_1,e_4)|-\frac{1}{2\pi}A(Q_{4,2,1}\setminus Q_{1,3})-A(Q_{1,3,4})\nonumber\\
&P(C)=\frac{1}{2\pi}A(Q_{2,4}\cap Q_{1,3,4})\nonumber\\
&P(B_i)+P(B_i\prime)=\frac{1}{2\pi}A(Q_{1,3,4}\setminus Q_{2,4})\nonumber\\
&P(B_{ii})+P(B_{ii}\prime)=\frac{1}{2\pi}A(Q_{4,2,1}\setminus Q_{1,3})\nonumber
\end{align}

From all these equations, and using the notation $Q_1=Q_{1,3,4}\setminus Q_{2,4}$ and $Q_2=Q_{4,2,1}\setminus Q_{1,3}$, we obtain the expressions of the statement of the Theorem.  

We proceed with finding finite forms for $Q_{1,3,4}$ and $Q_1$ from which the finite forms of $Q_{4,2,1}$ and  $Q_2$ are also derived.

\noindent\textit{Finite form of $Q_{1,3,4}$} 

The finite form of $Q_{1,3,4}$ is found by Theorem \ref{qijj+1} for $i=0,j=2$.

\noindent\textit{Finite form of $Q_{4,2,1}$}: 

For the finite form of $Q_{4,2,1}$ we think as follows: Let $R(E_4)$ to denote the polygonal chain $E_4$ with reversed numbering of vertices. Let us denote its edges $e_1^{\prime},e_2^{\prime},e_3^{\prime},e_4^{\prime}$. Then $Q_{4,2,1}=Q_{1^{\prime},3^{\prime},4^{\prime}}$. This can be obtained from table \ref{tabletheorem} determined by the algorithm described in Section \ref{Gaussfinite}  for $n_i\prime,u_i\prime$ which are related to the normal vectors of $E_4$ as follows: $\vec{n}_1\prime=-\vec{v}_2$, $\vec{n}_2\prime=-\vec{v}_1$, $\vec{n}_3\prime=-\vec{v}_4$, $\vec{n}_4\prime=-\vec{v}_3$, $\vec{u}_1\prime=-\vec{u}_2$, $\vec{u}_2\prime=-\vec{u}_1$, $\vec{u}_3\prime=-\vec{u}_4$, $\vec{u}_4\prime=-\vec{u}_3$. Accordingly, $w\prime=(\vec{u}_1\times\vec{v}_1)\cdot(\vec{u}_1\times(-\vec{v}_3))$, $w_0\prime=(\vec{n}_4\times\vec{v}_2)\cdot(\vec{n}_4\times\vec{n}_3)$, $\epsilon_{1\prime,3\prime}=\epsilon_{2,4}$ and $\epsilon_{1\prime,4\prime}=\epsilon_{1,4}$. Finally, $c_{3\prime,1\prime}=(\vec{p}_{3\prime,1\prime}\cdot\vec{n}_1\prime)\epsilon_{1\prime,3\prime}=(\vec{p}_{1,3}\cdot(-\vec{v}_2)\epsilon_{2,4}$, otherwise  $c_{4\prime,1\prime}=(\vec{p}_{4\prime,1\prime}\cdot\vec{n}_1\prime)\epsilon_{1\prime,3\prime}=(\vec{p}_{1,4}\cdot(-\vec{v}_2))\epsilon_{2,4}$, $c_{3\prime,0\prime}=(\vec{p}_{3\prime,0\prime}\cdot\vec{n}_3\prime)\epsilon_{1\prime,3\prime}=(\vec{p}_{1,4}\cdot(-\vec{v}_4))\epsilon_{2,4}$, otherwise  $c_{4\prime,0\prime}=(\vec{p}_{4\prime,0\prime}\cdot\vec{n}_3\prime)\epsilon_{1\prime,3\prime}=(\vec{p}_{0,4}\cdot(-\vec{v}_4))\epsilon_{2,4}$, when $\epsilon_{1\prime,3\prime}=\epsilon_{1\prime,4\prime}$ and 
$c_{4\prime,0\prime}=(\vec{p}_{4\prime,0\prime}\cdot\vec{n}_1\prime)\epsilon_{1\prime,3\prime}=(\vec{p}_{0,4}\cdot(-\vec{v}_2))\epsilon_{2,4}$, otherwise  $c_{4\prime,1\prime}=(\vec{p}_{4\prime,1\prime}\cdot\vec{n}_3\prime)\epsilon_{1\prime,3\prime}=(\vec{p}_{0,3}\cdot(-\vec{v}_4))\epsilon_{2,4}$,  when $\epsilon_{1\prime,3\prime}=-\epsilon_{1\prime,4\prime}$

\noindent\textit{Finite form of $Q_1$} 

\noindent - Case $\epsilon_{1,4}=\epsilon_{1,3}=-\epsilon_{2,4}$: 
One can derive from the proof of Theorem \ref{knotoid4} the area of $Q_{1,3,4}\setminus Q_{2,4}$. The area will be $Q_1=Q\cup(\vec{n}_4,-\vec{v}_3,-\vec{u}_2,x)$,  where $x$ is equal to $-\vec{u}_1$ or $\vec{n}_3$ or $\vec{n}_3,-\vec{u}_1$ or $-\vec{u}_1,\vec{n}_3$, depending on the signs of $c_{0,3},c_{0,4}$ (see Table \ref{tabletheorem}).

Next, we consider the case $\epsilon_{1,4}=-\epsilon_{1,3}$ and refer to Figure \ref{fig:case43}  as an illustrative example. Since $\vec{u}_3=-\vec{v}_1$ and $\vec{n}_3=\vec{v}_4$, these spherical edges (which bound $Q_{2,4}$) do not cross the interior of $Q_{1,3,4}$. In order to find $Q_1=Q_{1,3,4}\setminus Q_{2,4}$, we examine if and how $\vec{v}_2$ and $\vec{v}_3$ intersect the interior of $Q_{1,3,4}$.  Figure \ref{fig:case43} shows the relative positions of $\vec{v}_1,\vec{v}_4,\vec{v}_2$ determined by the connectivity of the chain and the orientations of $\vec{v}_1,\vec{v}_4$ are also given by the known orientations of $\vec{u}_3$ and $\vec{n}_3$. 

\noindent - Case $\epsilon_{1,4}=-\epsilon_{1,3}=\epsilon_{2,4}$: (This is the case where $c_{4,1}<0$ in Table \ref{tabletheorem}). This corresponds to the case where $\epsilon_{1\prime,4\prime}=\epsilon_{1\prime,3\prime}$ for the reversed walk. First of all, in this case, we notice that when $c_{4,0}>0$, then $w\prime>0$ and, similarly, when $w<0$ then $w_0\prime>0$, thus in these cases $Q_{4,2,1}=\emptyset$, giving $Q_1=Q_{1,3,4}$. Thus, the only case that might give $Q_{2,4}\cap Q_{1,3,4}\neq\emptyset$ is the case $w>0, c_{4,0}<0$, equivalently, $w>0,w_0<0$, (see Figure \ref{fig:case43}). In that case the great circle with normal vector $\vec{v}_3$ intersects the interior of $Q_{1,3,4}$ (since the face with normal vector $\vec{v}_3$ is in-between the faces with normal vectors $\vec{n}_1,\vec{n}_3$). To examine the intersection of $Q_{2,4}\cap Q_{1,3,4}$, we examine the reversed oriented polygon, $R(E_4)$ (see previous paragraph). The above conditions correspond to the case where $\epsilon_{1\prime,3\prime}=\epsilon_{1\prime,4\prime}$, $w\prime<0,w_0\prime<0$, which is the case that can give the non-trivial knotoid. Thus, using Theorem \ref{knotoid4}, we derive that for $w\prime<0$, if $c_{4\prime,1\prime}>0$, then $Q_1=Q_{1,3,4}\setminus(v_3,-v_2,n_2,n_1,n_4)$ and if $c_{4\prime,1\prime}<0$, then $Q_1=Q_{1,3,4}\setminus(v_3,-v_2,n_1,n_4)$.

\noindent - Case $\epsilon_{1,4}=-\epsilon_{1,3}=-\epsilon_{2,4}$: (This is the case where $c_{4,1}>0$ in Table \ref{tabletheorem}) As in the previous case, in order to find $Q_1=Q_{1,3,4}\setminus Q_{2,4}$, we need the area of $Q_{1,3,4}$ that is determined by the great circles $\vec{v}_2$ and $\vec{v}_3$. To find these intersections, we will examine $Q_{4,2,1}$ using the reverse walk  with $\epsilon_{1\prime,3\prime}=-\epsilon_{1\prime,4\prime}$, and we notice that in all cases, $c_{1\prime,4\prime}=(p_{4\prime,1\prime}\cdot\vec{n}_3\prime)\epsilon_{1\prime,3\prime}=(p_{0,3}\cdot(-\vec{v}_4))\epsilon_{2,4}=(p_{0,3}\cdot(-\vec{n}_3))\epsilon_{1,3}>0$. Indeed, since $\vec{n}_3$ is the normal vector to the face defined by the vertices 1,2,3, of the tetrhedral $T_{1,4}$ and points inwards if $\epsilon_{1,3}>0$ (in the direction of vertex 3) or outwards otherwise.  Thus $c_{1\prime,4\prime}>0$ in all cases. Thus, the intersection will depend on the sign of $c_{0\prime,4\prime}=(p_{4\prime,0\prime}\cdot\vec{n}_1\prime)\epsilon_{1\prime,3\prime}=(p_{0,4}\cdot(-\vec{v}_2))\epsilon_{2,4}$. This sign will depend on the sign of $c_{4,0}=(\vec{p}_{4,0}\cdot\vec{n}_1)\epsilon_{1,3}$ and the sign of $w$, which determines if $\vec{u}_2$ lies between $\vec{n}_2,\vec{n}_4$.

If $c_{4,0}<0$ then $w\prime<0$ since we can verify that the face with normal vector $\vec{u}_1$ is between the faces with normal vectors $\vec{v}_1,\vec{v}_3$, and $w\prime>0$ if $c_{4,0}>0$. If $w<0$ then $c_{0\prime,4\prime}=(p_{4\prime,0\prime}\cdot\vec{n}_1\prime)\epsilon_{1\prime,3\prime}=(p_{0,4}\cdot(-\vec{v}_2))\epsilon_{2,4}<0$ since $\vec{v}_2$ points in the opposite direction of the region that contains the vertex 0 when $\epsilon_{2,4}<0$, and $c_{0\prime,4\prime}>0$ if $w>0$.

Thus, by using Table \ref{tabletheorem} for the reversed walk we find that if $c_{4,0}<0$ and  $w<0$, then  $Q_1=Q_{1,3,4}\setminus(v_1,v_2,n_1,n_2)$.
If $c_{4,0}<0$ and $w>0$, then $Q_1=\emptyset$.
If $c_{4,0}>0$ and $w<0$, then  $Q_1=Q_{1,3,4}\setminus(v_1,v_2,v_3,n_2)$.
If $c_{4,0}>0$ and $w>0$,  then $Q_1=Q_{1,3,4}\setminus(v_1,n_4,v_3,n_2)$.

\begin{figure}
   \begin{center}
     \includegraphics[width=0.9\textwidth]{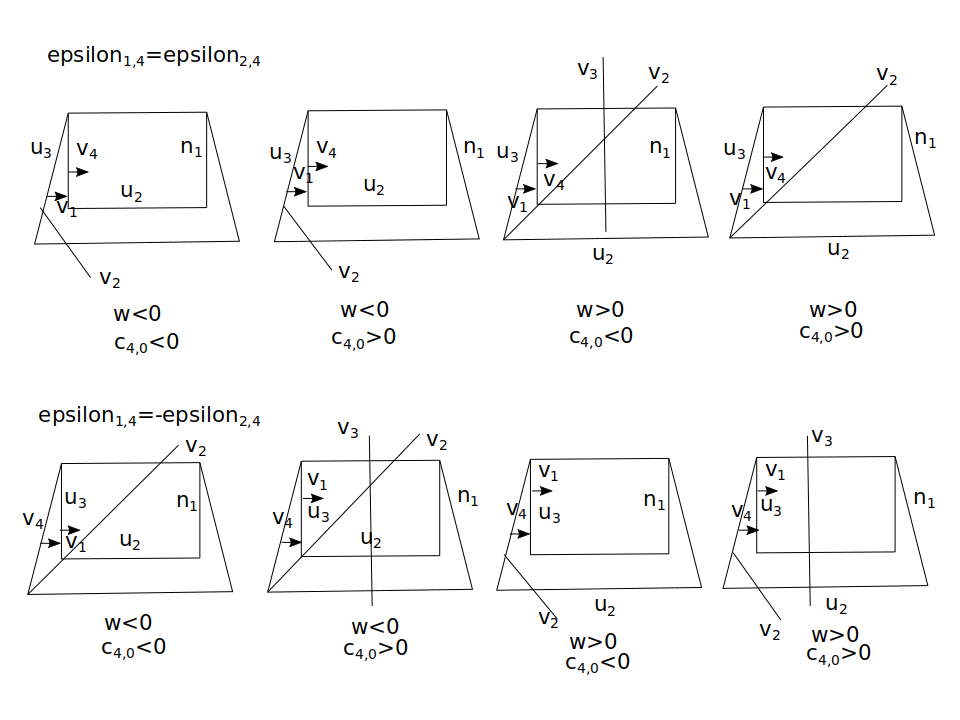}
     \caption{Representation of $Q_{1,3,4}$ when $\epsilon_{1,3}=\epsilon_{2,4}$. In this case, one great circle of the boundary of $Q_{1,3,4}$ is the one with normal vector $\vec{n}_2$ (top boundary in the figure). The lower great circle (bottom boundary) is $\vec{u}_2$ or $\vec{n}_4$, depending on whether $\epsilon_{1,4}=\epsilon_{2,4}$ or not (equivalently, depending on the sign of $c_{1,4}$). Similar considerations define the other boundaries, where  $c_{4,0}=(\vec{p}_{4,0}\cdot\vec{n}_1)\epsilon_{1,3}$, $w=(\vec{u}_2\times (-\vec{n}_2))\cdot(\vec{u}_2\times \vec{n}_4)$, $\vec{n}_3=\vec{v}_4$ and $\vec{u}_3=-\vec{v}_1$. To determine $Q_1=Q_{1,3,4}\setminus Q_{2,4}$, we examine how $\vec{v}_2$ and $\vec{v}_3$ intersect $Q_{1,3,4}$ (see proof of Theorem \ref{fullbracket}). The results are shown in Table \ref{tabletheorem}. }
     \label{fig:case43}
   \end{center}
\end{figure}

\noindent\textit{Finite form of $Q_2$}: 

For the finite form of $Q_2$ we think as follows: Let $R(E_4)$ to denote the polygonal chain $E_4$ with reversed numbering of vertices as described in the Finite form of $Q_{4,2,1}$. Then $Q_2=Q_{4,2,1}\setminus Q_{1,3}=Q_{1^{\prime},3^{\prime},4^{\prime}}\setminus Q_{2^{\prime},4^{\prime}}=Q_1^{\prime}$, which is found earlier.

\end{proof}

\section{A finite form for the Jones polynomial of an open polygonal curve with 3 and 4 edges}\label{Jonesfinite}

To find a finite form of the Jones polynomial, we first find a finite form for the normalized bracket polynomial.

Notice that the case of closed curves is reduced to the Jones polynomial of any projection of the closed chain. Thus, here we focus on the open case where the average over all projections is needed.

In the case of a polygonal chain with 3 edges, we denote $E_3$, the Jones polynomial is always trivial.
Let $E_4$ denote a polygonal chain of 4 edges. Then, by Propositions \ref{number4} and \ref{case}, the only non-trivial bracket polynomial is $k2.1$ and the writhe of the diagram is either 2 or -2. Thus the Jones polynomial of $E_4$ has the following form:

\begin{equation}
    \begin{split}
&f(E_4)= P(K((E_4)_{\xi})=k2.1)((-A)^3)^{-(\pm2)}\langle k2.1\rangle+\sum_{j=-2}^2P(K((E_4)_{\xi})=k0,wr((E_4)_{\xi})=j)\cdot1\nonumber\\
&= P(K((E_4)_{\xi})=k2.1)((-A)^3)^{-(\pm2)}\langle k2.1\rangle+(1-P(K((E_4)_{\xi})=k2.1))\nonumber\\ 
\end{split}
\end{equation}

\noindent where  $P(K((E_4)_{\xi})=k2.1)$ is  defined in Theorem \ref{knotoid4}.

\begin{corollary}
Let $E_4$ denote a polygonal chain of 4 edges, $e_1,e_2,e_3,e_4$ in 3-space, then the normalized bracket polynomial of $E_4$ is

\begin{equation}\label{jn4}
\begin{split}
&f(E_4)= P(K((E_4)_{\xi})=k2.1)((-A)^3)^{-2\epsilon_{2,4}}\langle k2.1\rangle+(1-P(K((E_4)_{\xi})=k2.1))\\
\end{split}
\end{equation}

\noindent where $P(K((E_4)_{\xi})=k2.1)$ is  defined in Theorem \ref{knotoid4}. 

\end{corollary}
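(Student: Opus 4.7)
The plan is to specialize the general decomposition formula (analogous to Eq. \ref{avnk2}) to $n=4$ by partitioning the sphere of projection directions according to the knotoid type realized, and then exploiting knotoid invariance of the normalized bracket to collapse the sum over writhes.

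First, I would invoke Proposition \ref{number4} to conclude that, for almost every $\vec{\xi}\in S^2$, the projection $(E_4)_{\vec{\xi}}$ realizes either the trivial knotoid $k0$ or the knotoid $k2.1$. Proposition \ref{case} then guarantees that if any $k2.1$ diagram appears among the projections, only one of the two diagram shapes B(i) or B(ii) can be realized as $\vec{\xi}$ varies. Tracking crossing signs through the rigid connectivity constraints (for example, $\epsilon_{1,3}=\epsilon_{1,4}$ forces $\epsilon_{2,4}=-\epsilon_{1,3}$), I will verify that the writhe on the $k2.1$ subregion is a fixed integer of absolute value $2$ whose sign is pinned down by $\epsilon_{2,4}$, consistent with the exponent $-2\epsilon_{2,4}$ in the stated formula.

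Second, I will invoke the key fact that $(-A^3)^{-wr(K)}\langle K\rangle$ is a knotoid invariant. This means that the integrand in Eq.~\ref{avnk} is piecewise constant in $\vec{\xi}$: on every projection yielding $k0$ it equals $f_{k0}=1$, and on every projection yielding $k2.1$ it equals the single knotoid invariant $f_{k2.1}=(-A^3)^{-2\epsilon_{2,4}}\langle k2.1\rangle$, expressed in terms of the standard bracket of $k2.1$ as in the paper. Crucially, this step removes the need to sum over the several possible writhes of $k0$-diagrams listed in Theorem \ref{fullbracket}: knotoid invariance collapses that entire sum to $P(k0)\cdot 1$.

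Third, pulling the two constant values out of the integral over their respective subregions and writing $P(k0)=1-P(K((E_4)_{\xi})=k2.1)$ yields the claimed formula, with the probability $P(K((E_4)_{\xi})=k2.1)$ supplied in closed form by Theorem \ref{knotoid4}. The main obstacle is the first step, namely carefully pinning down the sign of the writhe on the $k2.1$ subregion so that the exponent $-2\epsilon_{2,4}$ is consistent across both B-subcases; once that sign analysis is complete, the remainder of the argument is a direct application of knotoid invariance and routine bookkeeping.
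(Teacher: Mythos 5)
Your proposal is correct and follows essentially the same route as the paper: the paper's own justification is the short displayed computation immediately preceding the corollary, which likewise invokes Propositions \ref{number4} and \ref{case} to restrict to $k0$ and a single $k2.1$ diagram of writhe $\pm 2$, and collapses the sum $\sum_{j=-2}^{2}P(k0,wr=j)\cdot 1$ to $1-P(k2.1)$. Your explicit appeal to the invariance of $(-A^3)^{-wr(K)}\langle K\rangle$ to see that every trivial-knotoid projection contributes exactly $1$ is just a slightly more careful statement of what the paper does implicitly.
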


\noindent\textbf{Example 2: } As in Example 1, we consider the polygonal chain 

\noindent$I(t)=((0,1,0),(0,0,0),(-0.2,0.8,0.8),(0.1,0.8,-0.8),(0.1+1.2\cos(a+t),0.5,-0.8+1.2\sin(a+t)))$. 

\noindent where $a=32000\pi/100000$ and $t$ is in units of $2\pi/100000$. The chain attains a more compact configuration as time increases.
The bracket polynomials and Jones polynomials of the chain vary in time.

Figure \ref{fig:example2J} shows the Jones polynomials at different times. For comparison, the Jones polynomial of the trefoil knot (above) and of the 2-dimensional knotoid diagram $k2.1$ (below) are shown as well. Notice that a polygonal chain needs at least 6 edges to form a trefoil knot \cite{Calvo2001}. Nevertheless,  in Figure \ref{fig:example2J} above we see a small but continuous change of the polynomial closer to that of the trefoil knot. Indeed, we notice that the tight configuration that attains the open chain in 3-space would be a necessary part of the knotting pathway of the open chain to form a trefoil knot. In Figure \ref{fig:example2J} below (left) we also plot the Jones polynomial of the open chain in 3-space as a function of time and the Jones polynomial of the knotoid diagram of k2.1. We see that the Jones polynomial of the open chain tends to that of the 2-dimensional knotoid k2.1. Indeed, as the configuration tightens, it almost becomes 2-dimensional, giving in most projections the knotoid k2.1. However, it will never be exactly equal to that. In Figure \ref{fig:example2J} below (right) we plot the roots of the Jones polynomial in time and those of the trefoil knot.

Figure \ref{fig:example2K} shows the Kauffman bracket polynomial of the open 3-dimensional chain in time and that of the standard diagram of the knotoid k2.1. Again, we see the open chain tending to that of the diagram, due to the tightening of the configuration.

\begin{remark}
For the computation of the areas of the spherical polygons defined by the ordered normal vectors in Theorem \ref{fullbracket} and Table \ref{tabletheorem}, in the examples reported in this manuscript, the inverse cosine of the dot product of consecutive normal vectors were used to find the interior angles of the spherical polygons. This simple approach works only for convex spherical polygons (which is the case in this example). 
\end{remark}

\begin{figure}[H]
   \begin{center}
   \includegraphics[width=0.8\textwidth]{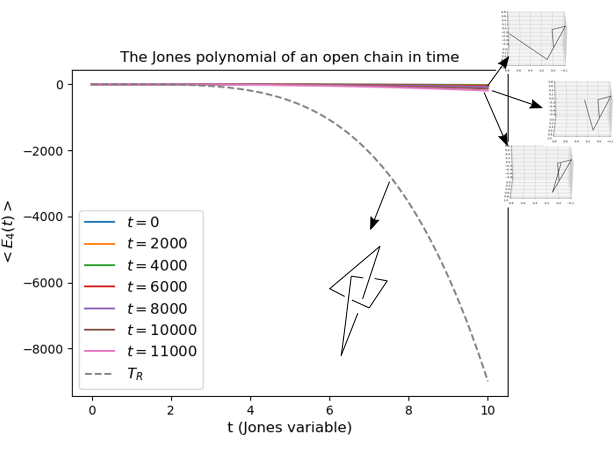}\\
     \includegraphics[width=0.5\textwidth]{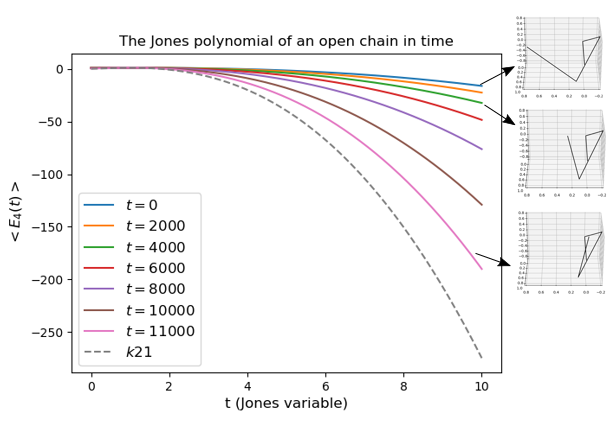}\includegraphics[width=0.5\textwidth]{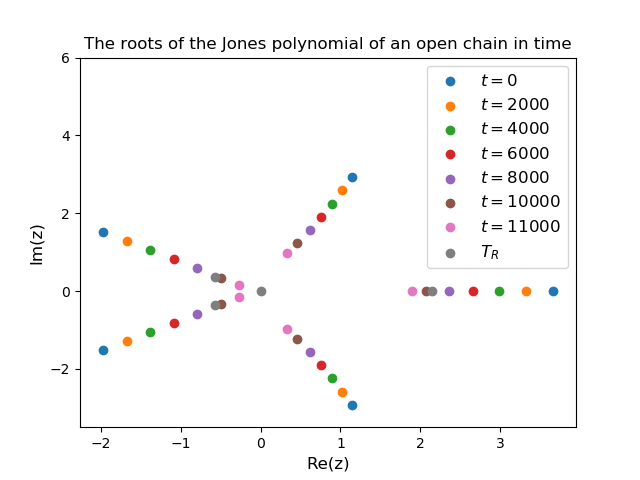}
     \caption{The Jones polynomial of the chain in 3-space as it deforms in time to tighten a compact configuration. Top: The dotted curve shows the Jones polynomial of the trefoil knot (we denote $T_R$). Even though a chain with 4 edges cannot form the trefoil knot \cite{Calvo2001}, we see that the polynomial of the open chain tends to that of the trefoil knot, as this part of the configuration would be a part of the knotting pathway towards a trefoil knot. Bottom: (Left) The dotted curve shows the Jones polynomial of the knotoid $k2.1$ (a 2-dimensional diagram). We see that the chain tightens to a configuration that in most projections will give the knotoid k2.1, which explains why the polynomials tend to that of k2.1. (Right) The roots of the Jones polynomial of the open chain in 3-space as a function of time and the roots of the trefoil polynomial. }
     \label{fig:example2J}
   \end{center}
\end{figure}

\begin{figure}[H]
   \begin{center}
     \includegraphics[width=0.6\textwidth]{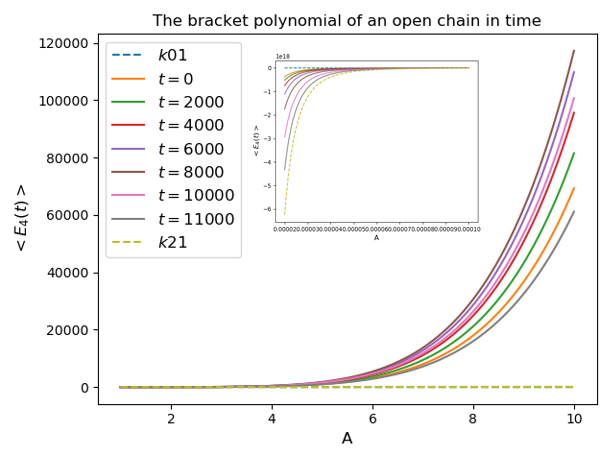}
     \caption{The Kauffman bracket polynomial of an open polygonal chain as it moves in time. The inset plot shows the polynomial for values of the parameter $A$ less than 1.}
     \label{fig:example2K}
   \end{center}
\end{figure}

\section{Conclusions} In this work we defined the Kauffman bracket polynomial and the Jones polynomial in a way that is applicable to both open and closed curves in 3-space. We showed that for open chains these are continuous functions in the space of configurations. In doing this, we introduced a new method of measuring complexity of open chains, that combines the fundamental concepts of the Gauss linking integral and the theory of knotoids. This approach opens a new direction of research in applied knot theory where more of the machinery of knot and link polynomials can be rigorously applied to open chains for the first time. 

Moreover, we showed how these functions of complexity obtain a finite form for polygonal chains. We derived specific finite formulas for the computation of the Kauffman and Jones polynomials in the basic case of a polygonal chain of 4 edges. This study lays the foundation for the derivation of a finite form for a larger number of edges. We stress that the number of edges that are relevant in applications, such as polymers, may not be equal to the exact number of covalent bonds, but rather equal to the number of Kuhn segments, or even less than that, equal to the number of entanglement strands in a primitive path \cite{Tzoumanekas2006,Panagiotou2013b,Panagiotou2011}, for which, even less than 10 edges are relevant. Similarly, proteins may be represented by their sequence of secondary structure elements as building blocks, for which less than 10 edges may also be relevant \cite{Panagiotou2020}.

Even for this small number of edges, our numerical results show that the polynomials are sensitive to the motion of the polygonal chain and indicative of the transition to more compact conformations.  For a larger number of edges these measures will directly reflect the entanglement of the open chain and how knotting occurs.  We stress that these tools can also be applied to collections of open and closed chains and we expect them to have impactful applications.  They allow to be included in formulations of mechanical models of elastic coils \cite{Charles2019,Patil2020}. Also, they allow to accurately described knotting pathways in proteins for the first time \cite{ODonnol2018}. As well as in theories that derive important quantities in polymer physics, such as the entanglement length \cite{Qin2011,Panagiotou2013b,Panagiotou2014}.

\section{Acknowledgments} Eleni Panagiotou was supported by NSF (Grant No. DMS-1913180). Louis Kauffman was supported by the 
Laboratory of Topology and Dynamics, 
Novosibirsk State University 
(contract no. 14.Y26.31.0025 
with the Ministry of Education and Science 
of the Russian Federation.)

\bibliographystyle{plain}

\bibliography{main}

\end{document}